\documentclass{amsart}
\usepackage{eucal}
\usepackage{amsmath}
\usepackage{amsthm,amssymb,times,stmaryrd,color,enumerate,accents,mathtools}
\usepackage{ marvosym }

\usepackage[usenames,dvipsnames]{xcolor}
\usepackage[shortlabels]{enumitem}
\usepackage{url}
\usepackage{inconsolata} 
\definecolor{sangria}{rgb}{0.57, 0.0, 0.04}
\definecolor{royalblue}{rgb}{0.0, 0.22, 0.66}
\usepackage[colorlinks=true, urlcolor=sangria ,hyperindex, linkcolor=royalblue, pagebackref=false, citecolor=sangria]{hyperref}
\usepackage{physics} 
\usepackage{bm}
\usepackage[bb=ams, cal=cm, scr=rsfso]{mathalfa}
\usepackage{graphicx}
\usepackage{tikz-cd}  
\tikzset{
  symbol/.style={
    draw=none,
    every to/.append style={
      edge node={node [sloped, allow upside down, auto=false]{$#1$}}}
  }
}
\usetikzlibrary{calc}

\usepackage{marginnote}
\usepackage[textcolor=blue,
    linecolor=blue!10!white,
    bordercolor=blue!10!white,
    backgroundcolor=white, ]{todonotes}
\usepackage{fullpage}
\usepackage[all]{xy}
\theoremstyle{plain}

\newtheorem{thm}[equation]{Theorem}
\newtheorem{prop}[equation]{Proposition}
\newtheorem{lem}[equation]{Lemma}

\newtheorem{cor}[equation]{Corollary}

\newtheorem{conj}[equation]{Conjecture}
\theoremstyle{definition}
\newtheorem{rmk}[equation]{Remark}

\newtheorem{example}[equation]{Example}
\newtheorem{defn}[equation]{Definition}

\newtheorem{setup}[equation]{Setup}

\numberwithin{equation}{section}
\numberwithin{figure}{section}

\usepackage{xfrac}

\newcommand{\CB}[1]{\left\{#1\right\}}

\newcommand{\DP}[1]{(\!(#1)\!)}
\newcommand{\DB}[1]{\llbracket#1\rrbracket}
\newcommand{\RG}[1]{\langle#1\rangle}
\newcommand{\pma}[1]{{\begin{pmatrix}#1\end{pmatrix}}}

\renewcommand{\mod}{\bmod}
\newcommand{\ra}{\rightarrow}
\newcommand{\xra}[1]{\xrightarrow{#1}}

\newcommand{\mono}{\hookrightarrow}
\newcommand{\epi}{\twoheadrightarrow}

\newcommand{\risom}{\buildrel\sim\over\rightarrow} 
\newcommand{\ov}{\overline}

\newcommand{\til}{\widetilde}
\newcommand{\tld}{\widetilde}
\newcommand{\floor}[1]{\lfloor #1 \rfloor}

\newcommand{\GL}{\mathrm{GL}}

\newcommand{\Fl}{\mathrm{Fl}} 

\newcommand{\tilW}{\til{W}}

\newcommand{\tilw}{\til{w}} 
\newcommand{\tilz}{\til{z}} 
 
\DeclareMathOperator{\Spec}{\mathrm{Spec}}

\DeclareMathOperator{\Spf}{\mathrm{Spf}}

\DeclareMathOperator{\Hom}{\mathrm{Hom}}

\DeclareMathOperator{\End}{\mathrm{End}}

\DeclareMathOperator{\Ind}{\mathrm{Ind}}

\DeclareMathOperator{\cind}{c\mathrm{\dash Ind}}
\DeclareMathOperator{\Lie}{\mathrm{Lie}}

\DeclareMathOperator{\Gal}{\mathrm{Gal}}

\DeclareMathOperator{\Ext}{\mathrm{Ext}}

\newcommand{\rhobar}{\overline{\rho}}
\newcommand{\rbar}{\overline{r}}
\newcommand{\mo}{{-1}}

\renewcommand{\ss}{{\mathrm{ss}}}

\newcommand{\gr}{\mathrm{gr}}
\newcommand{\Gr}{\mathrm{Gr}}

\newcommand{\et}{\normalfont{\text{{\'et}}}}

\newcommand{\JH}{\mathrm{JH}}

\newcommand{\cyc}{\mathrm{cyc}}

\newcommand{\ord}{\mathrm{ord}}

\newcommand{\red}{{\mathrm{red}}}
\newcommand{\reg}{{\mathrm{reg}}}

\newcommand{\nbl}{\nabla}
\newcommand{\bss}{{\backslash}}

\newcommand{\Adm}{{\mathrm{Adm}}}
\newcommand{\loccit}{\emph{loc.~cit.}}

\newcommand{\dash}{\text{-}}

\newcommand{\ix}[1]{^{(#1)}}

\newcommand{\pgma}{(\varphi,\Gamma)}

\newcommand{\pcp}{{\wedge_p}} 

\newcommand{\jj}{{j\in \cJ}}

\newcommand{\nblz}{{\nabla_0}}

\newcommand{\osig}{\overline{\sigma}}

\newcommand{\nbla}{\nabla_{\bfa}}
\newcommand{\nblat}{\nabla_{\bfa_\tau}}

\renewcommand{\ev}{\mathrm{ev}}
\renewcommand{\det}{\mathrm{det}}

\newcommand{\Ad}{\mathrm{Ad}}

\newcommand{\tils}{\Tilde{s}}

\newcommand{\dia}{\diamond}

\renewcommand{\SS}{\mathrm{SS}}
\newcommand{\KZ}{\mathrm{KZ}}

\newcommand{\mc}{\mathcal}
\newcommand{\mf}{\mathfrak}
\newcommand{\mbf}{\mathbf}

\newcommand{\Q}{\mathbf{Q}}

\newcommand{\Qp}{\mathbf{Q}_p}
\newcommand{\Qpbar}{\overline{\mathbf{Q}}_p}

\newcommand{\Z}{\mathbf{Z}}

\newcommand{\R}{\mathbf{R}}

\newcommand{\F}{\mathbf{F}}

\newcommand{\Fp}{\mathbf{F}_p}
\newcommand{\Fpbar}{\overline{\mathbf{F}}_p}

\newcommand{\G}{\mathbf{G}}

\newcommand{\T}{\mathbb{T}}
\newcommand{\A}{\mathbb{A}}
\renewcommand{\P}{\mathbb{P}}

\newcommand{\fm}{{\mf{m}}}

\newcommand{\fD}{{\mf{D}}}

\newcommand{\fM}{{\mf{M}}}

\newcommand{\cA}{{\mc{A}}}

\newcommand{\cC}{{\mc{C}}}

\newcommand{\cG}{{\mc{G}}}
\newcommand{\cH}{{\mc{H}}}
\newcommand{\cI}{{\mc{I}}}
\newcommand{\cJ}{{\mc{J}}}

\newcommand{\cO}{{\mc{O}}}
\newcommand{\cP}{{\mc{P}}}

\newcommand{\cX}{{\mc{X}}}

\newcommand{\bfa}{{\mbf{a}}}

\newcommand{\rmG}{{\mathrm{G}}}

\newcommand{\rmK}{{\mathrm{K}}}

\newcommand{\rmZ}{{\mathrm{Z}}}

\newcommand{\al}{\alpha}
\newcommand{\be}{\beta}

\newcommand{\Del}{\Delta}
\newcommand{\del}{\delta}

\newcommand{\veps}{\varepsilon}

\newcommand{\ka}{\kappa}
\newcommand{\Lam}{\Lambda}
\newcommand{\lam}{\lambda}
\newcommand{\sig}{\sigma}

\newcommand{\om}{\omega}
\newcommand{\Om}{\Omega}

\newcommand{\oom}{\overline{\omega}}

\title{Non-admissibility of some universal supersingular representations }

\author{Zachary Feng}
\address{University of Oxford, Mathematical Institute, Woodstock Road, Oxford OX2 6GG, UK}
\email{feng@maths.ox.ac.uk}

\author{Heejong Lee}
\address{Korea Institute for Advanced Study, 85 Hoegi-ro, Dongdaemun-gu, Seoul 02455, Republic of Korea}
\email{heejonglee@kias.re.kr}

\author{Ray Li}
\address{Department of Mathematics, University of Chicago, Chicago, Illinois 60637, USA}
\email{rayli@uchicago.edu}

\author{Vaughan McDonald}
\address{Stanford University, Building 380, Stanford, California 94305, USA}
\email{vkm@stanford.edu}

\author{Nischay Reddy}
\address{Department of Mathematics, University of Toronto, Toronto, Ontario M5S 2E4, Canada}
\email{nischay.reddy@mail.utoronto.ca}

\begin{document}

\maketitle

\begin{abstract}
    Let $K/\Qp$ be an unramified extension of degree $f$ with residue field $k$. Let $\sig$ be an irreducible representation of $\GL_n(k)$ over $\Fpbar$. For $n\ge 3$, we prove that the universal supersingular representation of weight $\sig$ is non-admissible and of infinite length when $\sig$ is sufficiently generic and satisfies certain technical conditions. This generalizes the previous results for $n=2$ and a non-trivial finite extension $K/\Qp$. 
    Our method employs a weight cycling argument together with recent progress on the Serre weight conjectures.
\end{abstract}

\section{Introduction}
\subsection{Background}

The mod $p$ local Langlands correspondence for $\GL_2(\Qp)$ gives a tight connection between 2-dimensional Galois representations of $\Gal(\ov{\Q}_p/\Qp)$ over $\Fpbar$ and admissible smooth representations of $\GL_2(\Qp)$ over $\Fpbar$ (see \cite{Col,Pas}). It is desirable to extend such a correspondence to $\GL_n(K)$ where $K/\Qp$ is a finite extension. When either $n\ge 3$ or $K\neq \Qp$, mod $p$ representations of $\GL_n(K)$ exhibit surprisingly complicated behaviors compared to the case of $\GL_2(\Qp)$. To explain such behaviors, we introduce the notion of \textit{supersingular} representations.

Let $\rmG:=\GL_n(K)$, $\rmK:=\GL_n(\cO_K)$, and $\rmZ$ be the center of $\rmG$. For an irreducible $\Fpbar$-representation $\sig$ of $\KZ$, we consider the \textit{universal supersingular representation of weight $\sig$}
\begin{align*}
    \SS(\sig):= \cind_{\KZ}^{\rmG}\sig/(T_1,\dots,T_{n-1})
\end{align*}
where $T_i$ is the Hecke operator of $\sig$ associated to the $i$th fundamental cocharacter. Then a \textit{supersingular representation} of $\rmG$ is equivalent to an irreducible admissible quotient of $\SS(\sig)$ for some $\sig$\footnote{Note that irreducible subquotients of $\SS(\sig)$ are not necessarily supersingular. In fact, the proof of our main result shows that there are infinitely many non-isomorphic non-supersingular irreducible subquotients of $\SS(\sig)$.}. \cite{AHHV} classified irreducible admissible mod $p$ representations of $p$-adic reductive groups in terms of supersingular representations, analogous to the case of complex representations. However, the classification of supersingular representations remains mysterious.

Let $n=2$. For $K=\Qp$, \cite{Breuil-SS} showed that $\SS(\sig)$ is already irreducible and admissible and thus classified supersingular representations of $\GL_2(\Qp)$. In stark contrast, when $K/\Qp$ is a non-trivial unramified extension, \cite{BP} showed that $\SS(\sig)$ admits infinitely many non-isomorphic irreducible quotients under a mild assumption on $\sig$. \cite{Schein, Hendel} showed that $\SS(\sig)$ is non-admissible if and only if $K\neq \Qp$.   Moreover, \cite{Schraen-SS, Wu-SS} showed that any irreducible quotient $\pi$ of $\SS(\sig)$ is \textit{not} of finite presentation when $n=2$ and $K\neq \Qp$, meaning that the kernel of $\SS(\sig)\epi \pi$ is not finitely generated as a $\Fpbar[\rmG]$-module. They also proved the non-admissibility of $\SS(\sig)$. In a different direction, \cite{Le-central-char} showed there exists an irreducible smooth mod $p$ representation of $\GL_n(K)$ that is non-admissible and does not have a central character for $n\ge 2$ and $K$ whose residue field is a non-trivial extension of $\Fp$ (see also, \cite{Le-non-adm,GS-non-adm,GLS-non-adm}).

\subsection{Result and method}
From now on, we assume that $K/\Qp$ is unramified of degree $f\ge 1$ with residue field $k$.

\begin{thm}[Theorem \ref{thm:main}]
    The universal supersingular representation $\SS(\sig)$ is non-admissible and of infinite length for $n\ge 3$ and sufficiently deep $\sig$ whose $p$-restricted highest weight is contained in a special $p$-restricted $p$-alcove.  
\end{thm}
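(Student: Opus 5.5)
The strategy is a weight cycling argument in the spirit of Breuil--Paškūnas and Schraen/Wu, combined with Serre weight results for $n$-dimensional Galois representations. We produce infinitely many pairwise non-isomorphic admissible irreducible (non-supersingular) subquotients of $\SS(\sig)$, or more directly an infinite strictly increasing chain of subrepresentations. Either of these gives infinite length. To get non-admissibility, we show that $\SS(\sig)^{K_1}$ contains infinitely many copies of some fixed weight, or that its $\rmK$-socle is infinite.

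The first step is local and combinatorial. For $\sig$ whose $p$-restricted highest weight lies in a special $p$-restricted alcove, we identify a second weight $\sig'$ with three properties:
\begin{itemize}
\item $\sig'$ occurs in $\cind_{\rmK\rmZ}^{\rmG}\sig$ through an explicit element of the Iwahori--Hecke or Bruhat--Tits action, for instance via $\Pi$ or the element $\begin{pmatrix}0&1\\p&0\end{pmatrix}$ embedded in a $\GL_2$-Levi.
\item Its image in $\SS(\sig)$ is nonzero.
\item The Hecke operators $T_i$ for $\sig'$ act on it in a controlled way, namely they vanish, or they are compatible with the $T_i$ of $\sig$ via a Satake-type intertwining.
\end{itemize}
Genericity ("sufficiently deep") will be used so that the relevant Jacquet modules and the $I_1$-invariants of $\sig$ are multiplicity free. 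Special alcoves will be used so that the cycling closes up: iterating the operation $\sig\mapsto\sig'$ returns to $\sig$ twisted by a nontrivial character, or shifted in a direction that never closes. This produces an infinite family of weights $\sig_m$ and maps $\cind\sig_m/(T)\to\SS(\sig)$.

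The second step is global, and it supplies the non-vanishing input that is otherwise hard to verify by hand. Choose a Galois representation $\rbar:G_K\to\GL_n(\Fpbar)$, generic and semisimple or suitably chosen, whose Serre weight set $W(\rbar)$ contains $\sig$ and the cycled weights. This is possible by the explicit description of $W(\rbar)$ for generic $\rbar$ (Herzig's conjecture, now known in the relevant generality by work of Le--Le Hung--Levin--Morra and Bellovin et al.). Globalize $\rbar$ and use the resulting mod $p$ automorphic representation $\pi$ of $\GL_n(K)$ arising from completed cohomology. Modularity of each weight gives nonzero maps $\cind\sig_m\to\pi$, and the Hecke eigenvalues of these maps are determined by $\rbar$. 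For supersingular-type $\rbar$ these eigenvalues vanish, so the maps factor through $\SS(\sig_m)$. This is used to show the cycled weights really survive in $\SS(\sig)$, and that different cycling steps give non-isomorphic constituents. Their $\rmK$-socles contain distinct weights, or else infinitely many copies of one weight inside $\SS(\sig)^{K_1}$.

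Finally, we assemble the argument. The infinitely many independent copies of a weight in $\SS(\sig)^{K_1}$ contradict admissibility. The distinct subquotients, for instance principal series quotients obtained by restriction along the cycling, give infinite length.

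The main obstacle is the first step: showing that the cycled vectors are nonzero in $\SS(\sig)$ and independent modulo the image of the Hecke operators $T_1,\dots,T_{n-1}$. This requires an explicit description of $(\cind\sig)^{I_1}$ and of how the $T_i$ act on it. I would first try to reduce to a $\GL_2$-Levi, using the special-alcove hypothesis to make the relevant parabolic induction a $\GL_2$-type computation, and then invoke the known $n=2$ non-admissibility mechanism.
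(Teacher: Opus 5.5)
Your outline has the right broad ingredients, namely weight cycling plus globalization, but it is missing the two ideas that make the argument work. The mechanism you propose for getting infinitely many constituents also cannot work.

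\textbf{The source of infinitude.} There are only finitely many Serre weights, so no iteration $\sig\mapsto\sig'\mapsto\cdots$ can produce infinitely many non-isomorphic weights $\sig_m$. You also give no way to produce infinitely many copies of one weight in $\SS(\sig)^{K_1}$. In the paper the infinitude comes from varying Hecke eigenvalues at a \emph{fixed} weight:
\begin{itemize}
\item The Emerton--Gee--Herzig factorization of $T_i$ through $\cind_{\rmK}^{\rmG}\Ind_{P_\mu(k)}^{G(k)}\sig^{N_{-\mu}(k)}$ gives a surjection $\cind_{\rmK}^{\rmG}C_i(\sig)\epi\SS_t(\sig)$.
\item One fixes a single constituent $\sig'$ of $C_i(\sig)$ and lets $\pi$ be the subquotient of $\SS_t(\sig)$ generated by the image of $\cind_{\rmK}^{\rmG}\sig'$. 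This $\pi$ is independent of any Galois choice.
\item One shows that $\pi$ surjects onto the irreducible principal series $\cind_{\rmK}^{\rmG}\sig'\otimes_{\cH(\sig')}\chi_{\sig'}(\rhobar_0)$ for infinitely many $\rhobar_0$ with distinct $\chi_{\sig'}(\rhobar_0)$.
\item Non-admissibility then follows from the lemma that infinitely many non-isomorphic irreducible quotients force non-admissibility (via Pontryagin duality and Noetherianity of $\F\DB{\rmK}$), together with passage to subquotients.
\end{itemize}
For this, $\rhobar_0$ must be supersingular with respect to $\sig$, so that the map $\cind\sig\to\pi(\rbar)$ factors through $\SS(\sig)$. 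It must also be \emph{ordinary} with respect to $\sig'$. You place supersingularity on the cycled weights instead. That would only produce quotients on which all the $T_i$ vanish, leaving nothing to tell them apart.

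\textbf{Which constituent is detected.} You need to know \emph{which} constituent of $C_i(\sig)$ the nonzero map $C_i(\sig)\to\pi(\rbar)|_{\rmK}$ detects. The paper arranges this as follows:
\begin{itemize}
\item $\JH(C_i(\sig))\subseteq\JH(\osig(\tau))$ and $\sig\notin\JH(C_i(\sig))$.
\item $\sig$ and $\sig'$ are the only modular weights of $\rhobar_0$ in $\JH(\osig(\tau))$. This is proved by patching for a non-semisimple $\rhobar_0$.
\end{itemize}
A generic semisimple $\rhobar$, as you propose, has a large weight set, and this pinning-down fails.

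The existence of $\rhobar_0$ with all these properties is the paper's main technical result. It is the non-emptiness of $\cC^\tau_\sig(\tilz)\cap\cC^\tau_{\sig'}(\tilz)\cap\cC^{\tau'}_{\sig'}(\tilz')$ in the Emerton--Gee stack, proved using explicit colength-one local models. The special-alcove hypothesis enters exactly here: it forces points of this intersection to be supersingular for $\sig$. For contrast, when $n=3$, $f=1$ and $\sig$ lies in the base alcove, each constituent $\sig'$ of $C_i(\sig)$ has either $\cC_\sig\cap\cC_{\sig'}=\emptyset$ or $\cC_\sig^{\ss}\cap(\cC_{\sig'}\setminus\cC_{\sig'}^{\ss})=\emptyset$. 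So no suitable $\rhobar_0$ exists there.

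The role you assign to specialness, ``closing the cycle'', is not the one needed. Your proposed main obstacle, computing $(\cind\sig)^{I_1}$ explicitly and reducing to a $\GL_2$-Levi, is precisely the computation the Galois-side argument is designed to avoid. It is also unlikely to succeed as stated: a single $\GL_2$-Levi cannot control all of $T_1,\dots,T_{n-1}$, and the $n=2$ mechanism relies on diagrams, which are not available for $n\ge 3$.
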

This generalizes the previous non-admissibility results for $n=2$. The assumption on $K$ is required to apply the main result in \cite{LLMPQ-CL}. Our deepness assumption on $\sig$ requires $p$ to be sufficiently large. See \S\ref{subsec:sp-alc} for the definition of special $p$-restricted $p$-alcoves. The proportion of special ones among all $p$-restricted $p$-alcoves is $1-\left(\frac{n-2}{n-1}\right)^f$. See Remark \ref{rmk:non-special} for a discussion why the given problem is more difficult without this condition.

The methods in previous works are restricted to $\GL_2$. \cite{BP} extensively used the theory of diagrams (first introduced in \cite{Pas-diagram}) which is currently not available for groups other than $\GL_2$. \cite{Schein, Hendel} proved the non-admissibility of $\SS(\sig)$ by explicitly computing the pro-$p$ Iwahori invariant subspace of $\SS(\sig)$ for $\GL_2$ (for $K/\Qp$ unramified, this was first observed by Breuil; see \cite[Remarque 4.2.6]{Breuil-SS}). \cite{Schraen-SS, Wu-SS} used a similar argument and a result of Hu \cite{Hu-canonical-diag} relying on diagrams. \cite{GLS-non-adm, Le-central-char}, which studied different problems, also used the theory of diagrams to prove the result for $n=2$ and extended it to general $n$ using parabolic induction. This is why the residue field is assumed to be a non-trivial extension of $\Fp$ in \loccit

Our method is distinct from the previous ones in the sense that:
\begin{enumerate}
    \item we employ a weight cycling argument which applies to any split reductive group over any finite extension $K/\Qp$, and
    \item we make use of the moduli space of mod $p$ local Galois representations (the reduced part of the Emerton--Gee stack \cite{EGstack}) and the connection between mod $p$ local Galois representations and mod $p$ representations of $p$-adic groups given by globalization and the space of mod $p$ algebraic automorphic forms.
\end{enumerate}
Even though the proof of Theorem \ref{thm:main} works only when $n\ge 3$, a similar argument can prove the non-admissibility and the infinite length property of $\SS(\sig)$ for $n=2$ and $f\ge 2$. This is explained in Remark \ref{rmk:GL2}, which gives a concise illustration of our method. We view our method as a showcase for how Galois representations can guide the mod $p$ representation theory of $p$-adic groups. 

We explain our method in more detail. The weight cycling argument was first discovered by Buzzard for $\GL_{2}(\Qp)$. We use its most general form due to \cite{EGHweightcyc}. In fact, we reinterpret their result in the following form (Proposition \ref{prop:WC}): for $\sig$ an irreducible representation of $\KZ$ and $1 \le i \le n-1$, there exists a finite length smooth representation $C_i(\sig)$ of $\KZ$, which is defined as a parabolic induction of an irreducible $\Fpbar$-representation of a Levi subgroup in the finite group $\GL_n(k)$, with a surjective morphism
\begin{align*}
    \cyc_i: \cind_\KZ^{\rmG} C_i(\sig) \epi \cind_{\KZ}^\rmG \sig/(T_i).
\end{align*}
In particular, $\cind_\KZ^{\rmG} C_i(\sig)$ surjects onto $\SS(\sig)$. Let $\sig'$ be a Jordan--H\"older factor for $C_i(\sig)$. We want to show that the subquotient of $\SS(\sig)$ given by $\cind_{\KZ}^\rmG \sig'$ and $\cyc_i$ is non-admissible and of infinite length. Then this implies the same properties of $\SS(\sig)$. There are two difficulties. Firstly, we do not have a complete description of the set of Jordan--H\"older factors $\JH(C_i(\sig))$. Secondly, even if we know that some explicit $\sig'$ is in $\JH(C_i(\sig))$, understanding its image under $\cyc_i$ seems very hard. Due to these difficulties, applications of the weight cycling argument have been limited to $\GL_n$ for $n\le 3$.

This is where Galois representations come in. Let $G_K:=\Gal(\ov{K}/K)$ and $\rhobar: G_K \ra \GL_n(\Fpbar)$ be a continuous representation. Using a globalization of $\rhobar$, we can construct an admissible smooth $\Fpbar$-representation $\pi$ of $\rmG$ attached to $\rhobar$, and $\rhobar \mapsto \pi$ is a candidate for the mod $p$ local Langlands correspondence. By carefully choosing $\rhobar$, we can assume that $\sig$ is a modular weight of $\rhobar$. Furthermore, using the main result of \cite{lee-satake}, which reinterprets Hecke operators for $\sig$ in terms of Galois representations, we can take $\rhobar$ to be \textit{supersingular with respect to $\sig$} (Definition \ref{def:ss-rhobar}). By the mod $p$ local-global compatibility result in \cite{lee-satake}, we obtain a commutative diagram
\[
\begin{tikzcd}
    \cind_\KZ^{\rmG} C_i(\sig) \arrow[r, two heads] \arrow[rd, dashed] & \SS(\sig) \arrow[d, dashed] & \cind_{\KZ}^\rmG \sig \arrow[l, two heads] \arrow[ld] \\ 
    & \pi.
\end{tikzcd}
\]
Here, the right diagonal arrow exists because $\sig$ is a modular weight of $\rhobar$. It factors through $\SS(\sig)$ and induces the vertical dashed arrow by the supersingularity of $\rhobar$ with respect to $\sig$, which in turn induces the diagonal dashed arrow. In order to understand (part of) the left horizontal arrow, we study the left diagonal arrow. By Frobenius reciprocity, we get a non-zero morphism $C_i(\sig)\ra \pi|_{\KZ}$. In particular, there is at least one $\sig' \in \JH(C_i(\sig))$ that appears in the $\KZ$-socle of $\pi|_{\KZ}$. This is why this method is called weight cycling. 

In general, it is difficult to determine which $\sig'$ should appear in the $\KZ$-socle of $\pi$. We resolve this by choosing $\rhobar$ such that there is a unique $\sig' \in \JH(C_i(\sig))$ modular for $\rhobar$. Then this induces a non-zero morphism from $\cind_\KZ^\rmG \sig'$ to $\pi$.  
Let $\pi'$ be the subquotient of $\SS(\sig)$ given by $\cind_\KZ^\rmG \sig'$. In addition to above conditions on $\rhobar$, we require $\rhobar$ to be non-supersingular with respect to $\sig'$. In fact, we choose $\rhobar$ ordinary with respect to $\sig'$, i.e.~extension of characters whose restriction to the inertia subgroup is explicitly determined by $\sig'$. This implies that the image of $\cind_\KZ^\rmG \sig'$ in $\pi$, which factors through $\pi'$, is given by a principal series representation. By twisting the diagonal characters of $\rhobar$ by infinitely many unramified characters, we get infinitely many non-isomorphic principal series of fixed weight $\sig'$ as quotients of $\pi'$.  
This implies the desired non-admissibility and infinite length of (a subquotient of) $\SS(\sig)$ (see Lemma \ref{lem:non-adm}).

What remains is to find (a family of) $\rhobar$ satisfying the above conditions. Let $\cX_{n,\red}$ be the reduced Emerton--Gee stack constructed in \cite{EGstack}. It can be viewed as the moduli space of $n$-dimensional mod $p$ representations of $G_K$. It is known that there is a bijection between irreducible representations of $\GL_n(k)$ and irreducible components in $\cX_{n,\red}$. \cite{LLMPQ-CL} studied explicit local charts of certain subspaces in $\cX_{n,\red}$. The regular colength one local charts in \loccit~have two irreducible components corresponding to $\sig$ and $\sig'$. Then $\rhobar$ in the intersection of the two components has $\sig$ and $\sig'$ as its modular weights (Proposition \ref{prop:SWC}). For the ordinarity of $\rhobar$ with respect to $\sig'$, we further intersect with the ordinary locus in the component corresponding to $\sig'$. Showing the non-emptiness of this triple intersection is the most technical part of our work (Proposition \ref{prop:int}). Using local model theory developed in \cite{LLLMlocalmodel}, we reduce this to computing intersections of certain translated affine Schubert cells. Then the specialness of the $p$-alcove ensures that our $\rhobar$ is supersingular with respect to $\sig$. Our result also shows that the supersingular locus in the irreducible component for $\sig$ is of codimension one when the ($p$-restricted) highest weight of $\sig$ belongs to a special $p$-alcove (Corollary \ref{cor:ss}). This exhibits a striking contrast compared to the lowest alcove case.

\subsection{Acknowledgments} This project was initiated at the 2025 Arizona Winter School: Representation theory of $p$-adic groups, as part of Florian Herzig's project groups. We thank the organizers for coordinating the event and for providing a great working environment. We are especially grateful to Florian Herzig for suggesting this problem and for his guidance. We also thank Madison Delmoe for her participation in the project during the winter school. We benefited from many helpful discussions with Daniel Le and are pleased to acknowledge his contribution. In particular, the reinterpretation of weight cycling and Lemma \ref{lem:non-adm}, which is crucial for the non-admissibility result, grew out of the discussion between H.L.~and Daniel Le.

Z.F. acknowledges the support of the Natural Sciences and Engineering Research Council of Canada (NSERC), [ref. no. 577979-2023]. H.L.~was supported by the AMS--Simons Travel Grant and by the KIAS Individual Grant (HP103001) at the Korea Institute for Advanced Study. He also thanks Purdue University for its hospitality during his visit in Fall 2025. During part of this work, V.M. was supported by NSF Graduate Research Fellowship grant DGE-1656518.

\subsection{Notation}
For a field $F$, we fix its separable closure $\ov{F}$ and define $G_F := \Gal(\ov{F}/F)$. Let $K/\Qp$ be the unramified extension of degree $f$ with ring of integers $\cO_K$ and residue field $k$. Let $E\subset \Qpbar$ be a sufficiently large finite extension with ring of integers $\cO$, uniformizer $\varpi\in \cO$, and residue field $\F$. Let $\varphi$ be the arithmetic absolute Frobenius automorphism on $K$. We fix an embedding $\sig_0 : K \mono E$ and write $\sig_j:= \sig_0\circ \varphi^{-j}$. Let $\cJ$ be the set of embeddings of $K$ into $E$. We identify $\Z/f\Z \simeq \cJ$ by $j\mapsto \sig_j$. We choose $\pi\in \ov{K}$ such that $\pi^{p^f-1}=-p$. Let $\om_K:G_K \ra \cO_K^\times$ be Serre's fundamental character given by $g(\pi)=\om_K(g)\pi$ for $g\in G_K$. We write $\om_{K,\sig_j}:= \sig_j\circ \om_K$ and $\oom_{K,\sig_j} = \om_{K,\sig_j} \mod \varpi$. We normalize Hodge--Tate weights so that the $p$-adic cyclotomic character $\veps$ has Hodge--Tate weights 1 at every embedding.

Let $G$ be a split connected reductive group over $\cO_K$. Except in \S\ref{sec:rep}, we will take $G=\GL_n$. We assume throughout that $G$ has simply connected
derived subgroup. Let $T\subset B \subset G$ denote a maximal torus and a Borel subgroup and $U$ be the unipotent radical of $B$. We write $\ov{B}=T\ov{U}$ for the opposite Borel subgroup. Following the standard notations, we denote by $X^*(T)$, $X_*(T)$, $\Del$, $\Del^\vee$, $\Phi$, $\Phi^+$, $\Phi^-$, $\Lam\subset X^*(T)$ for the groups of characters, cocharacters, the sets of fundamental roots, fundamental coroots, roots, positive roots, negative roots, and the root lattice generated by $\Delta$, respectively. We fix $\eta\in X^*(T)$ satisfying $\RG{\eta,\al^\vee}=1$ for all $\al\in \Delta$. When $G=\GL_n$, we take $\eta=(n-1,\dots,1,0)$. We write $\rmG:=G(K)$, $\rmK :=G(\cO_K)$, and $\rmZ$ for the center of $\rmG$. For $\al\in \Phi$, we write $u_\al: \G_a \ra G$ for the morphism associated to the root subgroup of $\al$.

We write $\Ind$ and $\cind$ for the usual induction and compact induction, respectively. Let $H$ be a topological group with a compact open subgroup $H'$. For $h\in H$, $H'$-representation $V$, and $v\in V$, we write $[h,v]\in \cind_{H'}^H V$ for the function supported on $H'h^\mo$ and maps $h^\mo$ to $v$. Note that $h'[h,v]=[h'h,v]$ for $h'\in H$.

For $A\in \GL_n(R)$ and $B\in M_n(R)$, we write $\Ad_A(B) := A B A^\mo$.

\section{Representation theory}\label{sec:rep}

\subsection{Preliminaries} We review some background material. We closely follow \cite[\S2]{LLLMlocalmodel}.

\subsubsection*{Affine Weyl groups} Let $W$ be the Weyl group associated to $(G,T)$. We define $W_a:=\Lam \rtimes W$ and $\tilW:= X^*(T) \rtimes W$. Let $\cA$ be the set of alcoves in $X^*(T)\otimes \R$. Let $A_0\in \cA$ be the dominant base alcove. Then $\tilW$ acts transitively on $\cA$. We define $\Om:= \{ \tilw\in \tilW \mid \tilw A_0 = A_0\}.$ Then $\tilW = \Om \rtimes W_a$. We denote by $\ell$ the Coxeter length function on $W_a$. The Bruhat order on $\cA$ given by the choice of $A_0$ induces an ordering on $W_a$. We also have $\uparrow$-order on $\cA$ which induces the $\uparrow$-order on $W_a$. For $\tilw,\tilw' \in W_a$ and $\del \in \Om$, we write $\tilw\del \le \tilw'\del$ if $\tilw \le \tilw'$ and similarly for the $\uparrow$-order. We extend $\ell$ to $\tilW$ by $\ell(\tilw \del) := \ell(\tilw)$. 
 
For $\al\in \Phi^+$ and $m\in \Z$, we define
\begin{align*}
    H_{\al}^{(m,m+1)} &:= \{ \lam\in X^*(T)\otimes \R \mid m<\RG{\lam,\al^\vee}<m+1 \} \\
H_{\al,m} &:= \{ \lam\in X^*(T) \mid \RG{\lam,\al^\vee} = m \}.
\end{align*}
For $\tilw\in \tilW$ and $\al\in \Phi^+$, we define $m_{\tilw,\al}\in \Z$ by the condition $\tilw A_0\subset H_{\al}^{(m_{\tilw,\al},m_{\tilw,\al+1})}$. Since $\ell(\tilw)$ is equal to the number of hyperplanes $H_{\al,m}$ separating $A_0$ and $\tilw A_0$, we have $\ell(\tilw) = \sum_{\al\in \Phi^+} \abs{m_{\tilw,\al}}$.

We say that an alcove $A\in \cA$ is \textit{restricted} (resp.~\textit{regular}) if it is contained in (resp.~not contained in) $H_{\al}^{(0,1)}$ for all $\al\in \Del$. We say that $\tilw \in \tilW$ is restricted (resp.~regular) if $\tilw A_0$ is. Let $\cA_1$ be the set of restricted alcoves. We define $\tilW_1\subset \tilW$ to be the subset of restricted $\tilw$. We also define $X^0(T)\subset X^*(T)$ to be the subset of $\lam$ such that $\RG{\lam,\al^\vee}=0$ for all $\al\in \Del$. Then there is a bijection between
\begin{align*}
    W &\risom \tilW_1/X^0(T) \\
    w &\mapsto w^\dia := t_{\nu_w} w
\end{align*}
where $\nu_w\in X^*(T)$ is a character unique up to $X^0(T)$ such that $t_{\nu_w}w \in \tilW_1$. We often write $w^\dia$ for its lift in $\tilW_1$.

\begin{rmk}\label{rmk:superadd}
    For $\tilw=t_\nu w\in \tilW$, $\al\in \Phi^+$, and $x\in A_0$, we have
    \begin{align*}
         m_{\tilw,\al} = \floor{\RG{\tilw(x),\al^\vee}} = \RG{\nu,\al^\vee} - \del_{w^\mo(\al)^\vee<0}. 
    \end{align*}

    We can view $\al\mapsto m_{\tilw,\al}$ as a function $\Phi \ra \Z$. Then it satisfies the \textit{optimal superadditivity}: for $\al_1,\al_2\in \Phi$ such that $\al_1+\al_2\in \Phi$, 
    \begin{align*}
        m_{\tilw,\al_1} + m_{\tilw,\al_2} \le m_{\tilw,\al_1+\al_2} \le m_{\tilw,\al_1} + m_{\tilw,\al_2} +1.
    \end{align*}
    Note that $\tilw$ is restricted if and only if $m_{\tilw,\al}=0$ for all $\al\in \Del$.
\end{rmk}

\begin{rmk}\label{rmk:def-S}
    Let $G=\GL_n$ and $S\subset W$ be the subgroup generated by the $n$-cycle $(12\dots n)$. Then $S$ is the preimage of $\Omega$ under $w\mapsto w^\dia$. Note that $w_1^\dia w_2^\dia \neq (w_1w_2)^\dia$ for $w_1,w_2\in W$ in general, but the equality holds if $w_2\in S$. Then $w\mapsto w^\dia$ defines a natural bijection $W/S \simeq \cA_1$. In particular, $\#\cA_1 = (n-1)!$.
\end{rmk}

For $\lam\in X^*(T)$, we define the admissible set introduced by Kottwitz and Rapoport
\begin{align*}
    \Adm(\lam) = \cup_{w\in W} \{ \tilw \in \tilW \mid \tilw \le t_{w(\lam)}\}.
\end{align*}
We denote by $\Adm^{\reg}(\lam) \subset \Adm(\lam)$ the subset of $\tilw \in \Adm(\lam)$ that is regular.

We also have the dual extended affine Weyl group $\tilW^\vee := X_*(T^\vee)\rtimes W$. We have a bijection  $\tilW \simeq \tilW^\vee$ given by $\tilw = t_\nu w \mapsto \tilw^*:= w^\mo t_{\nu}$. We denote by $\Adm^\vee(\lam)$ the image of  $\Adm(\lam)$ under this bijection. We call an element $\tilw \in \Adm(\lam)$ (and $\tilw^*$) \textit{extremal} (resp.~\textit{of colength one}) if it attains the maximal length,~i.e.~$\tilw=t_{w(\lam)}$ for some $w\in W$ (resp.~if $\ell(\tilw)=\ell(t_{\eta})-1$). We say that $\tilw$ is \textit{of colength at most one} if $\ell(\tilw)\ge \ell(t_{\eta})-1$.

To ease the notation, we write $w^{-\dia} := (w^\dia)^\mo$, $\tilw^{-*} := (\tilw^\mo)^*$, $w^{\dia*} := (w^\dia)^*$, and $w^{-\dia*}:= (w^{-\dia})^*$ for $w\in W$ and $\tilw\in \tilW$. Caution: $(w^\mo)^\dia \neq w^{-\dia}$ in general.

We also define a ($\eta$-shifted) \textit{$p$-alcove} $C$ by 
\begin{align*}
    C:=\{ \lam \in X^*(T)\otimes \R \mid pm_\al < \RG{\lam+\eta, \al^\vee} < p(m_\al+1) \forall \al\in \Phi^+\}
\end{align*}
for some $m_{\al}\in \Z$. We write $C_0$ for the dominant base $p$-alcove and $\cC$ for the set of $p$-alcoves. For $\tilw=t_\nu w\in \tilW$ and $\lam\in X^*(T)\otimes \R$, we define the \textit{dot action} $\tilw \cdot \lam := p\nu + w(\lam+\eta)-\eta$. Then $\tilW$ acts on $\cC$ via the dot action. 

We often consider a $f$-fold product of $\tilW$, denoted by $\tilW^\cJ$. Let $\pi: \tilW^\cJ \ra \tilW^\cJ$ be an automorphism given by $\pi(\tilw)_j = \tilw_{j+1}$ for $\tilw=(\tilw_j)_{\jj}$. For $\jj$, we write $\sig_j$ for the embedding $\tilW\mono \tilW^\cJ$ into $j$th coordinate. We also write $\pi$ (resp.~$\sig_j$) for its restrictions to $X^*(T)^\cJ$ and $W^\cJ$ (resp.~$X^*(T)$ and $W$).

\subsubsection*{Serre weights} A \textit{Serre weight} of $G(k)$ is an irreducible $\F$-representation of $G(k)$. We can classify Serre weights of $G(k)$ using the restricted group $\Res_{k/\Fp}G_{/k}$. Its character group is given by $f$-fold product of $X^*(T)$. Define the set of $p$-restricted dominant weights
\begin{align*}
    X_1^*(T):= \CB{{\lam} \in X^*(T) \mid 0 \le \RG{{\lam},\al^\vee} \le p-1, \  \forall \al^\vee \in \Del^{\vee}}.
\end{align*}
Then we have a bijection (see \cite[Lem.~9.2.4]{GHS-JEMS-2018MR3871496})
\begin{align*}
    \frac{X_1^*(T)^\cJ}{(p-\pi)X^0(T)^\cJ} &\simeq \CB{\text{Serre weights of $\GL_n(k)$}}/\simeq \\
   {\lam}=(\lam_j)_{\jj}  &\mapsto F({\lam}).
\end{align*}
A \textit{lowest alcove presentation} of $\sig$ is a pair $(\tilw, \om)$ of $\tilw\in \tilW_1^\cJ$ and $\om-\eta \in C_0^\cJ \cap X^*(T)^\cJ$ such that $\sig = F(\pi^\mo(\tilw)\cdot (\om-\eta))$. For $m\in \Z$, we say that $\sig$ is \textit{$m$-deep} if $m<\RG{\om,\al^\vee}<p-m$ for all $\al\in \cup_{\jj}\sig_j(\Phi^+)$.

For $\lambda\in X_1^*(T)$ and a standard Levi subgroup $M\subset G$, we write $F_M(\lam)$ for the Serre weight of $M(k)$ by applying the above formalism for $M$. When $\sig=F(\lam)$, we call $\lam$ the \textit{$p$-restricted highest weight of} $\sig$. By \cite[Lemma 2.3]{HerzigDuke}, $\sig^{U(k)}$ is a character of $T(k)$ given by $F_T(\lam)$, and we call it the  \textit{highest weight of $\sig$}.

\subsubsection*{Tame inertial types and the inertial local Langlands} A \textit{tame inertial type} over $R$ (for $R=E$ or $\F$) is a $G^\vee(R)$-conjugacy class of homomorphisms $I_K \ra G^\vee(R)$ that are tamely ramified. Since it is valued in $T^\vee(R)$ up to conjugation, tame inertial types over $E$ and over $\F$ are in bijection under mod $p$ reduction and Teichm\"uller lifting. We refer to \cite[\S2.4]{LLLMlocalmodel} for a detailed discussion on tame inertial types. We simply mention that a tame inertial type $\tau$ (either over $E$ or over $\F$) can be given by a pair $(s,\mu)\in W^\cJ \times X^*(T)^\cJ$, in which case we write $\tau = \tau(s,\mu+\eta)$. For $\tilw=t_\nu w \in \tilW$, we have $\tau(s,\mu+\eta)\simeq \tau(s',\mu'+\eta)$ where
\begin{align}\label{change-of-LAP}
    (s',\mu')= \tilw\cdot (s,\mu) := (ws\pi(w)^\mo, \tilw\cdot \mu - ws\pi(w)^\mo \pi(\tilw)(0)).
\end{align}
When $\mu\in C_0^\cJ$, we say that $(s,\mu)$ is a \textit{lowest alcove presentation} of $\tau$, and $\tau$ is \textit{$m$-generic} if $m<\RG{\mu+\eta,\al^\vee}<p-m$ for all $\al\in \cup_{\jj}\sig_j(\Phi^+)$.  We write $\tilw(\tau):=t_{\mu+\eta}s$ and $\tilw^*(\tau) := \tilw(\tau)^*$. For another tame inertial type $\rhobar$ with a given lowest alcove presentation, we write $\tilw(\rhobar,\tau) := \tilw(\tau)^\mo \tilw(\rhobar)$ and $\tilw^*(\rhobar,\tau) := \tilw(\rhobar,\tau)^*$.  

Let $G=\GL_n$. For a tame inertial type $\tau$ over $E$, we denote by $\tau \mapsto \sig(\tau)$ the inertial local Langlands correspondence \cite[Theorem 2.5.4]{LLLMlocalmodel}. Here, $\sig(\tau)$ is a Deligne--Lusztig representation, and it is a genuine irreducible representation of $G(k)$ over $E$ when $\tau$ admits a 1-generic lowest alcove presentation (see \S2.3 in \loccit).

\begin{example}\label{ex:PS}
    Let $\sig:=F(\mu)$ with $\mu\in X^*_1(T)^\cJ$. For $\tau=\tau(e,\mu)$, $\sig(\tau)$ is the induced representation $\Ind_{B(k)}^{G(k)} [\tld{\mu}]$ where $\tld{\mu} := \sig^{U(k)}$ is the highest weight of $\sig$.
\end{example}

Let $\osig(\tau)$ be the mod $p$ reduction of a $G(k)$-stable $\cO$-lattice in $\sig(\tau)$ and $\JH(\osig(\tau))$ be its set of irreducible constituents.  When $\tau$ is $(n-1)$-generic, we have an explicit description of $\JH(\osig(\tau))$ \cite[Theorem 1.1]{LLLM-DL} (also, see \cite[Proposition 2.3.7]{LLLMlocalmodel}).

\subsection{Weight cycling argument}
In this subsection, we allow $K/\Qp$ to be ramified and denote its uniformizer by $\pi_K$. We introduce the weight cycling argument. First, we recollect a few results on the Hecke algebra of a Serre weight.

For a dominant or antidominant cocharacter $\mu\in X_*(T)$, we define $P_\mu = M_\mu N_\mu$ to be the parabolic subgroup with Levi factor $M_\mu$ and unipotent radical $N_\mu$ given by the condition that $\al$-entry vanishes for all roots $\al$ such that $\RG{\al,\mu}>0$. Note that $P_{-\mu}$ is the opposite parabolic to $P_\mu$. For a Serre weight $\sig$, the natural map $\sig^{N_{-\mu}(k)} \mono \sig \epi \sig_{N_\mu(k)}$ is an isomorphism of $M_\mu(k)$-representations, and $\sig^{N_{-\mu}(k)}$ is a Serre weight of $M_\mu(k)$ \cite[Lemma 2.3]{HerzigDuke}.

Let $\cH(\sig):= \End_{\rmG}(\cind_\rmK^\rmG \sig)$ be the Hecke algebra of $\sig$. It can be identified with the algebra of $\rmK$-bi-equivariant functions $\phi: \rmG \ra \End_{\F}(\sig)$ with compact support. For $\mu\in X_*(T)^-$, let $T_{\mu}$ be the Hecke operator supported on $\rmK t \rmK$ where $t=\mu(\pi_K)$ and $T_\mu(t)$ is given by
\begin{align*}
    \sig \epi \sig_{N_\mu(k)} \simeq \sig^{N_{-\mu}(k)} \mono \sig.
\end{align*}
By \cite[Theorem 1.2]{HerzigSatake}, $\cH(\sig)\simeq \F[x_1,\dots, x_{n-1},x_n^\pm]$ under which $x_i$ is identified with $T_{-\om_i}$ for $1\le i \le n-1$ and $x_n$ is identified with $T_{(-1,-1,\dots,-1)}$. 

Recall that for $v\in \sig$, $[1,v] \in \cind_\rmK^\rmG \sig$ is the function supported on $\rmK$ and maps $1\in \rmK$ to $v$. For $\varphi\in \cH(\sig)$, we have (see the proof of Proposition 2.9 in \cite{HerzigAWS}) 
\begin{align}\label{eqn:convolution}
    \varphi( [1,v]) = \sum_{\gamma\in \rmK\backslash\rmG} \gamma^\mo[1,\varphi(\gamma)(v)].
\end{align}

\begin{defn}
    For a Serre weight $\sig$ and an antidominant fundamental cocharacter $\mu$, we define
    \begin{align*}
        C_\mu(\sig) := (\Ind_{P_{\mu}(k)}^{G(k)} \sig^{N_{-\mu}(k)})/\sig.
    \end{align*}
    Here, the embedding of $\sig$ into $\Ind_{P_\mu(k)}^{G(k)} \sig^{N_{-\mu}(k)}$ is induced by $\sig_{N_{\mu}(k)}\simeq \sig^{N_{-\mu}(k)}$ and the Frobenius reciprocity.
\end{defn}

\begin{prop}[Weight cycling]\label{prop:WC}
    Let $\sig$ be a Serre weight and $\mu$ be an antidominant fundamental cocharacter. Then there is a commutative diagram
    \[
    \begin{tikzcd}
        & \cind_\rmK^\rmG \Ind_{P_\mu(k)}^{G(k)} \sig^{N_{-\mu}(k)} \arrow[rd, two heads, dashed] & \\
        \cind_\rmK^\rmG \sig \arrow[ru, hook] \arrow[rr, "T_\mu"] & & \cind_\rmK^\rmG \sig
    \end{tikzcd}
    \]
    where all morphisms are $\rmG$-equivariant and the left diagonal arrow is induced by $\sig \mono \Ind_{P_\mu(k)}^{G(k)} \sig^{N_{-\mu}(k)}$. In particular, there is a $\rmG$-equivariant surjection
    \begin{align*}
        \cyc_\mu : \cind_\rmK^\rmG C_\mu(\sig) \epi \cind_\rmK^\rmG  \sig/T_\mu.
    \end{align*}
\end{prop}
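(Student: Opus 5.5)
We prove the factorization by exhibiting the dashed arrow through Frobenius reciprocity and then checking agreement with $T_\mu$ on the generator $[1,v]$. Put $t=\mu(\pi_K)$, $P=P_\mu$, and let $\cP\subset\rmK$ be the parahoric that is the preimage of $P(k)$ under reduction. Then $\cind_\rmK^\rmG \Ind_{P(k)}^{G(k)}\sig^{N_{-\mu}(k)}\simeq \cind_{\cP}^{\rmG}\sig^{N_{-\mu}(k)}$, where the inflated $M_\mu(k)$-representation $\sig^{N_{-\mu}(k)}$ is viewed as a $\cP$-representation. By Frobenius reciprocity, a $\rmG$-map $\cind_\cP^\rmG \sig^{N_{-\mu}(k)}\ra\cind_\rmK^\rmG\sig$ is the same as a $\cP$-equivariant map $\sig^{N_{-\mu}(k)}\ra \cind_\rmK^\rmG\sig$. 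We take
\begin{align*}
    \Psi: w\mapsto [t^{-1}, w] \quad (\text{or } [t,w],\ \text{depending on the convention in \eqref{eqn:convolution}}).
\end{align*}
To see that $\Psi$ is $\cP$-equivariant, we use that $\mu$ is antidominant. Conjugation by $t^{\pm1}$ contracts the appropriate unipotent part of $\cP$ into $\rmK$, sending it into the kernel of reduction or into $N_{-\mu}(\cO_K)$, which acts trivially on $\sig^{N_{-\mu}(k)}$. Likewise $M_\mu(\cO_K)$ commutes with $t$. Consequently $p[t^{-1},w]=[t^{-1}(tpt^{-1}),w]=[t^{-1},\overline{tpt^{-1}}\,w]$, and since $\overline{tpt^{-1}}$ acts on $\sig^{N_{-\mu}(k)}$ through its $M_\mu(k)$-component, this equals $[t^{-1},pw]$. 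Choosing the correct sign of $t$ here is the one bookkeeping point that must be matched with the definition of $P_\mu$.

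Next we check the triangle. The left diagonal arrow sends $[1,v]$ to $\sum_{g\in \rmK/\cP}[g,\ \overline{\mathrm{pr}}(g^{-1}v)]$, where $\mathrm{pr}:\sig\epi\sig_{N_\mu(k)}\simeq\sig^{N_{-\mu}(k)}$; this is the Frobenius reciprocity description of the embedding of $\sig$. Applying $\Psi$ gives $\sum_{g\in\rmK/\cP} g\,[t^{-1},\mathrm{pr}(g^{-1}v)]$. On the other side, \eqref{eqn:convolution} expresses $T_\mu([1,v])$ as a sum over $\rmK\backslash\rmK t\rmK$. Since $\mu$ is minuscule (fundamental), the Hecke operator $T_\mu$ is supported on the single double coset $\rmK t\rmK$, which decomposes as $\bigsqcup_{g\in\rmK/\cP} g t\rmK$ or the inverse version. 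Indeed, $\rmK\cap t\rmK t^{-1}$ is exactly the preimage of a parabolic: for minuscule $\mu$ the stabilizer is the full parahoric $\cP$ or its opposite. The terms in the two sums then match, using $T_\mu(t)=\iota\circ\mathrm{pr}$ with $\iota:\sig^{N_{-\mu}(k)}\mono\sig$ and $\rmK$-bi-equivariance; the inclusion $\iota$ is absorbed because $[t^{-1},\iota(w)]$ is how $\Psi(w)$ lives in $\cind_\rmK^\rmG\sig$. The main obstacle is this coset matching: we must confirm that $\rmK\cap t^{-1}\rmK t$ is the preimage of $P_\mu(k)$ rather than of $P_{-\mu}(k)$, and this requires fixing the sign conventions consistently.

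Surjectivity of the dashed arrow holds because its image is $\rmG$-stable and contains $\Psi(w)=[t^{-1},w]$ for every $w\in\sig^{N_{-\mu}(k)}$. These elements generate: the subrepresentation generated by such $[t^{-1},w]$ contains $t[t^{-1},w]=[1,w]$, and $\sig^{N_{-\mu}(k)}\neq 0$ generates the irreducible $\rmK$-module $\sig$, so all $[1,v]$ lie in the image. Finally, the composite $\cind_\rmK^\rmG \Ind_{P(k)}^{G(k)} \sig^{N_{-\mu}(k)}\epi\cind\sig\epi\cind\sig/T_\mu$ kills the image of $\cind\sig$ by commutativity of the triangle, and by exactness of $\cind$ this image is $\cind(\sig)$ inside $\cind \Ind$. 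Hence the composite factors through $\cind_\rmK^\rmG C_\mu(\sig)$, which yields $\cyc_\mu$, and $\cyc_\mu$ remains surjective.
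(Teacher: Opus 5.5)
Your proposal is correct and follows the paper's proof. Your $\Psi(w)=[t^{-1},w]=t^{-1}[1,w]$ is exactly the paper's $\theta=t^{-1}\circ f\circ(\sig^{N_{-\mu}(k)}\mono\sig)$ with $f$ the natural embedding, the triangle is checked by the same coset decomposition of $\rmK t\rmK$ via \eqref{eqn:convolution}, and your direct surjectivity argument (the image contains $t^{-1}[1,w]$, hence all of $\sig$) is the unwound form of the paper's cokernel contradiction; the sign you leave open is resolved as in the paper, where $\cP_\mu=\rmK\cap t^{-1}\rmK t$ is the preimage of $P_\mu(k)$, the choice that makes $T_\mu(t)$ (factoring through $\sig_{N_\mu(k)}$) compatible with $\rmK$-bi-equivariance.
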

When $G=\GL_n$ and $\mu=-\om_i$, we write $C_i(\sig)=C_\mu(\sig)$ and $\cyc_i = \cyc_\mu$.

\begin{proof}
    The following argument is essentially due to Emerton--Gee--Herzig. Let $\pi$ be a smooth $\rmG$-representation over $\F$ with a non-zero morphism $f: \sig \ra \pi|_\rmK$. There is a commutative diagram of $\rmK$-representations with a non-zero map $\tilde{\theta}$
    \[
    \begin{tikzcd}
        & \Ind_{P_\mu(k)}^{G(k)} \sig^{N_{-\mu}(k)} \arrow[rd, dashed, "\tilde{\theta}"] & \\
        \sig \arrow[ru, hook, "\iota"] \arrow[rr, "T_\mu(f)"] & & \pi|_\rmK.
    \end{tikzcd}
    \]
    For $\pi=\cind_\rmK^\rmG \sig$ and $f$ the natural embedding of $\sig$ into $\cind_\rmK^\rmG \sig$, we get the diagram in the Proposition using Frobenius reciprocity.

    Let $t= \mu(\pi_K)$. The right diagonal arrow $\tilde{\theta}$ is induced by $\theta$ in the following diagram
    \[
    \begin{tikzcd}
        \sig \arrow[rrr, bend right=16, "T_{\mu}(t)"']  \arrow[r, two heads] & \sig_{N_{\mu}(k)}  & \sig^{N_{-\mu}(k)} \arrow[rrr, bend right=20, "\theta"'] \arrow[l, "\simeq"'] \arrow[r, hook] & \sig \arrow[r, "f"] & \pi \arrow[r, "t^\mo"] & \pi.
    \end{tikzcd}
    \]
    Since $T_\mu(t)$ is $\rmK$-bi-equivariant and $f$ is $\rmK$-linear, $\theta$ is $\cP_\mu:=\rmK\cap (t^\mo \rmK t)$-linear. Note that $\cP_\mu$ is the parahoric subgroup given by the inverse image of $P_\mu(k)$ under $\rmK \epi G(k)$. Since $t^\mo f$ is injective, $\theta$ is non-zero, and thus $\tilde{\theta}$ is non-zero.

    We denote the projection $\sig \epi \sig_{N_{\mu}(k)}  \simeq \sig^{N_{-\mu}(k)}$ by $v\mapsto \ov{v}$. We have
    \begin{align*}
        \iota(v) = \sum_{k\in P_\mu(k)\backslash G(k)}k^\mo[1,\ov{kv}].
    \end{align*}
    Then $\tilde{\theta}\circ \iota = T_\mu(f)$ by a direct computation using \eqref{eqn:convolution}:
    \begin{align*}
        \tilde{\theta}\circ\iota(v) = \sum_{k\in P_\mu(k)\backslash G(k)}k^\mo \theta(\ov{kv}) = \sum_{k\in P_\mu(k)\backslash G(k)}k^\mo t^\mo f(T_\mu(t)(kv)) = \sum_{g\in \rmK\backslash \rmK t \rmK}g^\mo  f(T_\mu(g)(v)) = T_\mu(f)(v).
    \end{align*}

    Finally, we show that the map
    \begin{align*}
        \cind_\rmK^\rmG \Ind_{P_\mu(k)}^{G(k)} \sig^{N_{-\mu}(k)} \ra \cind_\rmK^\rmG \sig
    \end{align*}
    is surjective. Let $\pi$ be its cokernel. If $\pi$ is non-zero, we have a non-zero map $\sig \ra \pi|_\rmK$. Thus, we get a non-zero map from $\cind_\rmK^\rmG \Ind_{P_\mu(k)}^{G(k)} \sig^{N_{-\mu}(k)}$ to $\pi$. However, by its construction, it factors through the composition
    \begin{align*}
        \cind_\rmK^\rmG \Ind_{P_\mu(k)}^{G(k)} \sig^{N_{-\mu}(k)} \ra \cind_\rmK^\rmG \sig \epi \pi
    \end{align*}
    which is zero. This is a contradiction.
\end{proof}

\begin{rmk}
    Proposition \ref{prop:WC} is essentially a dual statement of \cite[Proposition 2.3.1]{EGHweightcyc}. Since $\pi$ is an arbitrary smooth representation of $\rmG$ over $\F$ in \loccit, their statement applied to $V=\sig^\vee$ can be viewed as a commutative diagram of functors
    \[
    \begin{tikzcd}
        & (\Ind_{P_{w_0(\mu)}(k)}^{G(k)} (\sig^\vee)^{N_{w_0(\mu)}(k)} \otimes_\F -)^{\rmK} \arrow[rd] & \\
        (\sig^\vee \otimes_\F -)^\rmK \arrow[ru, dashed, hook] \arrow[rr, "T_\mu"] & & (\sig^\vee \otimes_\F -)^\rmK.
    \end{tikzcd}
    \]
    Note that \loccit~implicitly identifies $\cH(\sig)$ and $\cH(\sig^\vee)^{\mathrm{op}}$ (the opposite algebra) using the anti-isomorphism
    \begin{align*}
        \iota: \cH(\sig) &\simeq \cH(\sig^\vee)^{\mathrm{op}} \\
        \varphi &\mapsto \iota(\varphi)(g)=\varphi(g^\mo)^\vee
    \end{align*}
    where $\varphi(g^\mo)^\vee$ acts on $\sig^\vee$ via precomposition-by-$\varphi(g^\mo)$. Then $\iota$ maps $T_\mu\in \cH(\sig)$ to $T_{-w_0(\mu)}\in \cH(\sig^\vee)^{\mathrm{op}}$. This explains the presence of $w_0(\mu)$ in the top induced representation. Write $M=M_{\mu}$ and $M'=M_{-w_0(\mu)}$. Then we have 
    \begin{align*}
        \Ind_{P_{w_0(\mu)}(k)}^{G(k)} (\sig^\vee)^{N_{w_0(\mu)}(k)} \simeq \Ind_{P_{w_0(\mu)}(k)}^{G(k)} F_{M'}(-w_0(\lambda)) \simeq (\Ind_{P_{w_0(\mu)}(k)}^{G(k)} F_{M'}(w_{0,M'}w_0(\lambda)))^\vee \simeq (\Ind_{P_{\mu}(k)}^{G(k)} F_{M}(\lambda))^\vee.
    \end{align*}
    The second isomorphism follows from the commutativity between parabolic induction and duality for finite groups. The third isomorphism follows from $F_{M'}(w_{0,M'}w_0(\lambda))\ix{w_0} \simeq F_{M}(\lambda)$ where we use $w_{0,M}=w_0w_{0,M'}w_0$. Note that $F_M(\lambda)=\sig^{N_{-\mu}(k)}$. Since $(\sig^\vee \otimes_\F -)^\rmK \simeq \Hom_{\rmG}(\cind_\rmK^\rmG \sig, -)$ and similarly for $\sig^\vee$ replaced by $(\Ind_{P_\mu(k)}^{G(k)} \sig^{N_{-\mu}(k)})^\vee$, Proposition \ref{prop:WC} follows from Yoneda's lemma.
\end{rmk}

Note that $C_\mu(\sig)$ is usually non-semisimple. We are not aware of a general result describing the socle filtration of $C_\mu(\sig)$, or even the irreducible constituents of it. Still, we have the following partial result.

\begin{lem}\label{lem:upperbound}
    Let $\sig$ and $\mu$ be as in Proposition \ref{prop:WC}. Let $\tld{\lambda}:=\sig^{U(k)}$ be the highest weight. Then there is a $G(k)$-equivariant injection
    \begin{align*}
       C_\mu(\sig) \mono (\Ind_{\ov{B}(k)}^{G(k)}\tld{\lam})/\sig. 
    \end{align*}
    In particular, $\sig \notin \JH(C_\mu(\sig)) \subset \JH(\Ind_{\ov{B}(k)}^{G(k)}\tld{\lam})$.
\end{lem}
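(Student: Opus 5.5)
The plan is to use transitivity of parabolic induction and compare the Levi weight $\sig^{N_{-\mu}(k)}$ with a principal series of the Levi. Write $M=M_\mu$, $P=P_\mu=MN_\mu$, and $\ov{B}_M := \ov{B}\cap M$, the opposite Borel of $M$. By \cite[Lemma 2.3]{HerzigDuke} applied to $M$, the Serre weight $\sig^{N_{-\mu}(k)}=F_M(\lam)$ of $M(k)$ has highest weight $\tld{\lam}$, since $(\sig^{N_{-\mu}(k)})^{U_M(k)}$ is the line $\sig^{U(k)}$, using $U=U_M N_{-\mu}$ with the sign conventions fixed in the definition of $P_\mu$. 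By Frobenius reciprocity, the $\ov{B}_M(k)$-coinvariant quotient, equivalently the restriction to the lowest weight line, gives a non-zero, hence injective, $M(k)$-map
\[
\sig^{N_{-\mu}(k)} \mono \Ind_{\ov{B}_M(k)}^{M(k)} \tld{\lam}.
\]
Concretely, the socle of $\Ind_{\ov{B}_M(k)}^{M(k)}\tld\lam$ is the irreducible with highest weight $\tld\lam$, which is why the opposite Borel is used. Applying the exact functor $\Ind_{P(k)}^{G(k)}$, with $N_\mu$ acting trivially, and using $\ov{B}_M N_\mu$. The subtle point is to check that $\ov{B}_M(k)N_\mu(k)$ equals $\ov{B}(k)$, rather than an opposite-type group. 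If instead it is a different Borel $B'=w\ov{B}w^{-1}$, I would conjugate by a Weyl element. The induced representation changes to $\Ind_{\ov B(k)}^{G(k)}$ of a twisted character, and in that case I would reselect the Levi principal series accordingly, namely induce from $B_M$ to get $\ov{B}$ globally. The first thing to verify is therefore which of $B_M N_\mu$ or $\ov B_M N_\mu$ equals $\ov{B}$ given that $N_\mu$ consists of roots $\al$ with $\RG{\al,\mu}<0$ for antidominant $\mu$. Those are the positive roots, so one should instead use $\Ind_{B_M(k)}^{M(k)}$ of the appropriate character. Once this is fixed, we obtain an injection $\Ind_{P(k)}^{G(k)}\sig^{N_{-\mu}(k)} \mono \Ind_{\ov{B}(k)}^{G(k)}\tld\lam$ (after possibly replacing the parabolic by its opposite, which is harmless for the Levi-level statement through the twist).

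Next, I would check that the two embeddings of $\sig$ are compatible. Both $\sig\mono\Ind_{P(k)}^{G(k)}\sig^{N_{-\mu}(k)}$ and $\sig\mono\Ind_{\ov B(k)}^{G(k)}\tld\lam$ come from Frobenius reciprocity. Moreover, $\Hom_{G(k)}(\sig,\Ind_{\ov B(k)}^{G(k)}\tld\lam)=\Hom_{\ov B(k)}(\sig,\tld\lam)$ is one-dimensional, since $\sig_{\ov U(k)}$ is the line of weight $\tld\lam$ \cite[Lemma 2.3]{HerzigDuke}. Hence the composite $\sig\mono \Ind_{P}\mono\Ind_{\ov B}$ is a non-zero scalar multiple of the standard embedding, and it is non-zero because it is a composite of injections. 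Taking quotients by $\sig$ yields the injection $C_\mu(\sig)\mono(\Ind_{\ov B(k)}^{G(k)}\tld\lam)/\sig$.

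For the final claim, the same Frobenius reciprocity shows that $\sig$ occurs with multiplicity one in $\Ind_{\ov B(k)}^{G(k)}\tld\lam$. I would get this from the dimension of the $\tld\lam$-isotypic part of the $U(k)$-invariants counted via Bruhat decomposition, or from the standard fact that $\sig$ is the unique constituent with that $U(k)$-invariant weight in the big cell contribution. This multiplicity statement is the main obstacle, since it requires genericity-free multiplicity one. I would argue as follows. $\tld\lam$ appears in $(\Ind\tld\lam)^{U(k)}$ only from the cell $\ov B(k)w_0U(k)$ up to weight coincidences, and coincidences can add multiplicity only if some Weyl conjugate $w\tld\lam$ equals $\tld\lam$ on $T(k)$. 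In that case I would instead compare via the composition factor count using $\dim\Hom(\Ind,\,\cdot)$ and the socle. Granting multiplicity one, $\sig\notin\JH((\Ind\tld\lam)/\sig)\supset\JH(C_\mu(\sig))$.
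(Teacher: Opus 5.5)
Your outline is the paper's: embed $\sig^{N_{-\mu}(k)}$ into a principal series of $M_\mu(k)$, induce using transitivity, and pass to the quotient by the copy of $\sig$. Your check that this copy is the standard one, via $\dim\Hom_{G(k)}(\sig,\Ind_{\ov B(k)}^{G(k)}\tld\lam)=1$, is correct.

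The Borel bookkeeping, however, is self-contradictory. In your first paragraph you use $U=U_MN_{-\mu}$, i.e.\ $N_\mu\subset\ov U$. This is the convention the paper's proof actually relies on: it is what makes $\sig^{N_{-\mu}(k)}=F_{M_\mu}(\lam)$ true, and it is also the one for which $\rmK\cap t^{-1}\rmK t$ in the proof of Proposition \ref{prop:WC} is the preimage of $P_\mu(k)$. With it, $(\ov B(k)\cap M_\mu(k))N_\mu(k)=\ov B(k)$ on the nose, and there is nothing further to check.

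You then switch to the literal reading of the definition of $P_\mu$, in which $N_\mu$ consists of positive roots. In that convention three things go wrong. Your first paragraph fails: $U\neq U_MN_{-\mu}$, and $\sig^{N_{-\mu}(k)}$ is the $M_\mu(k)$-weight containing the lowest weight line $\sig^{\ov U(k)}$. Next, $B_M(k)N_\mu(k)=B(k)$, not $\ov B(k)$. Finally, ``replacing the parabolic by its opposite'' is not harmless, since $\Ind_P^G$ and $\Ind_{P^-}^G$ of the same Levi representation are in general non-isomorphic. The correct repair in that convention is to embed $\sig^{N_{-\mu}(k)}\mono\Ind_{B_M(k)}^{M(k)}\chi$, with $\chi$ the $T(k)$-character on $\sig^{\ov U(k)}$. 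This gives $\Ind_{P_\mu(k)}^{G(k)}\sig^{N_{-\mu}(k)}\mono\Ind_{B(k)}^{G(k)}\chi\simeq\Ind_{\ov B(k)}^{G(k)}\tld\lam$, the last isomorphism being $f\mapsto f(\dot w_0\,\cdot\,)$. So the injection exists either way, but you must commit to one convention.

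The genuine gap is multiplicity one. It is the only non-formal input, and you leave it ``granted''; your sketch cannot supply it. The $\tld\lam$-eigenspace of $T(k)$ on $(\Ind_{\ov B(k)}^{G(k)}\tld\lam)^{U(k)}$ only controls how often $\sig$ occurs in the \emph{socle}, which $\dim\Hom_{G(k)}(\sig,\Ind_{\ov B(k)}^{G(k)}\tld\lam)=1$ already tells you. The reason is that $(-)^{U(k)}$ is not exact in characteristic $p$: a copy of $\sig$ occurring only as a subquotient need not produce a $U(k)$-invariant vector in the whole module. Your fallback via ``$\dim\Hom(\Ind,\cdot)$ and the socle'' is not an argument either.

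Without multiplicity one, the injection only yields $\JH(C_\mu(\sig))\subset\JH(\Ind_{\ov B(k)}^{G(k)}\tld\lam)$, not $\sig\notin\JH(C_\mu(\sig))$. The paper does not prove this by hand; it cites \cite[Theorem 1.3]{LLLM-DL}, and you should do the same.
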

\begin{proof}
    Note that the socle of $\Ind^{M_{\mu}(k)}_{\ov{B}(k)\cap M_\mu(k)} \tld{\lam}$ contains $F_{M_{\mu}}(\lambda) = \sig^{N_{-\mu}(k)}$. By the transitivity of parabolic induction, we get $\Ind_{P_\mu(k)}^{G(k)} \sig^{N_{-\mu}(k)} \mono \Ind_{\ov{B}(k)}^{G(k)}\tld{\lam}$. This induces the claimed injection. The claim on Jordan--H\"older factors holds because $\sig$ appears in $\JH(\Ind_{\ov{B}(k)}^{G(k)}\tld{\lam})$ with multiplicity one \cite[Theorem 1.3]{LLLM-DL}.
\end{proof}

For $G=\GL_n$, we will show that certain $\sig'$ is a Jordan--H\"older factor of $C_i(\sig)$ with multiplicity one for any $1 \le i\le n-1$ using a local-global compatibility type argument; see the proof of Theorem \ref{thm:main}.

We finish this subsection with lemmas on non-admissibility.

\begin{lem}\label{lem:subquot}
    Let $\pi$ be a smooth $\F[\rmG]$-module and $\pi'$ be a subquotient of $\pi$. If $\pi'$ is non-admissible, then so is $\pi$. 
    
\end{lem}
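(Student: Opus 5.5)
Recall that a smooth $\F[\rmG]$-module $\pi$ is admissible if $\pi^H$ is finite-dimensional for every compact open subgroup $H\subset \rmG$. Equivalently, it suffices to check this for the members of a fixed cofinal family, such as the principal congruence subgroups $\rmK_m$. I will prove the contrapositive: if $\pi$ is admissible, then every subquotient $\pi'$ is admissible. Write $\pi'=\pi_1/\pi_2$ with $\pi_2\subset\pi_1\subset\pi$ subrepresentations, and treat subobjects and quotients separately.

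For subobjects, the claim is immediate. If $\pi_1\subset \pi$, then $\pi_1^H\subset \pi^H$ for every compact open $H$, so $\dim \pi_1^H\le \dim\pi^H<\infty$.

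For quotients, taking $H$-invariants is only left exact, so one cannot directly conclude that $(\pi_1/\pi_2)^H$ is finite-dimensional. The standard remedy is to pass to a pro-$p$ subgroup. Choose $m$ large so that $\rmK_m$ is pro-$p$; the groups $\rmK_{m'}$ with $m'\ge m$ are still cofinal. For a pro-$p$ group $H$ and a smooth $\F[H]$-module $V$, admissibility of $V$ is equivalent to finite-dimensionality of $V^H$. Indeed, $V^H\ne0$ for every nonzero smooth $V$, because the group generated by a vector is a finite $p$-group acting on an $\F_p$-span. Moreover, if $V^H$ is finite-dimensional, then the injection $V\hookrightarrow (V^H)\otimes \mathcal{C}^\infty(H,\F)$, or rather the dual statement that the Pontryagin dual $V^\vee$ is a finitely generated $\F[\![H]\!]$-module by topological Nakayama, gives $\dim V^{H'}<\infty$ for all open $H'\subset H$.

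With this criterion, dualize. The quotient map $\pi_1\twoheadrightarrow \pi'$ gives an injection $(\pi')^\vee\hookrightarrow \pi_1^\vee$ of compact $\F[\![H]\!]$-modules. Since $\pi_1$ is admissible, $\pi_1^\vee$ is finitely generated over the Noetherian ring $\F[\![H]\!]$, a standard fact for compact $p$-adic analytic pro-$p$ groups due to Lazard. Hence the submodule $(\pi')^\vee$ is also finitely generated. It follows that $(\pi')^{H'}$, which is dual to $(\pi')^\vee_{H'}$, is finite-dimensional for every open $H'$. Thus $\pi'$ is admissible, which proves the contrapositive.

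The main point to get right is this quotient step. It relies on the Noetherianity of the Iwasawa algebra and on the duality between smooth admissible representations and finitely generated compact modules. Alternatively, one can cite the standard fact (Emerton, Ordinary parts I, Prop.~2.2.13) that admissible smooth representations form a Serre subcategory; that fact also uses that $\rmG$ is $p$-adic analytic. Everything else is formal.
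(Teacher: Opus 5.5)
Your proof is correct and is essentially the argument behind the result the paper simply cites (Emerton, \emph{Ordinary parts I}, Prop.~2.2.13, or Vign\'eras): subobjects are immediate, and for quotients one dualizes and uses that admissibility is equivalent to finite generation of the Pontryagin dual over the Noetherian Iwasawa algebra $\F\DB{H}$ of a compact open pro-$p$ subgroup $H$. The one imprecise spot is your justification of $V^H\neq 0$, which is worded loosely but is not actually needed, since topological Nakayama alone gives the criterion you use.
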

\begin{proof}
    See {\cite[Proposition 2.2.13]{Emerton-ordinaryI}} or \cite[Th\'eor\`eme 2]{Vigneras-torsion}.
\end{proof}

\begin{lem}\label{lem:non-adm}
    Let $\pi$ be a smooth $\rmG$-representation over $\F$. If $\pi$ admits infinitely many non-isomorphic irreducible quotients, then $\pi$ is non-admissible. 
\end{lem}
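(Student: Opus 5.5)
We have to show: if $\pi$ is smooth and has infinitely many pairwise non-isomorphic irreducible quotients, then $\pi$ is not admissible. We argue by contradiction. Suppose $\pi$ is admissible, so $\pi^{\rmK_1}$ is finite-dimensional for every compact open subgroup $\rmK_1\subset \rmG$.

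\emph{Step 1: reduce to finitely many weights.} Fix a compact open pro-$p$ subgroup, say the pro-$p$ Iwahori $I_1$. Every non-zero smooth $\F$-representation of a pro-$p$ group has non-zero invariants, so each irreducible quotient $\pi\epi\pi_\alpha$ satisfies $\pi_\alpha^{I_1}\neq 0$. Taking invariants is only left exact, so $\pi^{I_1}\to\pi_\alpha^{I_1}$ need not be surjective. Instead I would use the following consequence of irreducibility: $\pi_\alpha$ is generated by the image of any non-zero vector. In particular, choose a weight $\sig_\alpha\subset\operatorname{soc}_\rmK\pi_\alpha$. Then $\pi_\alpha$ is a quotient of $\cind_\rmK^\rmG\sig_\alpha$. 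This alone does not yet use admissibility of $\pi$.

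\emph{Step 2: use admissibility to get finiteness.} The cleaner route is to work with kernels. Let $\pi_1,\pi_2,\dots$ be pairwise non-isomorphic irreducible quotients, with kernels $N_i$. The $N_i$ are distinct maximal subrepresentations. For each $m$, the map
\[
\pi\to\pi_1\oplus\cdots\oplus\pi_m
\]
is surjective. This is standard: the kernels are distinct maximal submodules, and by induction $\pi/(N_1\cap\dots\cap N_{m-1})\cong\bigoplus_{i<m}\pi_i$. Adding $N_m$, the image of $N_1\cap\dots\cap N_{m-1}$ in $\pi_m$ is either $0$ or all of $\pi_m$. If it were $0$, then $\pi_m$ would be a quotient of $\bigoplus_{i<m}\pi_i$, hence isomorphic to some $\pi_i$ with $i<m$, which is excluded.

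Now fix $\rmK_1=\rmK(1)=\ker(\rmK\to G(k))$, a normal pro-$p$ subgroup of $\rmK$. The key point is to pass from $\pi$ to the quotient while retaining finite-dimensionality. Each $\pi_i$ is admissible, since it is a quotient of an admissible representation; admissibility is preserved under quotients by Lemma \ref{lem:subquot}, which is the sense in which I would cite it. In particular the direct sum $\bigoplus_{i\le m}\pi_i$ is admissible. Its $\rmK_1$-invariants have dimension at least $m$, because each $\pi_i^{\rmK_1}\neq0$.

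The remaining issue is to bound $\dim(\bigoplus_{i\le m}\pi_i)^{\rmK_1}$ uniformly in terms of $\pi$. The main obstacle is exactly this: invariants are not right exact. My plan is to use the quotient-stability of admissibility in its quantitative form via Pontryagin duality. The dual $\pi^\vee$ is a finitely generated module over $\F\DB{\rmK_1}$, and the dual of the quotient $\bigoplus_{i\le m}\pi_i$ is a submodule of $\pi^\vee$. Since $\F\DB{\rmK_1}$ is Noetherian, this submodule is finitely generated, but I also need a uniform bound on its rank. To get one, I would look at the $\rmK_1$-coinvariants side. Dually, $(\pi_i)^{\rmK_1}$ being non-zero means $\pi_i^\vee/\fm\pi_i^\vee\neq0$, where $\fm$ is the maximal ideal. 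For a submodule $M=\bigoplus_{i\le m}\pi_i^\vee\subset\pi^\vee$, we have $\dim M/\fm M\ge m$. A finitely generated module over the Noetherian local ring $\F\DB{\rmK_1}$ can have submodules with arbitrarily many generators, so this alone does not yield a contradiction.

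I would therefore combine it with the $\rmG$-structure. The $\pi_i$ are irreducible, so $\pi_i^\vee$ is generated as a $\rmG$-module by any non-zero element. Since $\pi^{\rmK_1}$ is finite-dimensional and $\rmK$ acts on it, it has finitely many $\rmK$-socle constituents. I would first try to show that the natural map $\pi^{\rmK_1}\to\bigoplus_{i\le m}\pi_i^{\rmK_1}$ has image meeting each summand non-trivially; this is the step I am least sure of. If that holds, then $\dim\pi^{\rmK_1}\ge m$ for every $m$, which contradicts admissibility.
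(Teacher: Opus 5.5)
Your proposal has a genuine gap at the end. The claim that the image of $\pi^{\rmK_1}\to\bigoplus_{i\le m}\pi_i^{\rmK_1}$ meets every summand non-trivially is never proved, and nothing you set up implies it. For a fixed compact open subgroup, the map $\pi^{\rmK_1}\to\pi_i^{\rmK_1}$ induced by $\pi\epi\pi_i$ need not be non-zero; this is exactly the failure of right exactness you point out yourself. A vector of $\pi$ lifting a non-zero element of $\pi_i$ is only fixed by some smaller open subgroup depending on $i$, so you lose the uniformity you need. Step 1 and the appeal to Lemma \ref{lem:subquot} are harmless but do not help. Admissibility of each $\pi_i$ gives no bound of $\dim(\bigoplus_{i\le m}\pi_i)^{\rmK_1}$ in terms of $\dim\pi^{\rmK_1}$. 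As written, the argument does not close.

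The missing idea is that you never need a bound on numbers of generators; you were one line away from the paper's argument. You correctly showed $\pi/(N_1\cap\cdots\cap N_m)\simeq\bigoplus_{i\le m}\pi_i$. Hence $N_1\supsetneq N_1\cap N_2\supsetneq\cdots$ is a strictly decreasing chain of subrepresentations, strict because the quotients have lengths $1,2,3,\dots$. Dually, the submodules $M_m:=(\pi/(N_1\cap\cdots\cap N_m))^\vee$ form a strictly increasing chain of $\F\DB{\rmK_1}$-submodules of $\pi^\vee$. If $\pi$ were admissible, $\pi^\vee$ would be finitely generated over the Noetherian ring $\F\DB{\rmK_1}$ (or $\F\DB{\rmK}$), hence a Noetherian module, and this chain would have to stabilize. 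That is the contradiction.

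Your remark that a finitely generated module can have submodules needing arbitrarily many generators is true but beside the point. The point is that these submodules are nested and strictly increasing. This is precisely the paper's proof. It uses Emerton's criterion ($\pi$ admissible iff $\pi^\vee$ is finitely generated over the Noetherian ring $\F\DB{\rmK}$) and observes that $\pi^\vee$ contains $\bigoplus_{i\in S}\pi_i^\vee$ for every finite $S$, violating the ascending chain condition.
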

\begin{proof}
    By \cite[Theorem 2.1.2]{Emerton-ordinaryI}, $\F\DB{G(\cO_K)}$ is Noetherian. By Lemma 2.2.11 in \loccit, $\pi$ is admissible if and only if its $\F$-linear dual $\pi^\vee$ is finitely generated over $\F\DB{G(\cO_K)}$. We show that $\pi^\vee$ does not satisfy the ascending chain condition.

    Let $\{\pi_i\}_{i\in I}$ be the set of non-isomorphic irreducible quotients of $\pi$ indexed by $i\in I$. Then $\pi_i^\vee \subset \pi^\vee$. By \cite[Lemma 2.2.7]{Emerton-ordinaryI}, $\pi_i^\vee$'s (equipped with $\rmG$-action) for $i\in I$ are pairwise non-isomorphic. Thus, $\pi^\vee$ contains $\oplus_{i\in S}\pi_i^\vee$ for any finite subset $S\subset I$. Thus, $\pi^\vee$ does not satisfy the ascending chain condition.
    \end{proof}

\subsection{Special alcoves}\label{subsec:sp-alc}
We introduce a condition on alcoves required for our main result. We fix $\sig=F_{(w^\dia,\om)}$ a Serre weight. We need an auxiliary Serre weight $\sig'= F_{(u^\dia,\om)}$ such that
\begin{itemize}
    \item $w=s_\al u$ for some $\al=\sig_{j_0}(\al_{i_0k_0})\in \cup_{\jj}\sig_j(\Phi^+)$;
    \item $u^\dia \uparrow w^\dia$ (in particular, $\sig' \uparrow \sig$); and
    \item $\ell(w^\dia) = \ell(u^\dia)+1$.
\end{itemize}

The pair $(\sig,\sig')$ will be used to define certain explicit local charts of potentially crystalline Emerton--Gee stacks in \S\ref{subsec:chart}. For the application to our main result, we need an extra condition.
\begin{defn}
    We say that $w^\dia$ is \textit{special} if there exists $u^\dia$ and $s_{\al}$ as above such that $\al_{i_0k_0}\notin \Phi^+_M$ for any proper standard Levi subgroup $M\subset G$. We say that a $p$-restricted $p$-alcove $C = (C_j)_{\jj}$ is \textit{special} if $C=w^\dia \cdot C_0^\cJ$ for a special $w^\dia$.
\end{defn}

\begin{rmk}\label{rmk:special}
Let $G=\GL_n$. 
    \begin{enumerate}
        \item For $u$ and $w$ as above, we define
    \begin{align*}
        \tilw := w^{-\dia} \tilw_h^\mo w_0 u^\dia = w^\mo t_{\eta + \nu_{u}- \nu_w} u.
    \end{align*}
    By the conditions on $w^\dia$ and $u^\dia$,  $w^\dia = s_{\al,m} u^\dia$ for some $m\in \Z_{>0}$ and $m_{w^{\dia},\al} = m_{u^\dia,\al}+1$.  
This implies that either
    \begin{itemize}
        \item [(a)] $\nu_w=\nu_u$ and $ \RG{\nu_u,\al^\vee} = \RG{\nu_w, \al^\vee} = m$; or
        \item [(b)] $\nu_w = \nu_u + \al$ and $\RG{\nu_u,\al^\vee} = m-1,  \RG{\nu_w, \al^\vee} = m+1$.
    \end{itemize}
    Therefore, $\tilw = w^\mo t_{\eta} u$ in case (a) and $\tilw = w^\mo t_{\eta -\al} u$ in case (b). These are precisely the two shapes called the first form and the second form, respectively, in \cite[Proposition 2.1.2]{LLMPQ-CL} except that $w$ is replaced by $w^\mo$ therein.  
    As in Remark 2.1.3 in \loccit, case (a) occurs when $w_{j_0}^\mo$ preserves the order of $i_0$ and $k_0$ and maps no element $i\in (i_0,k_0)$ into $(w^\mo_{j_0}(i_0),w^\mo_{j_0}(k_0))$. Case (b) occurs when $w^\mo_{j_0}$ reverses the order of $i_0$ and $k_0$ and maps any element $i\in (i_0,k_0)$ into $(w^\mo_{j_0}(k_0),w^\mo_{j_0}(i_0))$.

\item Take $f=1$ for simplicity. By the previous item, $w^\dia$ is special if either (a) $w$ maps $1$ and $n$ to $t_0$ and $t_0+1$ for some $1 \le t_0 \le n-1$ or (b) $w$ interchanges $1$ and $n$. In particular, $w^\dia$ is special if and only if $(w\del)^\dia$ is special for $\del\in S$. Recall from Remark \ref{rmk:def-S} that $(w\delta)^\dia=w^\dia \delta^\dia$. Thus, the definition of special $p$-alcove does not depend on the choice of $w^\dia$ such that $C=w^\dia \cdot C_0$. We can also see that the number of special $p$-alcoves is $(n-2)!$ if $n\ge 3$ and zero if $n\le 2$. For $f\ge 1$, $C=(C_j)_{\jj}$ is special if $C_j$ is special for some $j\in \cJ$.

    \item For $\delta \in \sig_{j_0}(\RG{(12\dots n)})$, $(w^\dia \delta^\dia, \pi^\mo(\delta^\dia)^\mo \cdot (\omega-\eta)+\eta)$ is another lowest alcove presentation of $\sig$. In this way, we can replace $w$ and $u$ by $w\delta$ and $u\delta$, respectively. Then $\tilw$ is replaced by $\delta^{-\dia} \tilw \delta^{\dia}$. For $\tilw$ in case (b), we can choose an appropriate $\delta$,~e.g.~$\del=\sig_{j_0}((12\dots n)^{n-w^\mo_{j_0}(i_0)+1})$, so that $\del^{-\dia} \tilw \del^\dia$ is in case (a). We use this to assume that our $\tilw$ is in case (a). This is not strictly necessary, but it allows us not to repeat the arguments twice when referring \cite{LLMPQ-CL}.
    \end{enumerate}
\end{rmk}

\section{Local model theory}
From now on, we fix $G=\GL_n$. We first review the Emerton--Gee stack and local models. For applications, the key point is that extremal or colength one local charts of potentially crystalline stacks can be described in terms of explicit affine schemes under local model diagrams; this is explained in \S\ref{subsec:LM}. We show in \S\ref{subsub:back-to-LM} that the diagram holds for $(2n-3)$-generic $\tau$. In \S\ref{subsub:special-fiber}, we explain how to compare local charts for potentially crystalline stacks with different $\tau$, which is crucial for our main technical result (Proposition \ref{prop:int}). Everything in \S\ref{sub:review} is an application of the local model theory in \cite{LLLMlocalmodel} to a specific setting, and we do not claim any originality. \S\ref{subsec:chart} is devoted to our main technical result, proved using the explicit local charts studied in \cite{LLMPQ-CL}. Finally, \S\ref{sub:satake} explains the meaning of this result in terms of Hecke operators.

\subsection{Review of the Emerton--Gee stack and local models}\label{sub:review}
We closely follow \cite{LLLMlocalmodel} to quickly review the Emerton--Gee stack and the local model theory used in \S\ref{subsec:chart}.

\subsubsection{The Emerton--Gee stack}
Let $\cX_n$ be the moduli stack of rank $n$ projective \'etale $\pgma$-modules constructed in \cite{EGstack}. It is a Noetherian formal algebraic stack over $\Spf \cO$. For a tame inertial type $\tau$, there exists a closed $p$-adic formal substack $\cX^{\eta,\tau}_n\subset \cX_n$ whose $R$-points for any finite flat $\cO$-algebra $R$ parameterize lattices in potentially crystalline representations of Hodge type $\eta$ and inertial type $\tau$ over $R$. Let $\cX_{n,\red}$ be the reduced substack of $\cX_n$. It is a finite type algebraic stack over $\F$ equidimensional of dimension $\frac{n(n-1)f}{2}$. Its irreducible components are naturally labeled by Serre weights of $G(k)$ \cite[Theorem 6.5.1]{EGstack}. We denote by $\cC_\sig$ the component corresponding to a Serre weight $\sig$ following the normalization in \cite[\S7.4]{LLLMlocalmodel}. If we write $\sig=F(\lam)$ for some $\lam \in X^*(T)_1^\cJ$, then $\cC_{\sig}$ is characterized as the closure of the locus of Galois representations $\rhobar$ of the form
\begin{align}\label{eqn:ord-rhobar}
    \rhobar = \pma{\chi_1 & * & \cdots & * \\ 
     0 & \chi_2 & \cdots & * \\ 
     \text{\rotatebox{90}{$\cdots$}} &  & \text{\rotatebox{135}{$\cdots$}} &  \text{\rotatebox{90}{$\cdots$}} \\ 
     0 & \cdots & 0 & \chi_n}
\end{align}
where
\begin{enumerate}
    \item $\rhobar$ is maximally non-split of niveau $1$, i.e.~it admits a unique $G_K$-stable flag;
    \item $\rhobar^\ss|_{I_K} = \oplus_{i=1}^{n}\chi_i|_{I_K} = \prod_{\jj}\oom_{K,\sig_j}^{(\lam+\eta)_j}$ as $T^\vee(\F)$-valued representations;
    \item If $\chi_i\chi_{i+1}^\mo |_{I_K} = \ov{\veps}$, then $\lam_{j,i} - \lam_{j,i+1} = p-1$ for all $\jj$ if and only if  $\chi_i\chi_{i+1}^\mo= \ov{\veps}$ and the element of $\Ext^1_{G_K}(\chi_i,\chi_{i+1})$ determined by $\rhobar$ is tr\`es ramifi\'ee. Otherwise, $\lam_{j,i} - \lam_{j,i+1}  = 0$ for all $\jj$.
\end{enumerate}

\subsubsection{Local model diagram}\label{subsec:LM}
In \cite{LLLMlocalmodel}, the authors developed a local model theory describing open charts of $\cX^{\eta,\tau}_n$ explicitly for sufficiently generic $\tau$. They define an open cover $\{\cX^{\eta,\tau}_{n}(\tilz)\}_{\tilz}$ of $\cX^{\eta,\tau}_n$ labeled by $\tilz\in \Adm^\vee(\eta)^\cJ$. For each $\tilz$, they define an explicit affine scheme $U^{\eta,\nblat}(\tilz)$ over $\cO$ with a local model diagram
\begin{equation}\label{eqn:LM}
\begin{tikzcd}
    & T^{\vee,\cJ} \times_{\Spec \cO} U^{\eta,\nblat}(\tilz)^\pcp \arrow[ld] \arrow[rd] \\
    \cX^{\eta,\tau}_n(\tilz) & & U^{\eta,\nblat}(\tilz)^\pcp.
\end{tikzcd}
\end{equation}
Here, $\pcp$ denotes the $p$-adic completion and both diagonal arrows are $T^{\vee,\cJ}$-torsors \cite[Theorem 7.3.2]{LLLMlocalmodel}. In \loccit, an inexplicit genericity condition on $\tau$ is required. As we are only interested in $\tilz$ either extremal or colength one, we explain why the above diagram holds for $(2n-3)$-generic $\tau$.  

We give a definition of $U^{\eta,\nblat}(\tilz)$ in \S\ref{subsub:back-to-LM}. For now, we mention that it is an affine scheme inside $\cJ$-fold product of the loop group functor $L\cG$: for a Noetherian $\cO$-algebra $R$, 
\begin{align*}
    L\cG : R \mapsto L\cG(R):=\{ A\in \GL_n(R\DP{v+p}) \mid A \mod v \text{ is upper triangular}\}.
\end{align*}

When $\tau$ is $(n+1)$-generic, there is a closed immersion of $\cX^{\eta,\tau}_n$ into the moduli of rank $n$ Breuil--Kisin modules $Y^{\le\eta,\tau}$ of height $\le \eta$ with tame descent data $\tau$, and its image is identified with the locus of Breuil--Kisin modules of height $\eta$ satisfying the monodromy condition \cite[Proposition 7.2.3]{LLLMlocalmodel}. There is an open substack $Y^{\le\eta,\tau}(\tilz)\subset Y^{\le\eta,\tau}$ of Breuil--Kisin modules admitting $\tilz$-gauge basis. Then $\cX^{\eta,\tau}_n(\tilz)$ is defined by the inverse image of $Y^{\le\eta,\tau}(\tilz)$ in $\cX^{\eta,\tau}_n$.

For $(n+1)$-generic $\tau$, there is a local model diagram for $Y^{\le\eta,\tau}$ (Theorem 5.3.3 in \loccit):
\begin{equation}\label{eqn:LM-BK}
\begin{tikzcd}
    & Y^{\le\eta,\tau,\be}(\tilz) \simeq T^{\vee,\cJ}\times {U}^{\le\eta}(\tilz)^\pcp \arrow[rd] \arrow[ld] & \\
    Y^{\le\eta,\tau}(\tilz) & & U^{\le\eta}(\tilz)^\pcp
\end{tikzcd}
\end{equation}
where $Y^{\le\eta,\tau,\be}(\tilz)$ parameterizes Breuil--Kisin modules $\fM$ in $Y^{\le\eta,\tau}(\tilz)$ together with a gauge basis $\be$. The left diagonal arrow is forgetting $\be$ and is a $T^{\vee,\cJ}$-torsor. The scheme ${U}^{\le\eta}(\tilz)$ is also contained in $L\cG^\cJ$, and the top isomorphism is given by taking $(\fM,\be)$ to its tuple of Frobenius matrices $(A_{\fM,\be}\ix{j})_{\jj}$. The right diagonal arrow is given by projection.

There is a $p$-adic formal subscheme ${U}^{\eta,\nbl_\infty}(\tilz)\subset {U}^{\le\eta}(\tilz)^\pcp$ such that $T^{\vee,\cJ}\times {U}^{\eta,\nbl_\infty}(\tilz)$ is the pullback of $\cX^{\eta,\tau}(\tilz)$ along the left diagonal arrow in \eqref{eqn:LM-BK}. Then \eqref{eqn:LM} is induced by an isomorphism between ${U}^{\eta,\nbl_\infty}(\tilz)$ and ${U}^{\eta,\nblat}(\tilz)^\pcp$ obtained by applying Elkik's approximation theorem which requires an inexplicit genericity. This also requires the fact that ${U}^{\eta,\nblat}(\tilz)$ is irreducible, proven in \cite[Theorem 3.7.1]{LLLMlocalmodel} under inexplicit genericity. When $\tilz$ is extremal or colength one, ${U}^{\eta,\nblat}(\tilz)$ can be computed directly and is irreducible. We can also perform Elkik's approximation by hand in this case.

\subsubsection{Explicit local charts}\label{subsub:chart}
Recall the functor $L\cG$ defined above. We introduce various subfunctors of $L\cG$: for a Noetherian $\cO$-algebra $R$,
\begin{enumerate}
    
    \item $L^+\cG: R \mapsto \{A\in G(R\DB{v+p}) \mid A \mod v \in B(R)\}$ (the positive loop group);
    
    \item $L^{[0,n-1]}\cG : R \mapsto \{A \in L\cG(R) \mid A, v^{n-1} A^\mo \in \Lie G(R\DB{v+p})\}$;
    \item For $\bfa \in \cO^n$, which we view as an $n\times n$ diagonal matrix,
    \begin{align*}
        L\cG^{\nbla}: R \mapsto \{A \in L\cG(R) \mid (v+p)(v\frac{dA}{dv} A^\mo + A \bfa A^\mo) \in \Lie G(R\DB{v+p}) \text{ and is upper triangular mod $v$}\};
    \end{align*}
\end{enumerate}
If $X\subset L\cG$ is a subfunctor, we write $X^{\nbla}:= X \cap L\cG^{\nbla}$. We put a subscript $E$ or $\F$ to denote the base change of these functors to $E$ or $\F$.

We define $\Gr_\cG$ to be the fpqc quotient $[L^+\cG \backslash L\cG]$ and $\Gr_\cG^{[0,n-1]}:= [L^+\cG \backslash L\cG^{[0,n-1]}]$. The latter is a projective scheme over $\cO$. Let $M^{\le\eta}$ be the Pappas--Zhu local model associated to the group $\GL_n$, the conjugacy class of $\eta$, and the Iwahori subgroup \cite{PZ13-Inv-local_model-MR3103258}. It is a projective subscheme of   $\Gr_{\cG}^{[0,n-1]}$. For $\tilz \in \Adm^\vee(\eta)$, $U^{\le\eta}(\tilz)\subset L\cG$ is an affine scheme and it defines an open subscheme of $M^{\le\eta}$ under the natural map $L\cG \ra \Gr_{\cG}$. Then $U^{\le\eta,\nbla}(\tilz)$ is defined by the $\cO$-flat part of $U^{\le\eta}(\tilz)^{\nbla}$. For the cases of our interests, we have the following explicit description.

\begin{prop}\label{prop:chart}
    Suppose that $\bfa$ is $n$-generic (as defined in \cite[\S4.2]{LLLMlocalmodel}).
    \begin{enumerate}
        \item If $\tilz$ is extremal, then $U^{\le\eta,\nbla}(\tilz)$ is an affine space over $\cO$ of relative dimension $n(n-1)/2$.
        \item If $\tilz$ is of colength one, then $U^{\le\eta,
    \nbla}(\tilz)$ is an affine space over $\cO[c,Z_{-\al}]/(c Z_{-\al}-p)$ of relative dimension $n(n-1)/2 -1$.
    \end{enumerate}
    In particular, $U^{\le\eta,\nbla}(\tilz)$ is normal in both cases.
\end{prop}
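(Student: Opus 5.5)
We reduce the statement to a product over the embeddings and then compute each factor by hand. The scheme $U^{\le\eta,\nbla}(\tilz)$ sits inside $L\cG^\cJ$, and both the degree/determinant condition defining $U^{\le\eta}(\tilz)$ and the monodromy condition $\nbla$ are imposed coordinatewise in $\jj$. So it suffices to treat a single factor $\tilz_j\in\Adm^\vee(\eta)$. Here $\tilz_j$ is extremal (resp.~of colength one), and $\bfa_j$ is $n$-generic. Normality then follows at the end, since a product of normal, $\cO$-flat, finite type schemes that are smooth or of the form $\cO[c,Z_{-\al}]/(cZ_{-\al}-p)$ times an affine space is normal: the ring $\cO[c,Z]/(cZ-p)$ is a regular two-dimensional ring, hence normal, and polynomial rings over it stay normal. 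When several $\tilz_j$ have colength one, one gets a tensor product of such rings, which is still regular.

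For the single-embedding computation, we start from the explicit description of $U^{\le\eta}(\tilz)$ in \cite[\S3]{LLLMlocalmodel}. Write $\tilz=w t_\nu$ in $\tilW^\vee$. A point of $U(\tilz)$ is a matrix $A=D\,\tilz$-shaped, with entries $A_{ik}$ polynomials in $v+p$. Their degrees are bounded by explicit functions of $(i,k)$ and $\tilz$, and the leading coefficients are prescribed.

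In the extremal case $\tilz=t_{w(\eta)}$, the height condition $\det A = (\text{unit})(v+p)^{\binom n2}$ forces $U^{\le\eta}(\tilz)$ to be the affine space of "lower order" coefficients. It is a translate of a unipotent orbit, of dimension $\ell(t_\eta)=n(n-1)/2$. We then impose $\nbla$. Following the argument of \cite[Prop.~4.2.x]{LLLMlocalmodel}, we expand $(v+p)(v\,dA/dv\,A^{-1}+A\bfa A^{-1})$ and observe that the integrality condition is automatic in the top degree. In each root coordinate, it imposes one linear equation that determines the $(v+p)$-coefficients in terms of the constant terms. The coefficient of the unknown is $a_i-a_k+(\text{integer between }0\text{ and }n-1)$, which is a unit by $n$-genericity. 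Solving these triangularly in the height of roots leaves exactly the free constant terms, so the solution set is $\A^{n(n-1)/2}_\cO$. This locus is $\cO$-flat, so it agrees with the flat closure.

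In the colength one case, $\tilz=t_{w(\eta)}s_{\al,m}$-type, and there is exactly one root entry $-\al$ where the degree drops. The corresponding entry now carries two coordinates, $c$ (leading) and $Z_{-\al}$, subject to the determinant relation $cZ_{-\al}=p$, since the $2\times 2$ minor involving $\al$ has the form $\smqty(c & \ast\\ v+p\ldots & Z)$. We then perform the same triangular elimination of the $\nbla$-equations, again using $n$-genericity to invert the coefficients. This eliminates all remaining higher coefficients and leaves free variables numbering $n(n-1)/2-1$ over $\cO[c,Z_{-\al}]/(cZ_{-\al}-p)$.

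The main obstacle is the $\cO$-flat part. Before saturating, the naive equations may have extra $p$-torsion components, because in the colength one case some $\nbla$-equations have coefficients divisible by $c$ or $Z_{-\al}$ rather than units. We handle this by checking that, after inverting $p$, the solution set is the generic fibre of the claimed scheme, so that the flat closure is exactly $\A^{N}$ over $\cO[c,Z]/(cZ-p)$. That scheme is already $\cO$-flat and has the right generic fibre, which forces equality. Alternatively, this case can be read off from the computations in \cite[\S2]{LLMPQ-CL}.
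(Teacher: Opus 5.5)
\textbf{Verdict: genuine gap.} Your outline (compute each chart by hand, eliminate coefficients triangularly using $n$-genericity, then identify the $\cO$-flat part) has the shape of the computation in \cite{LLMPQ-CL}, which the paper simply cites. As written, however, the mechanics are wrong in ways that break the argument.

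\emph{Dimension count.} The naive chart $U^{\le\eta}(\tilz)$ has relative dimension
$\dim\Gr^{\le\eta}_E=\ell(t_\eta)=\sum_{i<k}(k-i)=\binom{n+1}{3}$.
This equals $n(n-1)/2$ only when $n=2$. So $\nbla$ must eliminate $\binom{n}{3}$ parameters, and your ``one equation per root entry'' bookkeeping does not add up.

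\emph{Direction of elimination.} The elimination runs the other way from what you propose. The free coordinates are the top-degree coefficients $c_\be$, which is how they are used in \S\ref{subsec:chart}, and the lower coefficients are solved in terms of them. This is already visible in a toy $2\times2$ block $A=\pma{1 & x+y(v+p)\\ 0 & (v+p)^2}$ with $\bfa=\mathrm{diag}(a_1,a_2)$. There, integrality of $(v+p)\bigl(v\tfrac{dA}{dv}A^{-1}+A\bfa A^{-1}\bigr)$ is equivalent to $(a_2-a_1)x=py$. The unit coefficient sits on the constant term $x$, while the $(v+p)$-coefficient $y$ appears only multiplied by $p$. Solving for the $(v+p)$-coefficients in terms of the constant terms therefore requires dividing by $p$, and does not produce an affine space over $\cO$.

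\emph{The functions $c$ and $Z_{-\al}$.} In the colength one case, $c$ and $Z_{-\al}$ are the specific functions of \cite[Lemma 4.1.3, Proposition 4.3.2]{LLMPQ-CL}. The function $c$ is already a coordinate on $U^{\le\eta}(\tilz)$. By contrast, $Z_{-\al}$ is an $\bfa$-dependent function whose restriction to $V(c)$ is given by \eqref{eqn:Z}. The relation $cZ_{-\al}=p$ comes out of the monodromy equations, not from a $2\times2$ minor of the naive chart. The explicit form of $Z_{-\al}$ is exactly what Lemma \ref{lem:int} later uses, so a heuristic pair ``$c$, $Z$'' would prove at most a weaker statement about unspecified functions.

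\emph{The flat part.} Your treatment assumes the key point: that after inverting $p$ the naive monodromy locus coincides with the generic fibre of the explicit scheme. The paper avoids this entirely. It takes the closed embeddings of \cite[Propositions 4.3.1 and 4.3.2]{LLMPQ-CL} and observes that they are isomorphisms. The reason is that the target is normal, hence integral, and has the same relative dimension $n(n-1)/2$ as the $\cO$-flat source.

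\emph{Normality.} $\cO[c,Z_{-\al}]/(cZ_{-\al}-p)$ is not regular when $E/\Qp$ is ramified: it has an $xy=z^e$ singularity. Tensor products of several copies are not regular at the origin either. These rings are still normal, being complete intersections that are regular in codimension one, so your conclusion survives with this corrected reason. Your reduction to a single embedding is unnecessary, since the statement already concerns a single $\tilz$ and $\bfa$.
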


Note that the functions $c,Z_{-\al} \in \cO(U^{\le\eta,\nbl_{\bfa}}(\tilz))$ are explicitly defined in \cite[Proposition 4.3.2]{LLMPQ-CL}.

\begin{proof}
See \cite[Proposition 4.3.1 and 4.3.2]{LLMPQ-CL}. Note that the closed embeddings in \loccit~are isomorphisms because the target is normal and the source and the target have the same dimension.
\end{proof}

\begin{rmk}
    When $p>n-1$, for any $\bfa\in \cO^n$ and $\tilz\in \Adm^\vee(\eta)$, $U^{\le\eta,\nbla}(\tilz)\times_{\Spec \cO} \Spec E$ contains a unique connected component of dimension $n(n-1)/2$, given by its intersection with the Schubert cell
    \begin{align*}
        S^\circ_E(\eta)= L^+\cG_E \backslash L^+\cG_E (v+p)^\eta L^+\cG_E
    \end{align*}
    (see \cite[Proposition 3.3.4]{LLLMlocalmodel}). Then $U^{\eta,\nbla}(\tilz)$ is defined by the closure of such a component in $U^{\le\eta,\nbla}(\tilz)$. In the context of Proposition \ref{prop:chart}, $U^{\le\eta,\nbla}(\tilz)\times_{\Spec \cO} \Spec E$ is already connected and thus $U^{\eta,\nbla}(\tilz) = U^{\le \eta,\nbla}(\tilz)$. 
\end{rmk}

\subsubsection{Back to the local model diagram}\label{subsub:back-to-LM}
Let $\tau=\tau(s,\mu+\eta)$ be a tame inertial type. We define $\bfa_\tau=(\bfa_{\tau,j})_{\jj}$ as in \cite[\S7.3]{LLLMlocalmodel}. Then $\bfa_{\tau,j}\in \cO^n$ and $\bfa_{\tau,j} \equiv s_j^\mo(\mu_j+\eta_j) \mod \varpi \in \F_p^n$. 
For $\tilz= (\tilz_j)_{\jj}\in \Adm^\vee(\eta)^\cJ$, we define
\begin{align*}
    U^{\eta,\nblat}(\tilz) := \prod_{\jj}U^{\eta,\nbl_{\bfa_{\tau,j}}}(\tilz_j)\subset M^{\le\eta,\cJ}.
\end{align*}
Recall $U^{\eta,\nbl_\infty}(\tilz)$ from \S\ref{subsec:LM}. If $\tau$ is $(2n-3)$-generic, then Proposition 7.1.10 in \loccit~implies that
\begin{align}\label{eqn:modp2}
    U^{\eta,\nbl_\infty}(\tilz)\times_{\Spf \cO}\Spec \cO/p^{2} \subset U^{\eta,\nblat}(\tilz).
\end{align}
Furthermore, assume that $\tilz_j$ is either extremal or of colength one for all $\jj$. Using $\cO$-flatness of $U^{\eta,\nbl_\infty}(\tilz)$ and the explicit description of $U^{\eta,\nblat}(\tilz)$ (Proposition \ref{prop:chart}), we can lift the above inclusion to a morphism
\begin{align*}
    U^{\eta,\nbl_\infty}(\tilz) \mono U^{\eta,\nblat}(\tilz)^\pcp
\end{align*}
which is a closed immersion by $p$-completeness of the source and the target. To be precise, we need to construct a ring map 
\begin{align*}
    \cO(U^{\eta,\nblat}(\tilz)^\pcp) \ra \cO(U^{\eta,\nbl_\infty}(\tilz))
\end{align*}
lifting the ring map induced by \eqref{eqn:modp2}. We can send free variables in $\cO(U^{\eta,\nblat}(\tilz))$ to any lift of their images under \eqref{eqn:modp2}. For $c$ and $Z_{-\al}$, we can find lifts of their images, denoted by $x$ and $y$, satisfying $xy=p+p^2z$ for some $z\in \cO(U^{\eta,\nbl_\infty}(\tilz))$. Since $1+pz$ is invertible, we can map $c$ and $Z_{-\al}$ to $x$ and $y/(1+pz)$. Then the constructed morphism is an isomorphism because its source and target have the same dimension and the target is normal. This justifies the existence of the diagram \eqref{eqn:LM} for $\tilz$ of colength at most one.

\subsubsection{Special fiber of potentially crystalline stacks}\label{subsub:special-fiber} 
Since $\cX^{\eta,\tau}_{n,\F}$ is an algebraic stack over $\F$ of dimension $n(n-1)f/2$, it is topologically a union of irreducible components $\cC_\sig$. If we assume $\tau$ to be $(5n-1)$-generic, these irreducible components are precisely $\cC_\sig$ for $\sig\in \JH(\osig(\tau))$ \cite[Theorem 7.4.2]{LLLMlocalmodel}. We only need weaker results that hold under $(2n-3)$-genericity of $\tau$ (see Lemma \ref{lem:chart-extremal} and \ref{lem:chart-CLone}). We record some preliminary results here.

If we apply the base change along $\cO\epi \F$, $L\cG$ becomes the loop group $LG_\F$, $L^+\cG$ becomes the Iwahori subgroup $\cI_\F$, and $\Gr_\cG$ becomes the affine flag variety $\Fl:=[\cI_\F\bss LG_\F]$. Let $\cI_{1,\F}$ be the pro-$v$ Iwahori subgroup. Then $\tld{\Fl}:=[\cI_{1,\F}\bss LG_\F]$ is a $T^{\vee}$-torsor over $\Fl$. For $\bfa\in \cO^n$, we have $\Fl^{\nbla}:= [\cI_\F\bss LG_\F^{\nbla}]$ and $\tld{\Fl}^{\nbla}:= [\cI_{1,\F}\bss LG_\F^{\nbla}]$. If $X\subset \Fl$ is a subscheme, we write $X^{\nbla} = X\cap \Fl^{\nbla}$ and write $\tld{X}$ for the pullback of $X$ along $\tld{\Fl} \ra \Fl$. Note that $M^{\le\eta}_\F$ is contained in $\Fl$.

For $\tilz=t_\nu w\in \tilW^\vee$, we have the Schubert cell $S^\circ_\F(\tilz) := \cI_\F\backslash \cI_\F v^\nu w \cI_\F$, and we define $S^{\nbla}_\F(\tilz)$ to be the closure of $S^\circ_\F(\tilz)^{\nbla}$ in $\Fl^{\nbla}$. 

\begin{prop}[{\cite[Corollary 4.2.6]{LLLMlocalmodel}}]\label{prop:irr(M)}
    Suppose that $\tau$ is $(n-1)$-generic. Then there is a natural bijection between $\Adm^{\reg,\vee}(\eta)^\cJ$ and the set of top-dimensional irreducible components of $M^{\le\eta,\nblat}_\F$ given by $\tilz\mapsto S^{\nblat}_\F(\tilz)$. 
\end{prop}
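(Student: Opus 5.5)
The plan is to reduce to a single embedding, stratify the special fiber by affine Schubert cells, and compute the dimension of the monodromy locus cell by cell. Since $M^{\le\eta,\nblat}_\F=\prod_{\jj}M^{\le\eta,\nbl_{\bfa_{\tau,j}}}_\F$ and the irreducible components of a product of finite type $\F$-schemes are products of irreducible components, it suffices to treat one factor. So fix $\bfa\in\cO^n$ whose reduction is $(n-1)$-generic. I will show that the top-dimensional irreducible components of $M^{\le\eta,\nbla}_\F$ are exactly the $S^{\nbla}_\F(\tilz)$ for $\tilz\in\Adm^{\reg,\vee}(\eta)$.

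First I use the Pappas--Zhu description of the special fiber: $M^{\le\eta}_\F\subset\Fl$ is the union of the Schubert varieties for $\tilz\in\Adm^\vee(\eta)$. Set-theoretically it is therefore the disjoint union of the cells $S^\circ_\F(\tilz)$, and hence
\begin{align*}
M^{\le\eta,\nbla}_\F=\coprod_{\tilz\in\Adm^\vee(\eta)}S^\circ_\F(\tilz)^{\nbla}.
\end{align*}
The proposition then reduces to two claims about each cell. The first claim is that $S^\circ_\F(\tilz)^{\nbla}$ is irreducible; I expect it to be an affine space. The second claim is that its dimension is at most $n(n-1)/2$, with equality if and only if $\tilz$ is regular. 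Granting both, the top-dimensional components are the closures $S^{\nbla}_\F(\tilz)$ for regular $\tilz$. These are pairwise distinct because each contains a dense open subset of a different cell.

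To compute each stratum, I parametrize $S^\circ_\F(\tilz)$ for $\tilz=t_\nu w$ by $A=Y v^\nu w$. Here $Y$ runs over the product of affine root subgroups $u_{\al}(x v^k)$ in $\cI_\F$ indexed by the affine roots separating the base alcove from its translate, which gives an affine space of dimension $\ell(\tilz)$. Then I write out the monodromy condition that $v\frac{dA}{dv}A^\mo+A\bfa A^\mo$ has at most a simple pole and is upper triangular mod $v$. Modulo $p$ we have $v+p=v$, so this becomes a condition on the pole orders of a matrix Laurent polynomial in $v$.

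I expand this condition root by root, ordering the coordinates of $Y$ compatibly with height or with a filtration by $v$-degree. The equation attached to a coordinate $x$ of $u_\al(xv^k)$ should then have the form
\begin{align*}
\bigl(k+\RG{\bfa,\al^\vee}\bigr)x+(\text{polynomial in earlier coordinates})=0.
\end{align*}
The coefficient is nonzero mod $p$ because $\bfa$ is $(n-1)$-generic and $\abs{k}\le n-1$ for every affine root occurring in $\Adm^\vee(\eta)$. Triangularity then lets me solve recursively for the constrained coordinates. As a result the stratum is an affine space whose dimension is the number of unconstrained coordinates.

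This leaves a purely combinatorial count: the number of affine root coordinates of $\tilz$ whose monodromy equation is vacuous, meaning the term already lies in the allowed pole range. I would express it through the numbers $m_{\tilz,\al}$ of Remark~\ref{rmk:superadd}. Using $\ell(\tilz)=\sum_{\al\in\Phi^+}\abs{m_{\tilz,\al}}$ and the fact that for $\tilz\le t_{w(\eta)}$ each root contributes at most one free parameter, the count equals $\#\Phi^+=n(n-1)/2$ minus the number of simple roots $\al$ with $m_{\tilz,\al}=0$. That is exactly the defect from being regular.

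I expect this dimension count to be the main obstacle. It requires showing both that the recursive system really is triangular, which depends on the chosen ordering, and that the free coordinates number precisely $n(n-1)/2$ in the regular case. If the direct count proves messy, my fallback is a different route. The extremal case $\tilz=t_{w(\eta)}$ follows from Proposition~\ref{prop:chart}(1), and I would then argue by induction along the Bruhat order inside $\Adm^\vee(\eta)$, using that passing from $\tilz$ to a regular codimension-one element of the admissible set changes the free coordinates by exactly one.
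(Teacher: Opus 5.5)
Your outline (stratify $M^{\le\eta}_\F$ by the cells $S^\circ_\F(\tilz)$, $\tilz\in\Adm^\vee(\eta)$; show each $S^\circ_\F(\tilz)^{\nbla}$ is an affine space by triangular elimination; read off which strata have dimension $n(n-1)/2$) is the argument behind \cite[Corollary 4.2.6]{LLLMlocalmodel}. The paper gives no proof of its own and only quotes that corollary. There is a slip in your parametrization. Since $\Fl=[\cI_\F\backslash LG_\F]$, every $A=Yv^\nu w$ with $Y\in\cI_\F$ defines the same point, and the left-$\cI_\F$-invariant monodromy condition then becomes vacuous. You need $A=\tilz N$, where $N$ is a product of $U_a(x_a)$ over affine roots $a>0$ with $\tilz(a)<0$ (the paper's $\tilz N_{\tilz}$). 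The condition then reads $\Ad_{\tilz}\bigl(v\tfrac{dN}{dv}N^{-1}+\Ad_N(\bfa)-\bfa\bigr)\in v^{-1}\Lie\cI$. The coordinate $x_a$ is free exactly when the negative affine root $\tilz(a)$ already lies in $v^{-1}\Lie\cI$, i.e.\ when the hyperplane $\tilz(H_a)$ is one of the two walls of the strip through the base alcove in its direction. Otherwise its equation solves for $x_a$, as you describe.

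The genuine gap is the dimension count, which is the whole content of the statement. In a direction $\gamma\in\Phi^+$ the separating hyperplanes are $\abs{m}$ consecutive ones starting at a wall of that strip, where $m=m_{\tilw,\gamma}$ with $\tilz=\tilw^*$; these counts are unchanged by $*$. So exactly one coordinate is free if $m\neq0$ and none if $m=0$, giving
$\dim S^\circ_\F(\tilz)^{\nbla}=\#\{\gamma\in\Phi^+ : m_{\tilw,\gamma}\neq 0\}$.
This is not $\#\Phi^+$ minus the number of simple roots with $m=0$. Your formula already fails for $n=3$ at the central element $t_{(1,1,1)}\in\Adm(\eta)$: its cell is a point, yet you predict dimension $1$. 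Hence the full-dimensional strata are exactly those with $m_{\tilw,\gamma}\neq0$ for all $\gamma\in\Phi^+$, and this is the regularity under which the statement holds. With a condition on simple roots only (as the wording in \S2.1 might suggest, and as your count is built to match), the statement is false. For $n=3$, take $\tilw=t_{(2,0,1)}w$ with $w(1)=2$, $w(2)=3$, $w(3)=1$. It lies in $\Adm(\eta)$ and has $(m_{\tilw,\al_1},m_{\tilw,\al_2},m_{\tilw,\al_1+\al_2})=(1,-1,0)$. Thus $\ell(\tilw)=2<3$, and its stratum cannot be a component of dimension $3$. Altogether, $13$ of the $25$ elements of $\Adm(\eta)$ for $\GL_3$ are nonzero on both simple roots, but only $9$ are nonzero on all of $\Phi^+$, matching $\#\JH(\osig(\tau))=9$ for generic $\tau$. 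Your Bruhat-order fallback has the same defect: passing from $t_\eta$ to a regular colength-one element leaves the dimension equal to $\#\Phi^+$, rather than changing it by one. So the per-direction count above is the missing step.
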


\begin{rmk}[{cf.~\cite[Remark 5.2.5]{LLLMlocalmodel}}]\label{rmk:S-in-U}
    If $\tilz \in \Adm^\vee(\eta)$, we have
\begin{align*}
    S_\F^\circ(\tilz)^{\nbla} \subseteq U_\F^{\le\eta,\nbla}(\tilz)
\end{align*}
where we view $U_\F^{\le\eta,\nbla}(\tilz)$ as a subscheme in $\Fl$ using the natural map. When $\bfa$ is $n$-generic and $\tilz$ is extremal, both of them are affine spaces of dimension $n(n-1)/2$ (Theorem 4.2.4 in \loccit~and Proposition \ref{prop:chart}), and thus the equality holds.
\end{rmk}

Suppose that $\tau$ is $(2n-4)$-generic. 
The image of $\cX^{\eta,\tau}_{n,\F}$ in $Y^{\le\eta,\tau}$ is smoothly equivalent to $M^{\le\eta,\nblat}_\F$ under the local model diagram by \cite[Proposition 7.4.1]{LLLMlocalmodel}. Thus, Proposition \ref{prop:irr(M)} gives an upper bound of the set of irreducible components in $\cX^{\eta,\tau}_{n,\F}$. In a special case, we can show that a certain top-dimensional component of $M^{\le\eta,\nblat}_\F$ does give rise to an irreducible component in $\cX^{\eta,\tau}_{n,\F}$. We also explain how to compare local models for $\cX^{\eta,\tau}_{n,\F}$ for different choices of $\tau$.

For a triple $(\tilw_1,\tilw_2,\tils)\in (\tilW^+)^2\times \tilW$, we define $S^{\nblz}_\F(\tilw_1,\tilw_2,\tils)$ to be the closure of $S^\circ_\F((\tilw_2^\mo w_0 \tilw_1)^*)\tils^* \cap \Fl^{\nblz}$ in $\Fl^{\nblz}$. If $\tils\in \tilW$ such that $\tils^*(0) \equiv \bfa \mod p$ and $\tilz = (\tilw_2^\mo w_0 \tilw_1)^*$, then the right multiplication by $\tils$, denoted by $r_{\tils^*}$, embeds $M_\F^{\le\eta,\nbla}$ into $\Fl^{\nblz}$ and maps $S^{\nbla}_\F(\tilz)$ to $S^{\nblz}_\F(\tilw_1,\tilw_2,\tils)$ isomorphically \cite[Proposition 4.3.1]{LLLMlocalmodel}. If $\tils^*(0)$ is $(n-1)$-generic and $\tilw_1,\tilw_2\in\tilW_1$, then for any $w\in W$, we have (see Proposition 4.3.5 and 4.3.6 in \loccit)
\begin{align}\label{eqn:schubert-modification}
    S^{\nblz}_\F(\tilw_1,\tilw_2,\tils) = S^{\nblz}_\F(\tilw_1,e,\tils \tilw_2^\mo w).
\end{align}

For $(2n-4)$-generic $\tau$, we have a commutative diagram (cf.~\cite[Theorem 7.4.1]{LLLMlocalmodel} and the preceding discussion)
\begin{equation}\label{eqn:LM-modp}
    \begin{tikzcd}
    T^{\vee,\cJ}\times U_\F^{\eta,\nblat}(\tilz) \arrow[r, "r_{\tilw^*(\tau)}", hook] \arrow[d] & \tld{M}^{\le\eta,\nblat}_\F \tilw^*(\tau)  \arrow[d] \arrow[r, symbol=\subset] & \tld{\Fl}^{\nblz,\cJ} \arrow[d] \\
    \cX^{\eta,\tau}_{n,\F}(\tilz) \arrow[r, hook] & \Phi\mathrm{\dash Mod}^{\et,n}_{K,\F} & \Fl^{\nblz,\cJ}.
\end{tikzcd}
\end{equation}
Here, $\Phi\mathrm{\dash Mod}^{\et,n}_{K,\F}$ is the moduli stack of \'etale $\varphi$-modules and the bottom horizontal map is given by the restriction of $G_K$-representations to $G_{K_\infty}$ where $K_\infty/K$ is a totally ramified infinite extension obtained by $p$-power roots of $-p$. 
The upshot is that we can compare local models of $\cX_{n,\F}^{\eta,\tau}(\tilz)$ for different choices of $\tau$ and $\tilz$ in $\tld{\Fl}^{\nblz,\cJ}$. Moreover, we can match irreducible components in $\cX^{\eta,\tau}_{n,\F}(\tilz)$ with 
some of $S^{\nblz}_\F(\tilw_1,\tilw_2,\tils)$ in $\Fl^{\nblz,\cJ}$.

Suppose that $\sig$ is $(3n-4)$-deep with lowest alcove presentation $(w^\dia,\om)$. For $s\in W^\cJ$, we define
\begin{align*}
    \tau_s := \tau(\pi^\mo(w)^\mo s w , \om +\pi^\mo(w)^\mo(\nu_w-\eta)),
\end{align*}
which is $(2n-3)$-generic. Then $\sig \in \JH(\osig(\tau_s))$ by \cite[Theorem 1.1]{LLLM-DL}.

\begin{lem}\label{lem:component}
    With the above notation, $\cC_\sig$ is contained in $\cX^{\eta,\tau_s}_{n,\F}$. Moreover, it corresponds to $S^{\nblz}_\F(w^\dia,e,t_\om)$ in $\Fl^{\nbl_0,\cJ}$ under the diagram \eqref{eqn:LM-modp}.  
\end{lem}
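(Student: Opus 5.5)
Two things need to be shown: that $\cC_\sig \subset \cX^{\eta,\tau_s}_{n,\F}$, and that under \eqref{eqn:LM-modp} this component matches $S^{\nblz}_\F(w^\dia,e,t_\om)$. The plan is to locate $\cC_\sig$ inside the extremal chart of $\cX^{\eta,\tau_s}_n$ indexed by $\tilz = \tilw^*(\rhobar,\tau_s)$. Here $\rhobar$ is a generic ordinary Galois representation \eqref{eqn:ord-rhobar} attached to $\sig$.

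First, compute the shape. Take the tame type $\rhobar^\ss|_{I_K}$, with lowest alcove presentation coming from $\sig=F(\pi^\mo(w^\dia)\cdot(\om-\eta))$. Its highest weight plus $\eta$ gives $\tilw(\rhobar)$, which is a translation twisted by $w^\dia$-data. Using \eqref{change-of-LAP}, choose the presentation of $\tau_s$ so that
\[
\tilw(\tau_s)=t_{\om}\,\pi^{-1}(w^{\dia})^{-1}(\text{stuff depending on } s).
\]
Then check that $\tilw(\rhobar,\tau_s)=\tilw(\tau_s)^\mo\tilw(\rhobar)$ equals $t_{w'(\eta)}$ for some $w'\in W^\cJ$, i.e.\ is extremal in $\Adm(\eta)^\cJ$. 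This is a direct manipulation with the formulas $\nu_w$, $w^\dia=t_{\nu_w}w$. The $(3n-4)$-deepness guarantees that $\tau_s$ is $(2n-3)$-generic, and that $\rhobar$ admits a lowest alcove presentation compatible with it.

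Second, apply Remark \ref{rmk:S-in-U}. For extremal $\tilz$ the chart $U^{\eta,\nblat}_\F(\tilz)$ is the full affine space $S^\circ_\F(\tilz)^{\nblat}$ of dimension $n(n-1)/2$ per embedding. Via \eqref{eqn:LM} (valid for extremal $\tilz$ by \S\ref{subsub:back-to-LM}), $\cX^{\eta,\tau_s}_{n,\F}(\tilz)$ is therefore irreducible of dimension $n(n-1)f/2$. Its closure $Z$ in $\cX_{n,\red}$ is thus an irreducible component, so $Z=\cC_{\sig''}$ for some Serre weight $\sig''$.

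To identify $\sig''=\sig$, show that $\cX^{\eta,\tau_s}_{n,\F}(\tilz)$ contains a dense set of ordinary $\rhobar$ as in \eqref{eqn:ord-rhobar}. The reason is that the points of $S^\circ_\F(\tilz)^{\nblat}$ that are upper triangular modulo the torus correspond to Breuil--Kisin modules with a filtration whose graded pieces are the rank one modules with exponents $\tilz$. These give characters $\chi_i$ with $\rhobar^\ss|_{I_K}=\prod_j\oom_{K,\sig_j}^{(\lam+\eta)_j}$, where $\lam$ is the $p$-restricted highest weight of $\sig$. Generic such points are maximally nonsplit, and deepness rules out the tr\`es ramifi\'ee ambiguity of condition (3). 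By the characterization of $\cC_\sig$ as the closure of this locus, we get $\cC_\sig = Z\subset \cX^{\eta,\tau_s}_{n,\F}$. Since $\sig\in\JH(\osig(\tau_s))$, this is also consistent with the expected component structure.

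Finally, transport the component along $r_{\tilw^*(\tau_s)}$ in \eqref{eqn:LM-modp}. Its image is $S^{\nblz}_\F(\tilw_1,\tilw_2,\tils)$, where $\tilz=(\tilw_2^\mo w_0\tilw_1)^*$ and $\tils=\tilw(\tau_s)$. Writing $\tilz$ in this form with $\tilw_1=w^\dia$ and $\tilw_2\in\tilW_1$ appropriately, apply \eqref{eqn:schubert-modification} with a suitable $w$. This gives $S^{\nblz}_\F(w^\dia,e,\tils\tilw_2^\mo w)$, and we choose $w$ so that $\tils\tilw_2^\mo w=t_\om$. The main obstacle is the bookkeeping: verifying that $\tilz$ is extremal and that $\tils\tilw_2^\mo w$ collapses to $t_\om$, with the $\pi$-twists tracked correctly. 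The $s$-dependence should cancel, since $s$ only enters through $\tilw_2$; I would check it first for $f=1$ and then insert the $\pi$-shifts.
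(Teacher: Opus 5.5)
Your outline has the same architecture as the paper's proof: an extremal chart, then Remark~\ref{rmk:S-in-U} and \eqref{eqn:LM} to get an irreducible component, then the ordinary-points argument of \cite[Lemma 7.4.6]{LLLMlocalmodel} to identify it with $\cC_\sig$, and finally \eqref{eqn:schubert-modification}. For $s=e$ this is essentially the paper's argument, since $\tilw(\rhobar,\tau_e)=w^\mo t_\eta w=t_{w^\mo(\eta)}$; this is the only case used later, through $\tau$ and $\tau'$ in Setup~\ref{setup}.

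For general $s\in W^\cJ$, however, your first and third steps do not go through.
\begin{itemize}
\item \emph{The shape is not extremal.} The $W^\cJ$-part of $\tilw(\tau_s)^\mo\tilw(\rhobar)$ is $(\pi^\mo(w)^\mo sw)^\mo s_{\rhobar}$. Here $s_{\rhobar}$ comes from a presentation of the niveau-one type $\rhobar^\ss|_{I_K}$, so it lies in the $\pi$-twisted class $\{x\pi(x)^\mo\}$ of $e$. The element $\pi^\mo(w)^\mo sw$ is $\pi$-twisted conjugate to $s$, and changing presentations moves each factor only within its twisted class. Hence this shape is a translation only if $s$ is in the trivial twisted class (for $f=1$, only if $s=e$). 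With the natural presentations you get $w^\mo s^\mo t_\eta w$.
\item \emph{The ordinary-points identification fails.} Upper-triangularity in a gauge basis gives a $G_K$-stable flag only when the descent datum has niveau one. Otherwise the descent datum permutes the gauge basis; indeed the torus-fixed point of an extremal chart for $\tau_s$ has the same niveau as $\tau_s$. So \eqref{eqn:ord-rhobar} cannot be read off the chart.
\end{itemize}
The paper avoids both problems by not tying $\tilz$ to any $\rhobar$. It fixes $\tilz=w^{-\dia}\tilw_h^\mo w_0 w^\dia=t_{w^\mo(\eta)}$ for every $s$, i.e.\ $\tilw_1=w^\dia$ and $\tilw_2=\tilw_h w^\dia$. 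It then compares in $\Fl^{\nblz,\cJ}$, hence in $\Phi\mathrm{\dash Mod}^{\et,n}_{K,\F}$ via \eqref{eqn:LM-modp}, with the case $s=e$, where \cite[Lemma 7.4.6]{LLLMlocalmodel} applies.

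Your claim that ``the $s$-dependence should cancel, since $s$ only enters through $\tilw_2$'' also needs correcting. Once $\tilz$ is fixed as above, $\tilw_2=\tilw_hw^\dia$ does not depend on $s$ at all; all of the $s$-dependence sits in $\tils=\tilw(\tau_s)$. A direct computation gives
\[
\tilw(\tau_s)\,w^{-\dia}\tilw_h^\mo = t_{\om+\pi^\mo(w)^\mo(1-s)(\nu_w-\eta)}\,\pi^\mo(w)^\mo s w_0 .
\]
So after right multiplication by an element of $W^\cJ$ you get $t_\om$ exactly when $(1-s)(\nu_w-\eta)=0$, for instance when $s=e$; this is the computation in the proof of Lemma~\ref{lem:chart-CLone}. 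For other $s$ the outcome depends on how the translation part of $\tau_s$ is normalized: with $\om+\pi^\mo(w)^\mo s(\nu_w-\eta)$ instead, it would be $t_\om$ for every $s$. This step therefore has to be computed explicitly, not assumed.
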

\begin{proof}
    By Remark \ref{rmk:S-in-U} and the local model diagram \eqref{eqn:LM}, $S^{\nblat}(\tilz)$ for 
    \begin{align*}
        \tilz = w^{-\dia}\tilw_h^\mo w_0 w^{\dia} = t_{w^\mo(\eta)}
    \end{align*}
    corresponds to an irreducible component in $\cX^{\eta,\tau_s}_{n,\F}$. To see that the corresponding component is $\cC_\sig$, we can argue as in the proof of \cite[Lemma 7.4.6]{LLLMlocalmodel} (this is done for $s=e$, but the same computation for general $s$ proves our claim). Finally, $S^{\nblat}(\tilz)$ is matched with $S^{\nblz}_\F(w^\dia,e,t_\om)$ by \eqref{eqn:schubert-modification}.
\end{proof}

\subsection{Intersection of local charts}\label{subsec:chart}
In this subsection, we specialize to the following setup.
\begin{setup}\label{setup}
    Recall from \S\ref{subsec:sp-alc} that we have $\sig=F_{(w^\dia,\om)}$ and $\sig'=F_{(u^\dia,\om)}$ with $w=s_\al u$ and $\al=\sig_{j_0}(\al_{i_0k_0})$. We can and do choose $w$ so that $\nu_w=\nu_u$ and $\tilw(\rhobar,\tau)=w^\mo t_\eta u$ as explained in Remark \ref{rmk:special}(3). 
\begin{itemize}
    \item $\rhobar\simeq \tau(\pi^\mo(w)^\mo u,\om+\pi^\mo(w)^\mo(\nu_u))$ a tame inertial type over $\F$
    \item $\tau\simeq \tau(\pi^\mo(w)^\mo w,\om+\pi^\mo(w)^\mo(\nu_w-\eta))$ a tame inertial type over $E$
    \item $\tilz := \tilw^*(\rhobar,\tau)$ where $\tilw(\rhobar,\tau) = w^\mo t_{\eta+\nu_u-\nu_w} u$
    \item $\rhobar'\simeq \tau(\pi^\mo(u)^\mo u, \om+\pi^\mo(u)^\mo(\nu_u))$ a tame inertial type over $\F$
    \item $\tau'\simeq \tau(\pi^\mo(u)^\mo u,\om+\pi^\mo(u)^\mo(\nu_u-\eta))$ a tame inertial type over $E$
    \item $\tilz':= \tilw^*(\rhobar',\tau')$ where $ \tilw(\rhobar',\tau')= t_{u^\mo(\eta)}$.
\end{itemize}
    We assume that $\sig$ is $(3n-4)$-deep. This implies that $\tau$ and $\tau'$ are $(2n-3)$-generic. 
\end{setup}

\begin{rmk}
    Let us explain the objects in Setup \ref{setup}.  Using \eqref{change-of-LAP}, one can check that
\begin{align*}
    {\pi^\mo(w^\dia)}\cdot (\pi^\mo(w)^\mo w,\om+\pi^\mo(w)^\mo(\nu_w-\eta)-\eta) = (e, \lambda-\eta)
\end{align*}
which implies that $\sig(\tau)\simeq \Ind_{B(k)}^{G(k)} [\tld{\lam}]$ where $\tld{\lam}=\sig^{U(k)}$ as in Example \ref{ex:PS}. Such a choice is useful because of Lemma \ref{lem:upperbound} (note that $\JH(\Ind_{B(k)}^{G(k)} \tld{\lam}) = \JH(\Ind_{\ov{B}(k)}^{G(k)} \tld{\lam})$). Then $\rhobar$ is chosen so that $\tilw(\rhobar,\tau) = (\tilw(\rhobar,\tau)_j)_{\jj}$ is the shape of colength one at $j=j_0$ and extremal at $j\neq j_0$ ``capturing $\sig$ and $\sig'$''. This means that $\cX^{\eta,\tau}_{n,\F}(\tilz)$ has two irreducible components that are open dense substacks of $\cC_{\sig}$ and $\cC_{\sig'}$. We denote these irreducible components by $\cC^\tau_{\sig}(\tilz)$ and $\cC^\tau_{\sig'}(\tilz)$, respectively.  

Similarly, we have $\rhobar'\simeq \prod_{\jj}\oom_{K,\sig_j}^{(\lam'+\eta)_j}$ and $\sig(\tau')\simeq \Ind_{B(k)}^{G(k)} [\tld{\lam}']$ where $\tld{\lam}'=(\sig')^{U(k)}$. Then $\tau'$ is chosen so that $\tilw(\rhobar',\tau')=t_{u^\mo(\eta)}$ is an extremal shape. In this case, $\cX_{n,\F}^{\eta,\tau'}(\tilz')$ is irreducible and is the ``ordinary locus'' of $\cC_{\sig'}$. We write $\cC^{\tau'}_{\sig'}(\tilz')=\cX^{\eta,\tau'}_{n,\F}(\tilz')$.
\end{rmk}

The following is the main result of this subsection.

\begin{prop}\label{prop:int}
    Following the above notations, we have
    \begin{align*}
        \cC^\tau_\sig(\tilz)\cap \cC^\tau_{\sig'}(\tilz) \cap \cC^{\tau'}_{\sig'}(\tilz') \neq \emptyset.
    \end{align*}
\end{prop}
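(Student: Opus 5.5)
Via the mod $p$ local model diagram \eqref{eqn:LM-modp}, all three loci become subvarieties of the single ambient space $\tld{\Fl}^{\nblz,\cJ}$, and the claim reduces to a statement about translated affine Schubert varieties. The map $\cX^{\eta,\tau}_{n,\F}(\tilz)\ra \Phi\mathrm{\dash Mod}^{\et,n}_{K,\F}$ is a closed immersion by \eqref{eqn:LM-modp}, and likewise for $\tau'$. So it suffices to find one point of $\tld{\Fl}^{\nblz,\cJ}$ lying in three images: those of $T^{\vee,\cJ}\times U^{\eta,\nblat}_\F(\tilz)$ and $T^{\vee,\cJ}\times U^{\eta,\nbl_{\bfa_{\tau'}}}_\F(\tilz')$ under $r_{\tilw^*(\tau)}$ and $r_{\tilw^*(\tau')}$, and, inside the first, the preimages of both components $\cC^\tau_\sig(\tilz)$ and $\cC^\tau_{\sig'}(\tilz)$. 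By Lemma \ref{lem:component}, $\cC_\sig$ corresponds to $S^{\nblz}_\F(w^\dia,e,t_\om)$ and $\cC_{\sig'}$ to $S^{\nblz}_\F(u^\dia,e,t_\om)$. The condition of lying in the open chart $\tilz$ (resp. $\tilz'$) then becomes an intersection with the open subsets $U_\F(\tilz)\tilw^*(\tau)$ (resp. $U_\F(\tilz')\tilw^*(\tau')$).

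Next I would use \eqref{eqn:schubert-modification} to rewrite everything relative to a common translate, so the problem becomes explicit. The chart $\tilz'$ is extremal, so by Remark \ref{rmk:S-in-U} it equals the open cell $S^\circ_\F(\tilz')^{\nbla}$. After right translation this is the open Schubert cell $S^\circ_\F((w_0u^\dia)^*)\,t_\om^*\cap\Fl^{\nblz}$, the ordinary locus of $\cC_{\sig'}$. The chart $\tilz$ is extremal at $j\ne j_0$ and of colength one at $j_0$. By Proposition \ref{prop:chart}, $U^{\eta,\nblat}_\F(\tilz)$ is an affine space over $\F[c,Z_{-\al}]/(cZ_{-\al})$ at $j_0$ and an affine space at $j\ne j_0$. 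Its two components $\{c=0\}$ and $\{Z_{-\al}=0\}$ correspond to $\cC^\tau_\sig(\tilz)$ and $\cC^\tau_{\sig'}(\tilz)$, and their intersection is the codimension-one locus $\{c=Z_{-\al}=0\}$. At $j\ne j_0$ nothing is needed beyond checking that the two extremal cells match up (both charts there are attached to $u$, by Setup \ref{setup}). So the whole problem is at $j_0$, where I must show that the image of $\{c=Z_{-\al}=0\}$ under $r_{\tilw^*(\tau)}$ meets the open cell $S^\circ_\F((w_0u^\dia)^*)t_\om^*$.

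To do this I would produce an explicit point using the matrix description of \cite[Proposition 4.3.2]{LLMPQ-CL} in case (a) form (Remark \ref{rmk:special}(3)). The natural candidate is the $T^\vee$-fixed point $\tilz\,\tilw^*(\tau)$, or a nearby point obtained by setting all free variables to zero. I would compute which Schubert cell (for the $\cI_\F$-orbits) it lies in after translation, and compare with $(w_0u^\dia)^*t_\om^*$. The fixed point itself is likely only in the closure, so I expect to have to turn on the root-group coordinate for $\al$, or a suitable free variable, to move into the open cell. I would verify this with a Bruhat-order and length computation using $m_{\tilw,\al}$ and Remark \ref{rmk:superadd}. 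The $\nblz$-condition must also be checked, but it is automatic for points of $U^{\nblat}$ after translation.

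The main obstacle is this last computation. It is exactly where the specific relation $w=s_\al u$ with $\ell(w^\dia)=\ell(u^\dia)+1$ enters. I expect to use that $\{c=Z_{-\al}=0\}$ is a translate of a $T^\vee$-stable affine space, so it meets the open cell as soon as one $T^\vee$-fixed point does, or one point on a one-parameter $u_\al$-orbit does. Genericity ($(2n-3)$) is needed so that the monodromy condition does not cut this point out.
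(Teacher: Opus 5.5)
Your reduction to the local-model charts matches the paper's. However, there is a genuine gap: the proposal puts the obstruction at the wrong embedding and offers no working mechanism for resolving it. When you rewrite the ordinary chart as $S^\circ_\F((w_0u^\dia)^*)t_\om^*\cap\Fl^{\nblz}$, you discard a Weyl-group factor. \eqref{eqn:schubert-modification} identifies only the closures $S^{\nblz}_\F(\cdot)$, not the open cells, and the discarded factor is what remembers the type. The types in Setup \ref{setup} involve $\pi^\mo(w)$ and $\pi^\mo(u)$ respectively. So after cancelling common factors, $V(c)\tilw^*(\tau)$ and $S^\circ_\F(\tilz')^{\nbl_{\bfa_{\tau'}}}\tilw^*(\tau')$ sit inside $u^{\dia*}\Ad_{w_0}(N_{(w_0u^\dia)^*})\pi^\mo(s_\al)$ and $u^{\dia*}\Ad_{w_0}(N_{(w_0u^\dia)^*})$. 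These agree at $j_0$. At $j_0+1$ they differ by right multiplication by $s_{\al_{i_0k_0}}$: there the shapes coincide ($\tilz_{j_0+1}=\tilz'_{j_0+1}$) but the types do not. Your claims that nothing is needed at $j\neq j_0$ and that the whole problem sits at $j_0$ are therefore both backwards. (Also, $V(c)=S^\circ_\F(\tilz)^{\nblat}$ is the $\cC_{\sig'}$-component and $V(Z_{-\al})$ the $\cC_\sig$-component, not the other way round.) The open condition lives at $j_0+1$. One shows by row operations with $\Ad_{u^{-\dia*}}(\cI)$ that the locus where $A\ix{j_0+1}_{-\al}$ and certain minors of $A\ix{j_0+1}s_{\al_{i_0k_0}}$ are units lies in the ordinary chart (Lemma \ref{lem:open-locus}). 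For $f>1$ this is independent of the closed condition $Z_{-\al}=0$, which is imposed on the $j_0$th matrix, and the proof ends there.

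For $f=1$ both conditions fall on the same matrix. The real content is that $V(c)\cap V(Z_{-\al})$ is not contained in the divisor where $a_{-\al}$ or some $\det(M_i)$ vanishes. This is not a Schubert-cell question, because $Z_{-\al}$ comes from the monodromy condition, so a Bruhat-order or length computation cannot settle it. Your proposed test point also fails: if only $c_{-\al}$ is turned on, \eqref{eqn:Z} gives $Z_{-\al}=(m_{-\al}-\RG{\bfa_\tau,-w^\mo(\al)^\vee})c_{-\al}\neq0$ by genericity of $\bfa_\tau$. What is missing is the explicit formula \eqref{eqn:Z}, the minor expansions of Remark \ref{rmk:open-conditions}, and Lemma \ref{lem:partition}, organized by a case split on $m_{u^\dia,\al}$.
\begin{itemize}
\item If $m_{u^\dia,\al}>0$, turn on only the simple-root coordinates. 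Every monomial of $Z_{-\al}$ is divisible by a non-simple $c_{-\be}$, so $Z_{-\al}=0$. Meanwhile $a_{-\al}$ and the $\det(M_i)$ restrict to nonzero multiples of $c_{\al_{k_0(k_0-1)}}\cdots c_{\al_{(i_0+1)i_0}}$.
\item If $m_{u^\dia,\al}=0$, show that $Z_{-\al}$ and the $\det(M_i)$ are linearly independent as linear forms in the $a_{\be_1}$ for $(\be_1,\be_2)\in\fD_{-\al}$. This uses that their coefficients $Y_{\be_2}$ and $X_{\be_2}$ differ, by $n$-genericity of $\bfa_\tau$.
\end{itemize}
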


The intersection $\cC^\tau_\sig(\tilz)\cap \cC^\tau_{\sig'}(\tilz)$ is a closed substack in $\cC^\tau_{\sig'}(\tilz)$. On the other hand, $\cC^\tau_{\sig'}(\tilz)\cap \cC^{\tau'}_{\sig'}(\tilz')$ is a non-empty open in $\cC^\tau_{\sig'}(\tilz)$ as it is the intersection of two open substacks in  $\cC^\tau_{\sig'}(\tilz)$. Thus, the above proposition asserts that these closed and open loci are not disjoint in $\cC^\tau_{\sig'}(\tilz)$.

We first justify the claims made above about $\cX_{n,\F}^{\eta,\tau}(\tilz)$ and $\cX_{n,\F}^{\eta,\tau'}(\tilz')$.

\begin{lem}\label{lem:chart-extremal}
   The algebraic stack $\cC^{\tau'}_{\sig'}(\tilz')=\cX^{\eta,\tau'}_{n,\F}(\tilz')$ is an open dense substack of $\cC_{\sig'}$ whose $\F$-points consist of $\rhobar$ admitting a complete flag with $\gr(\rhobar)\simeq \prod_{\jj}\oom_{K,\sig_j}^{(\lam'+\eta)_j}$ as $T^\vee(\F)$-valued representations. 
\end{lem}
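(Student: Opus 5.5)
The plan has three steps: identify the chart as an explicit affine space, show its image is open dense in $\cC_{\sig'}$, and then describe its $\F$-points via the triangular shape of the Frobenius matrices.

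\emph{Step 1: the chart and irreducibility.} Since $\tilz'=(t_{u^\mo(\eta)})^*$ is extremal in every embedding, Proposition \ref{prop:chart}(1) shows that $U^{\eta,\nbl_{\bfa_{\tau'}}}(\tilz')$ is an affine space over $\cO$ of relative dimension $n(n-1)f/2$. By Remark \ref{rmk:S-in-U}, its special fiber equals $S^\circ_\F(\tilz')^{\nbl_{\bfa_{\tau'}}}$. The local model diagram \eqref{eqn:LM} holds for the $(2n-3)$-generic type $\tau'$, by \S\ref{subsub:back-to-LM}. It identifies $\cX^{\eta,\tau'}_{n,\F}(\tilz')$ with the quotient of $T^{\vee,\cJ}\times \A^{n(n-1)f/2}_\F$ by a $T^{\vee,\cJ}$-torsor. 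Hence this stack is irreducible of dimension $n(n-1)f/2$, and so it is an open dense substack of a single irreducible component of $\cX_{n,\red}$.

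\emph{Step 2: the component is $\cC_{\sig'}$.} Apply Lemma \ref{lem:component} to $\sig'=F_{(u^\dia,\om)}$ with $s=u$, i.e.\ $\tau'=\tau_u$ for the presentation $(u^\dia,\om)$. Checking this requires comparing the formula for $\tau_s$ with the definition of $\tau'$ in Setup \ref{setup}; they agree. The lemma says that $\cC_{\sig'}\subset \cX^{\eta,\tau'}_{n,\F}$ and that it corresponds to the chart with $\tilz=u^{-\dia}\tilw_h^\mo w_0u^\dia = t_{u^\mo(\eta)}$, whose dual is $\tilz'$. Therefore $\cX^{\eta,\tau'}_{n,\F}(\tilz')$ is open dense in $\cC_{\sig'}$. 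One caveat: the component structure of the chart must be read off $(2n-3)$-genericity rather than the $(5n-1)$-generic Theorem 7.4.2, and this is exactly what Lemma \ref{lem:component} provides.

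\emph{Step 3: description of $\F$-points.} An $\F$-point of $U^{\eta,\nbl_{\bfa_{\tau'}}}_\F(\tilz')$ is a tuple of matrices $A^{(j)}$ in $\cI_\F v^{\nu}w\cI_\F$ with $t_\nu w=\tilz'_j$. The explicit extremal chart in \cite[Theorem 4.2.4]{LLLMlocalmodel} is
\[
A^{(j)} = D_j\cdot(\text{upper triangular unipotent})\cdot v^{\mu_j},
\]
in a suitable ordering after conjugating by $u_j$. The entries are polynomial in $v$, and the diagonal is fixed by the type.

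Translate this into the \'etale $\varphi$-module via \eqref{eqn:LM-modp}, using the $\tau'$-descent datum and gauge basis. Triangularity of all the Frobenius matrices with respect to a common ordering yields a $G_{K_\infty}$-stable complete flag. The diagonal characters are determined by $\tau'$ and the exponents $u^\mo(\eta)$. Conversely, any $\rhobar$ with such a flag and these graded pieces admits a $\tilz'$-gauge basis, by building the Breuil--Kisin module successively as extensions of rank-one modules.

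It then remains to pass from $G_{K_\infty}$ to $G_K$. Since the $(\varphi,\Gamma)$-structure is determined on $\cX^{\eta,\tau'}_n$, the genericity assumption makes restriction to $G_{K_\infty}$ fully faithful on the relevant objects, so the flag descends to $G_K$. Finally, the computation $\rhobar'\simeq\prod_j\oom_{K,\sig_j}^{(\lam'+\eta)_j}$ recorded in the remark after Setup \ref{setup} identifies the graded characters as stated.

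\emph{Main obstacle.} I expect the hardest step to be the converse inclusion together with the careful matching of diagonal characters. This means verifying that every $\rhobar$ with the prescribed flag is potentially crystalline of type $\tau'$ with shape exactly $\tilz'$, rather than some other extremal shape. I would handle it by induction on $n$ using extensions of characters and the uniqueness of the extremal shape given $\gr(\rhobar)$, again relying on genericity.
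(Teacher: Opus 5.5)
Your Steps 1 and 2 are sound and prove the first half of the statement. They use the same mechanism as Lemmas~\ref{lem:component} and~\ref{lem:chart-CLone}. One slip: for the presentation $(u^\dia,\om)$ the permutation part of $\tau_s$ is $\pi^\mo(u)^\mo s u$, so $\tau'=\tau_e$, not $\tau_u$. The paper proves the whole lemma, including the description of $\F$-points, simply by citing \cite[Lemma 7.4.6]{LLLMlocalmodel}.

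The gap is in Step 3.

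\emph{Forward direction.} Triangularity of the extremal chart is the right input. However, the passage from a $G_{K_\infty}$-stable flag to a $G_K$-stable one is only asserted. The monomorphism in \eqref{eqn:LM-modp} compares isomorphism classes, not morphisms from the graded characters into $\rhobar$. So a genuine full-faithfulness argument is needed here.

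\emph{Converse.} You flag this as the main obstacle but do not resolve it, and the proposed route does not reach it. An iterated extension of rank-one Breuil--Kisin modules of type $\tau'$ gives at best a point of $Y^{\le\eta,\tau'}(\tilz')$. By contrast, $\cX^{\eta,\tau'}_{n,\F}(\tilz')$ corresponds to $U^{\eta,\nbl_{\bfa_{\tau'}}}_\F(\tilz')=S^\circ_\F(\tilz')^{\nbl_{\bfa_{\tau'}}}$. The monodromy condition cuts this out of the Schubert cell: dimension $n(n-1)/2$ per embedding against $\ell(t_\eta)=\binom{n+1}{3}$, a real constraint once $n\ge 3$. Your sketch never checks this condition. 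Even granting it, you would only get some $\rhobar''$ in the chart with $\rhobar''|_{G_{K_\infty}}\simeq\rhobar|_{G_{K_\infty}}$. Concluding $\rhobar''\simeq\rhobar$ again needs a separate argument, since $\rhobar$ is not yet known to lie in $\cX^{\eta,\tau'}$.

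\emph{A possible route for the converse.} There is a softer argument that avoids Breuil--Kisin modules:
\begin{enumerate}
\item By genericity the relevant $\Ext^1$'s have constant dimension. So the upper-triangular $\rhobar$ with the prescribed inertial graded pieces (unramified twists varying) form an irreducible family in which the maximally non-split ones are dense.
\item Hence all of them lie in $\cC_{\sig'}\subset\cX^{\eta,\tau'}_{n,\F}$, by the definition of $\cC_{\sig'}$ and Lemma~\ref{lem:component}.
\item Conjugating by $t\mapsto \mathrm{diag}(t^{n-1},\dots,1)$ gives a map $\A^1\to\cX^{\eta,\tau'}_{n,\F}$ sending every $t\neq 0$ to $\rhobar$ and $0$ to $\gr(\rhobar)$.
\item Since $\gr(\rhobar)$ lies in the open chart (its shape is $\tilw^*(\rhobar',\tau')=\tilz'$), the preimage of the chart is an open neighbourhood of $0$ in $\A^1$. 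It therefore contains some $t\neq 0$, i.e.\ $\rhobar$ lies in the chart.
\end{enumerate}
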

\begin{proof}
    This follows from \cite[Lemma 7.4.6]{LLLMlocalmodel}. 
\end{proof}

\begin{lem}\label{lem:chart-CLone}
    The algebraic stack $\cX^{\eta,\tau}_{n,\F}(\tilz)$ has two irreducible components $\cC^\tau_{\sig'}(\tilz)$ and $\cC^\tau_{\sig}(\tilz)$, which are open dense substacks in $\cC_{\sig'}$ and $\cC_{\sig}$, respectively.
\end{lem}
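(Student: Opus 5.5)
We will deduce Lemma~\ref{lem:chart-CLone} from the local model diagram \eqref{eqn:LM} together with the explicit description of the chart in Proposition~\ref{prop:chart}. By Setup~\ref{setup}, $\tilz=(\tilz_j)_{\jj}$ has $\tilz_j$ extremal for $j\ne j_0$ and $\tilz_{j_0}$ of colength one, and $\tau$ is $(2n-3)$-generic. Hence \S\ref{subsub:back-to-LM} gives the diagram \eqref{eqn:LM} with $T^{\vee,\cJ}$-torsors on both sides. Reducing modulo $\varpi$, $\cX^{\eta,\tau}_{n,\F}(\tilz)$ is smooth-locally isomorphic to
\[
U^{\eta,\nblat}_\F(\tilz)=\prod_{j}U^{\eta,\nbl_{\bfa_{\tau,j}}}_\F(\tilz_j).
\]
By Proposition~\ref{prop:chart}, each factor with $j\ne j_0$ is an affine space. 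The factor at $j_0$ is an affine space over $\F[c,Z_{-\al}]/(cZ_{-\al})$. So $U^{\eta,\nblat}_\F(\tilz)$ has exactly two irreducible components, cut out by $c=0$ and $Z_{-\al}=0$, each of dimension $n(n-1)f/2$. Since torsors under the connected group $T^{\vee,\cJ}$ preserve irreducible components, $\cX^{\eta,\tau}_{n,\F}(\tilz)$ has exactly two irreducible components, both of dimension $n(n-1)f/2$.

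Next we identify these components. Because $\cX_{n,\red}$ is equidimensional of dimension $n(n-1)f/2$, each component of the open substack $\cX^{\eta,\tau}_{n,\F}(\tilz)$ is open dense in some $\cC_{\sig''}$. To pin down $\sig''$ we use the diagram \eqref{eqn:LM-modp}: right multiplication by $\tilw^*(\tau)$ carries the components of $U^{\eta,\nblat}_\F(\tilz)$ into $\tld{\Fl}^{\nblz,\cJ}$. By Remark~\ref{rmk:S-in-U} and Proposition~\ref{prop:irr(M)}, these components are (open pieces of) $S^{\nblat}_\F(\tilz^{(1)})$ and $S^{\nblat}_\F(\tilz^{(2)})$. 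Here $\tilz^{(1)},\tilz^{(2)}$ are the two extremal elements of $\Adm^{\vee}(\eta)^\cJ$ lying above $\tilz$ in Bruhat order; they agree with $\tilz$ away from $j_0$, and at $j_0$ they are the two translations $t_{w^\mo(\eta)}$ and $t_{u^\mo(\eta)}$ covering $w^\mo t_\eta u$.

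The key step is to show these two components match $\cC_\sig$ and $\cC_{\sig'}$. We would write $\tilz^{(1)}=(w^{-\dia}\tilw_h^\mo w_0 w^\dia)^*$-type expressions relative to $\tau$ and apply \eqref{eqn:schubert-modification}. This should identify the images in $\Fl^{\nblz,\cJ}$ with $S^{\nblz}_\F(w^\dia,e,t_\om)$ and $S^{\nblz}_\F(u^\dia,e,t_\om)$ respectively. Lemma~\ref{lem:component}, or its proof run for $u$ in place of $w$ and for the type $\tau$, then identifies these with $\cC_\sig$ and $\cC_{\sig'}$. The bookkeeping uses $\nu_w=\nu_u$ and the lowest alcove presentations in Setup~\ref{setup}.

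Finally, the two components must be shown to be genuinely distinct, so that neither is embedded in the other or lies outside the top-dimensional part. This follows since $\sig\ne\sig'$, because $w^\dia\ne u^\dia$, and components of $\cX_{n,\red}$ correspond bijectively to Serre weights. The step I expect to be the main obstacle is matching the component $Z_{-\al}=0$ (or $c=0$) with the correct Schubert variety under \eqref{eqn:schubert-modification}, especially checking the genericity needed there, namely that $\tils^*(0)$ is $(n-1)$-generic. I would try to handle it by direct comparison with \cite[Proposition 4.3.2]{LLMPQ-CL}, where the two components of the colength one chart are written explicitly.
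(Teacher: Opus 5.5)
\textbf{There is a genuine gap at the identification step.} You claim that the two components of $U^{\eta,\nblat}_\F(\tilz)$ are open in $S^{\nblat}_\F(\tils)$ for the two \emph{extremal} elements above $\tilz$, namely $t_{w^\mo(\eta)}$ and $t_{u^\mo(\eta)}$ at $j_0$. That is not right, because the colength one element $\tilz$ is itself regular. Indeed, $\tilz=(w^{-\dia}\tilw_h^\mo w_0u^\dia)^*$ with $u^\dia\uparrow w^\dia$, so $S^{\nblat}_\F(\tilz)$ is itself a top-dimensional component of $M^{\le\eta,\nblat}_\F$ passing through $\tilz$.

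In fact the component $\{c=0\}$ is exactly $S^\circ_\F(\tilz)^{\nblat}$ (Lemma \ref{lem:V(c)}). This is already hinted at by your own citation of Remark \ref{rmk:S-in-U}. This component lies in the closed set $S_\F(\tilz)$, which is disjoint from $S^\circ_\F(t^*_{u^\mo(\eta)})$. Hence it cannot be dense in $S^{\nblat}_\F(t^*_{u^\mo(\eta)})$. The correct pair, which the paper uses, is $\tils\in\{\tilz,\ (w^{-\dia}\tilw_h^\mo w_0 w^\dia)^*=t^*_{w^\mo(\eta)}\}$. Here $\{c=0\}$ gives the $\sig'$-component and $\{Z_{-\al}=0\}$ gives the $\sig$-component. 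Bruhat order by itself does not give you this list; the key missing observation is that $\tilz$ belongs to it.

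\textbf{With your pair, the postponed bookkeeping would actually fail.} Writing $t_{u^\mo(\eta)}=u^{-\dia}\tilw_h^\mo w_0 u^\dia$ forces $\tilw_2=\tilw_h u^\dia$. Then \eqref{eqn:schubert-modification} gives $S^{\nblz}_\F(u^\dia,e,\tilw(\tau)u^{-\dia}\tilw_h^\mo)$. Now use $\tilw(\tau)=t_\om\tilw_h w^\dia$ and $w^\dia u^{-\dia}=s_{\al,m}$. This yields $\tilw(\tau)u^{-\dia}\tilw_h^\mo=t_\om\,\tilw_h s_{\al,m}\tilw_h^\mo$. Here $\tilw_h s_{\al,m}\tilw_h^\mo=t_{(m-\RG{\eta,\al^\vee})w_0(\al)}s_{w_0(\al)}$, and its translation part is non-zero because $m=m_{w^\dia,\al}<\RG{\eta,\al^\vee}$, as $w^\dia$ is restricted. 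So this element is not in $t_\om W$, and the component corresponds to a weight $F_{(u^\dia,\om'')}$ with $\om''\neq\om$, not to $\sig'$.

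The point of taking $\tilz$ and $t^*_{w^\mo(\eta)}$ is that both share $\tilw_2=\tilw_h w^\dia$. Then \eqref{eqn:schubert-modification} gives $S^{\nblz}_\F(s^\dia,e,\tilw(\tau)w^{-\dia}\tilw_h^\mo)=S^{\nblz}_\F(s^\dia,e,t_\om)$ uniformly for $s\in\{u,w\}$. Lemma \ref{lem:component} then finishes the proof, applied to $\sig$ and to $\sig'$ with $u$ in place of $w$.

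The rest of your argument is fine: the two components coming from $cZ_{-\al}=0$, the passage through the $T^{\vee,\cJ}$-torsors, and the equidimensionality argument.
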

\begin{proof}
    Consider $U^{\eta,\nblat}_\F(\tilz)\tilw^*(\tau)\subset \Fl^{\nblz,\cJ}$ as in \eqref{eqn:LM-modp}. We show that its two irreducible components are open dense in $S^{\nblz}_\F(w^\dia,e,t_\om)$ and $S^{\nblz}_\F(u^\dia,e,t_\om)$. Then the claim follows from Lemma \ref{lem:component}. 

    Recall that $U_\F^{\eta,\nblat}(\tilz)$ is contained in $(M^{\le\eta,\cJ}_\F)^{\nblat}$, whose top-dimensional irreducible components are of the form $S_\F^{\nblat}(\tils)$ for $\tils\in \Adm^{\reg,\vee}(\eta)^\cJ$ (Proposition \ref{prop:irr(M)}). Since
    \begin{align*}
        S_\F^{\nblat}(\tils)\subset S_\F(\tils) = \cup_{\tils'\le \tils} S^\circ_\F(\tils'),
    \end{align*}
    $S_\F^{\nblat}(\tils)$ contains $\tilz$ only if $\tils=\tilz = (w^{-\dia} \tilw_h^\mo w_0 u^\dia)^*$ or $(w^{-\dia} \tilw_h^\mo w_0 w^\dia)^*$. Since both irreducible components of $U_\F^{\eta,\nblat}(\tilz)$ contain $\tilz$, they are open subschemes of $S_\F^{\nblat}(\tils)$ for such two $\tils$. Using \eqref{eqn:schubert-modification}, we have
    \begin{align*}
        S^{\nblat}_\F((w^{-\dia} \tilw_h^\mo w_0 s^\dia)^*)\tilw^*(\tau)  &= S^{\nblz}_\F(s^\dia, \tilw_h w^\dia, \tilw(\tau)) \\ &= S^{\nblz}_\F(s^\dia, e, \tilw(\tau) w^{-\dia} \tilw_h^\mo) \\&= S^\nblz_\F(s^\dia,e, t_\om)
    \end{align*}
    for $s\in \{u,w\}$. 
\end{proof}

The remainder of this subsection is devoted to the proof of Proposition \ref{prop:int}. Since the local model diagram is given by $T^{\vee,\cJ}$-torsors, non-emptiness of the intersection can be checked after pullback to the corresponding local model chart. 
Recall from Proposition \ref{prop:chart} that we have explicit functions $c$ and $Z_{-\al}$ on the $j_0$th component of $U^{\eta,\nblat}_\F(\tilz)$. We write $V(c)$ and $V(Z_{-\al})$ for the vanishing loci of $c$ and $Z_{-\al}$ in $U^{\eta,\nblat}_\F(\tilz)$. 

\begin{lem}\label{lem:int}
    The intersection $(V(c) \cap V(Z_{-\al})) \tilw^*(\tau) \cap S_\F^\circ(\tilz')^{\nbl_{\bfa_{\tau'}}}\tilw^*(\tau')$ is non-empty.
\end{lem}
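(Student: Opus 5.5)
The plan is to exhibit an explicit point in the intersection. We work one embedding at a time: both sets are products over $\jj$ after right translation, since $U^{\eta,\nblat}_\F(\tilz)$, $S^\circ_\F(\tilz')$ and the translations $\tilw^*(\tau),\tilw^*(\tau')$ all decompose over $\cJ$. It therefore suffices to produce, for each $\jj$, a point of the $j$th factor of $U^{\eta,\nblat}_\F(\tilz)$, with $c=Z_{-\al}=0$ imposed at $j=j_0$, whose right translate by $\tilw^*(\tau)_j$ lies in $S^\circ_\F(\tilz'_j)^{\nbl_{\bfa_{\tau',j}}}\tilw^*(\tau')_j$. Equivalently, the point $A$ of $U^{\eta,\nblat}_\F(\tilz)$ should satisfy
\[
A\,\tilw^*(\tau)\tilw^*(\tau')^{-1}\in S^\circ_\F(\tilz').
\]
The $\nbl$-condition is automatic here: $\tilw^*(\tau)$ and $\tilw^*(\tau')$ both land in $\Fl^{\nblz,\cJ}$ by \eqref{eqn:LM-modp}, and the relevant component is Frobenius-compatible.

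First compute $\tilw^*(\tau)\tilw^*(\tau')^{-1}$ from Setup \ref{setup}. With $\tilw(\tau)=t_{\mu+\eta}s$ we get $\tilw(\tau)=t_\om w^{\dia}\tilw_h^{\mo}$-type expressions, as in the proof of Lemma \ref{lem:chart-CLone}, and similarly $\tilw(\tau')$ with $u$ in place of $w$. The discrepancy is then essentially $w^{-\dia}u^{\dia}$-conjugated, which is trivial for $j\ne j_0$ and equals a single affine reflection $s_{\al,m}$ at $j_0$.

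For $j\neq j_0$, where $\tilz_j=\tilz'_j$ is extremal, both charts are equal to $S^\circ_\F(\tilz_j)^{\nbla}$ by Remark \ref{rmk:S-in-U}. Their open intersection is therefore nonempty; in particular the torus-fixed point $v^{\nu}w$ works.

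At $j=j_0$, use the explicit parametrisation of $U^{\le\eta,\nbla}(\tilz_{j_0})$ for the colength one shape of first form (case (a)) from \cite[Proposition 4.3.2]{LLMPQ-CL}. It is an affine space over $\F[c,Z_{-\al}]/(cZ_{-\al})$, and setting $c=Z_{-\al}=0$ leaves an affine space of dimension $n(n-1)/2-1$ of explicit matrices. The task is to choose the remaining free variables so that the product $A\,s_{\al,m}^*$ lies in the open Schubert cell $S^\circ_\F(t_{u^{-1}(\eta)}^*)$. Concretely, one computes the elementary divisors and Iwahori-relative position of $A\,s_{\al,m}^*$ and checks that generic choices of the free entries give relative position exactly $\tilz'$. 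The most natural candidate is to set all free variables to $0$ except the entry in root position $\al$ (or $-\al$) adjacent to $c$, and then verify the relative position directly via minors.

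The main obstacle is this $j_0$ computation. The specialness hypothesis, namely that $\al_{i_0k_0}$ is not in any proper Levi (so $(i_0,k_0)=(1,n)$ up to the $\delta$-twist), is what should make the $(1,n)$-entry the corner entry. In that position it can be made a unit after multiplying by $s_{\al,m}$ without disturbing the other minors, so the open cell condition is a single nonvanishing of a determinant. If the naive choice fails, I would instead reduce to a $\GL_2$-type computation in the $\{i_0,k_0\}$ block. Using \eqref{eqn:schubert-modification}, I would move both sets into the form $S^{\nblz}_\F(\cdot,e,t_\om)$ and compare the open cell of $S^{\nblz}_\F(u^\dia,e,t_\om)$ with its boundary divisor meeting $S^{\nblz}_\F(w^\dia,e,t_\om)$.
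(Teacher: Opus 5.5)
There is a genuine gap, and it begins with the bookkeeping of where the two charts differ. By Lemma \ref{lem:V(c)}, $V(c)=u^{\dia*}\Ad_{w_0}(N^{\nblat'}_{(w_0u^\dia)^*})\,t_\eta w^{-\dia*}$, while $S^\circ_\F(\tilz')\subset u^{\dia*}\Ad_{w_0}(N_{(w_0u^\dia)^*})\,t_\eta u^{-\dia*}$. After right multiplication by $\tilw^*(\tau)$, resp.\ $\tilw^*(\tau')$, and cancelling the common factor, you are comparing $u^{\dia*}\Ad_{w_0}(N_{(w_0u^\dia)^*})\pi^{\mo}(s_\al)$ with $u^{\dia*}\Ad_{w_0}(N_{(w_0u^\dia)^*})$. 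So the only genuine discrepancy is the finite reflection $\pi^\mo(s_\al)$. Because of the $\pi$-twist built into $\tau,\tau'$, it lives at the embedding $j_0+1$; it is not an affine reflection at $j_0$.

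Consequently your claim that for $j\neq j_0$ the translated charts coincide, and that the torus-fixed point works, is false at $j=j_0+1$. There $\tilz_{j_0+1}=\tilz'_{j_0+1}$, but the translates differ by $s_{\al_{i_0k_0}}$. The torus-fixed point ($A=1$) is not in the intersection, since $u^{\dia*}s_{\al_{i_0k_0}}w_0$ and $u^{\dia*}w_0$ lie in different $\cI$-double cosets. What is true (Lemma \ref{lem:open-locus}) is that the intersection contains the open locus where $A\ix{j_0+1}_{-\al}$ and the minors of $A\ix{j_0+1}s_{\al_{i_0k_0}}$ on rows and columns $\{k_0-i+1,\dots,k_0\}$, for $2\le i\le k_0-i_0$, are units. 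Moreover, when $f>1$, every point of $V(c)$ already satisfies the membership condition at $j_0$.

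With this picture the case $f>1$ is immediate. The conditions $c=Z_{-\al}=0$ are closed conditions on the $j_0$th matrix, the open conditions are on the $(j_0+1)$th matrix, and the two are independent. Your plan instead puts the hard computation at $j_0$, where there is nothing to check, and dismisses the one factor where there is.

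For $f=1$ both kinds of conditions sit on the same matrix, and this is where the real content of the lemma lies; your proposal does not supply it. Your candidate point fails in every reading:
\begin{itemize}
\item the $\al$-entry of $A\in\Ad_{w_0}(N_{(w_0u^\dia)^*})\subset L^+\ov{U}$ is zero;
\item if only $c_{-\al}$ is nonzero, then \eqref{eqn:Z} gives $Z_{-\al}=(m_{-\al}-\RG{\bfa_\tau,-w^\mo(\al)^\vee})c_{-\al}\neq 0$ by genericity;
\item if all coordinates vanish, you are at the torus-fixed point, where $a_{-\al}=0$.
\end{itemize}
For the same reason, a reduction to the $\{i_0,k_0\}$-block cannot work: you must use the intermediate indices $i_0<t<k_0$.

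Two further corrections. The open condition is not a single determinant; it is $a_{-\al}\neq0$ together with $\det M_i\neq0$ for $2\le i\le k_0-i_0$ (Remark \ref{rmk:open-conditions}). Also, specialness is not a hypothesis of this lemma (it is only used in Proposition \ref{prop:ord-ss}(2)), so you may not assume $(i_0,k_0)=(1,n)$.

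The missing ideas are as follows. If $m_{u^\dia,\al}>0$, the monomial $c_{\al_{k_0(k_0-1)}}\cdots c_{\al_{(i_0+1)i_0}}$ occurs in $a_{-\al}$ but not in $Z_{-\al}$ (Lemma \ref{lem:partition}(1)). Hence the locus where exactly the simple-root coordinates are nonzero lies in $V(Z_{-\al})$ and satisfies all the open conditions. If $m_{u^\dia,\al}=0$, write $Z_{-\al}$ and the $\det M_i$ as linear forms in the $a_{ti_0}$, with coefficients $Y_{\be_2}$ and $X_{\be_2}$ respectively. Then show these forms are linearly independent using $X\neq Y$; this is where the genericity of $\bfa_\tau$ enters.
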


We first describe $V(c)$ in terms of a Schubert cell. By \eqref{eqn:schubert-modification} and \cite[Proposition 4.3.1]{LLLMlocalmodel}, we have
\begin{align*}
    S^\circ_\F(\tilz)^{\nblat} = (S^\circ_\F((w_0u^\dia)^*) \tilw_h^{-*} w^{-\dia*})^{\nblat}.
\end{align*}
There is an affine scheme $N_{(w_0u^\dia)^*}\subset LG_\F^{\cJ}$ such that the natural map $(w_0u^\dia)^* N_{(w_0u^\dia)^*}\ra S^\circ_\F((w_0u^\dia)^*)$ is an isomorphism \cite[Proposition 4.2.13]{LLLMlocalmodel}. We denote by $N_{(w_0u^\dia)^*}^{\nblat'} \subset N_{(w_0u^\dia)^*}$ the closed subscheme given by the condition
\begin{align*}
    (w_0u^\dia)^*N_{(w_0u^\dia)^*}^{\nblat'} \tilw_h^{-*}w^{-\dia*} = ((w_0u^\dia)^*N_{(w_0u^\dia)^*} \tilw_h^{-*}w^{-\dia*})^{\nblat}.
\end{align*}

\begin{lem}\label{lem:V(c)}
    We have $V(c) = S^\circ_\F(\tilz)^{\nblat}$ as subvarieties of $\Fl^{\nblat}$. Moreover, $V(c) = (w_0u^\dia)^*N_{(w_0u^\dia)^*}^{\nblat'}\tilw_h^{-*} w^{-\dia*}$ as subvarieties of $LG_\F^{\cJ}$.
\end{lem}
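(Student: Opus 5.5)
We need to prove Lemma V(c): $V(c) = S^\circ_\F(\tilz)^{\nblat}$ inside $\Fl^{\nblat}$, and the explicit description via $N$.

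The plan is to compare both sides inside the chart $U^{\eta,\nblat}_\F(\tilz)$. Recall from Remark \ref{rmk:S-in-U} that $S^\circ_\F(\tilz)^{\nblat}\subseteq U^{\le\eta,\nblat}_\F(\tilz)=U^{\eta,\nblat}_\F(\tilz)$, so both sides are subschemes of the same affine scheme. Since $\tilz$ is extremal at $j\neq j_0$, Remark \ref{rmk:S-in-U} gives equality in those factors, and there $V(c)$ imposes no condition. We are therefore reduced to the $j_0$th factor, where $\tilz_{j_0}$ has colength one. By Proposition \ref{prop:chart}(2), this factor is an affine space over $\F[c,Z_{-\al}]/(cZ_{-\al})$ of relative dimension $n(n-1)/2-1$. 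Hence $V(c)$ is an affine space of dimension $n(n-1)/2$: irreducible, reduced, and equal to one of the two irreducible components.

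\textbf{Step 1 (inclusion $S^\circ\subseteq V(c)$).} I would use the explicit form of the universal matrix in \cite[Proposition 4.3.2]{LLMPQ-CL}. On the gauge-basis chart, $c$ is the coefficient multiplying the entry that, when nonzero, moves the matrix out of the open cell $\cI v^{\tilz}\cI$ into the larger cell indexed by the other element $(w^{-\dia}\tilw_h^\mo w_0 w^\dia)^*$. Concretely, I would check that points of $\cI_\F\tilz\cI_\F$ written in $\tilz$-gauge have the relevant entry with zero $v$-constant term, which forces $c=0$.

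\textbf{Step 2 (equality).} $S^\circ_\F(\tilz)^{\nblat}$ is open in $S^{\nblat}_\F(\tilz)$. Its closure is a top-dimensional component by Proposition \ref{prop:irr(M)}, since $\tilz$ is regular: it is $w^\mo t_\eta u$ with restricted ends. So $S^\circ_\F(\tilz)^{\nblat}$ has dimension $n(n-1)f/2$. It is contained in the irreducible $V(c)$ of the same dimension, so it is open dense in $V(c)$.

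For the reverse inclusion, the other component $V(Z_{-\al})$ meets $V(c)$ along $V(c,Z_{-\al})$. I would argue that $V(c)\setminus S^\circ$ lies in smaller Schubert cells $S^\circ(\tils')$ with $\tils'<\tilz$. Such points cannot lie in $U(\tilz)$, because $U(\tilz)\cap S(\tilz)$ is exactly the open cell: $U(\tilz)$ is the big-cell translate around $\tilz$. Thus $V(c)\subseteq S(\tilz)\cap U(\tilz)=S^\circ(\tilz)$. The key point here is that $V(c)$ lies in the closed Schubert variety $S(\tilz)$, which holds by closure of the open dense subset $S^\circ$ found above.

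\textbf{Step 3 (second statement).} Transport the equality through the isomorphism $(w_0u^\dia)^*N_{(w_0u^\dia)^*}\risom S^\circ_\F((w_0u^\dia)^*)$ of \cite[Proposition 4.2.13]{LLLMlocalmodel}, together with the identity $S^\circ_\F(\tilz)^{\nblat}=(S^\circ_\F((w_0u^\dia)^*)\tilw_h^{-*}w^{-\dia*})^{\nblat}$. The $\nblat$ condition then pulls back, by definition, to $N^{\nblat'}$. Equality as subvarieties of $LG^\cJ_\F$, rather than of $\Fl$, requires that the gauge-basis section $U(\tilz)\to LG$ agree with the section given by $N$. I would verify this by comparing the normalizations of the $\tilz$-gauge in \cite{LLMPQ-CL} with the form of $N_{(w_0u^\dia)^*}$.

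The main obstacle is Step 1 together with this final matching of sections. Both require bookkeeping with the explicit matrices of \cite[Proposition 4.3.2]{LLMPQ-CL}. If direct matching fails, I would instead prove equality in $\Fl$ first, then use that both are sections over the same subset with the same $\cI_1$-torsor normalization.
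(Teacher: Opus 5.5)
Your reduction to the $j_0$th factor is fine. Your Step 2 is also a valid way to get the reverse inclusion, \emph{once} $S^\circ_\F(\tilz)^{\nblat}\subseteq V(c)$ is known: density in the irreducible affine space $V(c)$ gives $V(c)\subseteq S_\F(\tilz)$, and $U(\tilz)\cap S_\F(\tilz)=S^\circ_\F(\tilz)$ for the translated big cell. This could replace the paper's closing ``two affine spaces of the same dimension'' step.

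The gap is that Step 1 and the section matching in Step 3 are only announced, and they are the whole content of the lemma. What is needed is a matrix-level containment, proved before imposing monodromy. By \cite[Proposition 3.1.2]{LLMPQ-CL}, the locus $V'(c)\subset U^{\le\eta}_\F(\tilz)$ consists of $A=u^\mo v^\eta Vw$ with $V\in\ov{U}(R[v])$ whose $\be$-entry has degree bounded by $\kappa_\be+m'_\be$. Using $\nu_u=\nu_w$, the translate $(w_0u^\dia)^*N_{(w_0u^\dia)^*}\tilw_h^{-*}w^{-\dia*}$ consists of the matrices $u^\mo v^\eta\,\Ad_{v^{-\eta+\nu_w}w_0}(N_{(w_0u^\dia)^*})\,w$. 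By \cite[Corollary 4.2.12]{LLLMlocalmodel}, $\deg N_{(w_0u^\dia)^*,w_0(\be)}=\RG{-\nu_u,\be^\vee}-\del_{u^\mo(\be)>0}$. So the containment amounts to the identity $\kappa_\be+\sig_\be=-\del_{u^\mo(\be)>0}$ for every $\be\in\Phi^-$, where $\sig_\be=m'_\be+\RG{\eta,\be^\vee}$. This identity has to be checked against the cases of \cite[Proposition 3.1.2]{LLMPQ-CL}: $\be$ bad, $\be$ in the column of $-\al$ with $s_\al(\be)$ bad, and the remaining cases. Intersecting with $\Fl^{\nblat}$ and comparing dimensions then gives the second claim, and the first follows from it. So this single computation gives your Step 1 and Step 3 at once, and the separate $\Fl$-level detour is unnecessary.

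Your fallback for Step 3 does not close the gap. Two sections of $LG_\F\to\Fl$ over the same locus can differ by a unipotent $\cI_\F$-valued factor, and the $T^\vee$-torsor $\tld{\Fl}\to\Fl$ does not see that factor. Equality in $LG_\F^\cJ$ therefore genuinely requires knowing that the $N$-matrices are already in the chart's normal form $u^\mo v^\eta Vw$, which is exactly the degree identity above.

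Your heuristic for Step 1 also points at the wrong mechanism. In the chart one has $w_{j_0}A^{(j_0)}w_{j_0}^\mo=u_{-\al}(c)s_\al v^\eta V$ (cf.\ the proof of Proposition~\ref{prop:ord-ss}). Thus $c=0$ means the root-group factor $u_{-\al}(c)$ disappears and $A=u^\mo v^\eta Vw$; it is not a vanishing-constant-term condition. Showing that points of $\cI_\F\tilz\cI_\F$ have precisely this shape is again the degree comparison.
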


\begin{proof}
    The first claim follows from the second. We need to recall the explicit description of $V(c)$ from \cite{LLMPQ-CL}. They first compute $U_\F^{\le\eta}(\tilz)$ (Proposition 3.3.1 in \loccit) and then compute $U_\F^{\eta,\nblat}(\tilz)$ by adding monodromy conditions (Proposition 4.3.2). The coordinate $c$ is already defined for $U_\F^{\le\eta}(\tilz)$, and we denote its vanishing locus by $V'(c)$ to ease the notation. We show that $V'(c)$ contains $(w_0u^\dia)^*N_{(w_0u^\dia)^*}\tilw_h^{-*} w^{-\dia*}$ (as subvarieties of $LG_\F$). By taking intersection with $\Fl^{\nblat}$, this implies that $V(c)$ contains $(w_0u^\dia)^*N_{(w_0u^\dia)^*}^{\nblat'}\tilw_h^{-*} w^{-\dia*}$. Since they are affine spaces of the same dimension, this implies the second claim.

    Let $R:= \cO(U_\F^{\le\eta}(\tilz))$.  Proposition 3.1.2 in \loccit~shows that $V'(c)$ is given by the universal matrix $A\in LG_\F(R)$ such that
    \begin{align*}
        A=u^\mo v^\eta V w
    \end{align*}
    for $V\in \ov{U}(R[v])$ with entries satisfying certain degree bound. Comparing this with $(w_0u^\dia)^*N_{(w_0u^\dia)^*}\tilw_h^{-*} w^{-\dia*}$, we want to show that
    \begin{align*}
         \Ad_{v^{-\eta+\nu_w} w_0}(N_{(w_0 u^\dia)^*}) \subset V.
    \end{align*}
     Note that $N_{(w_0 u^\dia)^*}\in U(R[v])$. Thus, $\Ad(v^{-\eta+\nu_w} w_0)(N_{(w_0 u^\dia)^*})\in \ov{U}(R[v])$. We need to show that the degree of entries of $\Ad_{v^{-\eta+\nu_w} w_0}(N_{(w_0 u^\dia)^*})$ matches those of $V$.

    For $\be \in \Phi^-$, the degree of $\be$-entry of $\Ad(v^{-\eta+\nu_w} w_0)(N_{(w_0 u^\dia)^*})$ is given by $\RG{-\eta+\nu_w,\be^\vee} + \deg N_{(w_0u^\dia)^*,w_0(\be)}$ and by \cite[Corollary 4.2.12]{LLLMlocalmodel}
    \begin{align*}
        \deg N_{(w_0 u^\dia)^*,w_0(\be)} = \floor{\RG{u^\dia(x),-\be^\vee}} = \RG{-\nu_u,\be^\vee} - \del_{u^\mo (\be) > 0}.
    \end{align*}
    On the other hand, $\deg V_{\be}$ is given by $\kappa_\beta + m'_\beta$ defined in \cite[\S3]{LLMPQ-CL}. We have $\RG{-\eta,\be^\vee}-2 \le m'_\beta \le \RG{-\eta,\be^\vee}$ and define $\sig_\be := m'_\be + \RG{\eta,\be^\vee}$. Then we need to check that
    \begin{align*}
        \kappa_\be +\sig_\be = - \del_{u^\mo (\be) > 0}.
    \end{align*}
    This follows by unwrapping the definitions of $\ka_\be$ and $\sig_\be$.

    In what follows, we freely use technical terms from \cite[\S3]{LLMPQ-CL}. Note that our $w$, $u$, $k_0$ are $w^\mo$, $(w')^\mo$, $j_0$ in \loccit\  We treat the three possible cases of $\sig_\be$ as in \S3.3 equation (10), subdivided by cases in Proposition 3.1.2 in \loccit
    \begin{itemize}
        \item Suppose that $\be$ is bad. This is case (1) in Proposition 3.1.2. We get $\ka_\be=\sig_\be=\del_{u^\mo (\be) > 0}=0$. 
         \item Suppose that $\be$ shares the column of $-\al$ and $s_\al(\be)\in \Phi^-$ bad. This is part of case (2) in Proposition 3.1.2. Then $\sig_\be=-2$. In this case, $s(\be)\in \Phi^-$ implies $\be = \al_{ji_0}$ for $j> k_0$. Since $s(\be)$ is bad, we have $\del_{u^\mo(\be)<0}\neq \del_{w^\mo(\be)}<0$. This is only possible when $w^\mo(j)$ is in $(w^\mo(i_0),w^\mo(k_0))$, in which case $\ka_\be=1$ and $\sig_\be=-1$.
    \item Suppose otherwise. This is (part of) cases (2,3,4,5) in Proposition 3.1.2. In this case, $\sig_\be=-1$. Using
    \begin{align*}
        -\del_{u^\mo(\be)>0} = \del_{u^\mo(\be)<0} -1,
    \end{align*}
    we need to check $\ka_\be = \del_{u^\mo(\be)<0}$. In case (3), this is true. In the remaining cases, we need to show that $\del_{w^\mo(\be)<0} = \del_{u^\mo(\be)<0}$. When $\be$ does not share row or column of $\al$ or $-\al$, $u^\mo(\be)=w^\mo(\be)$. Otherwise, $\del_{w^\mo(\be)<0} = \del_{u^\mo(\be)<0}$ because $\be$ (or $s(\be)$) is not bad. \qedhere
    \end{itemize}
\end{proof}

Now we compute the intersection $V(c)\tilw^*(\tau) \cap S_\F^\circ(\tilz')^{\nbl_{\bfa_{\tau'}}}\tilw^*(\tau')$. This is done most naturally in terms of Schubert cells. Recall from the proof of Lemma \ref{lem:V(c)} that
\begin{align*}
    V(c) = u^{\dia*} \Ad_{w_0}( N_{(w_0u^\dia)^*}^{\nblat'}) t_{\eta} w^{-\dia *}.
\end{align*}

\begin{lem}\label{lem:open-locus}
    The intersection $V(c)\tilw^*(\tau) \cap S_\F^\circ(\tilz')^{\nbl_{\bfa_{\tau'}}}\tilw^*(\tau')$ contains the open subscheme of $V(c)\tilw^*(\tau)$ given by matrices of the form $u^{\dia*}  A  t_{\eta}  w^{-\dia *} \tilw^*(\tau)$ where for $\F$-algebra $R$,
    \begin{enumerate}
        \item $A\in \Ad_{w_0}N_{(w_0u^\dia)^*}^{\nblat'}(R)$;
        \item $A_{-\al}\ix{j_0+1} \in R\DB{v}^\times$; 
        \item for each $2 \le i \le k_0-i_0$, the $i\times i$-minor of $A\ix{j_0+1}s_{\al_{i_0k_0}}$ given by rows and columns in $\{ k_0-i+1, \dots, k_0 \}$ is in $R\DB{v}^\times$.
    \end{enumerate}
\end{lem}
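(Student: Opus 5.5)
The plan is to translate both sides of the intersection into a common frame inside $\tld{\Fl}^{\nblz,\cJ}$, where membership reduces to a Schubert-cell computation, and then to use conditions (2) and (3) to produce an explicit Iwahori factorization.

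\emph{Common frame.} By the computation in the proof of Lemma \ref{lem:chart-CLone}, right multiplication by $w^{-\dia*}\tilw^*(\tau)$ corresponds to moving into the frame of $t_\om$. The analogous computation for $\tau'$ and $\tilz'=t_{u^\mo(\eta)}^*$ rests on \eqref{eqn:schubert-modification} and Lemma \ref{lem:component}. With it I would write
\[
S_\F^\circ(\tilz')^{\nbl_{\bfa_{\tau'}}}\tilw^*(\tau') = \bigl(S^\circ_\F((w_0u^\dia)^*)\, \tils'^{*}\bigr)^{\nblz}
\]
for an explicit $\tils'\in\tilW^\cJ$. Comparing $\tilw(\tau)$ with $\tilw(\tau')$ using \eqref{change-of-LAP} and the Setup, I expect
\[
t_\eta w^{-\dia*}\tilw^*(\tau) = X\, \tils'^{*},
\]
where $X\in \tilW^{\vee,\cJ}$ is trivial away from the coordinate $j_0+1$ and equals (a translate of) $s_{\al_{i_0k_0}}$ there. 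The shift $j_0\mapsto j_0+1$ comes from the $\pi^\mo$ twists in $\tau,\tau'$, and it is why conditions (2) and (3) are imposed at $j_0+1$. Since both loci already lie in $\Fl^{\nblz,\cJ}$ after translation, the monodromy conditions match automatically. It therefore suffices to show that $u^{\dia*}AX$ lies in $\cI_\F\, (w_0u^\dia)^*\,\cI_\F$, up to the normalization built into $N_{(w_0u^\dia)^*}$.

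\emph{Coordinates $j\neq j_0+1$.} Here $X_j=e$, so $u^{\dia*}A$ lies in $u^{\dia*}\Ad_{w_0}(N_{(w_0u^\dia)^*})$. This is exactly the parametrization of the Schubert cell from \cite[Proposition 4.2.13]{LLLMlocalmodel}, so there is nothing to prove.

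\emph{Coordinate $j_0+1$.} I must find $I_1,I_2\in\cI(R)$ with
\[
u^{\dia*}A\, s_{\al_{i_0k_0}} = I_1\, u^{\dia*} A'\, I_2, \qquad A'\in \Ad_{w_0}N_{(w_0u^\dia)^*}(R).
\]
The plan is a block Gaussian elimination on the rows and columns in $[i_0,k_0]$ of $A s_{\al_{i_0k_0}}$.
\begin{itemize}
\item Condition (3) says the trailing principal minors on $\{k_0-i+1,\dots,k_0\}$ are units in $R\DB{v}$. This gives an LU-type decomposition of that block over $R\DB{v}$.
\item Condition (2), that $A_{-\al}$ is a unit, handles the remaining corner entry that $s_{\al}$ moves onto the diagonal.
\item The resulting unipotent and diagonal factors have entries in $R\DB{v}$ with invertible diagonal. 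After conjugation by $u^{\dia*}$, the degree bounds on $N_{(w_0u^\dia)^*}$ (from \cite[Corollary 4.2.12]{LLLMlocalmodel}, as used in Lemma \ref{lem:V(c)}) should make the left factor upper triangular mod $v$, so that it lies in $\cI$.
\end{itemize}

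\emph{Openness.} Conditions (2) and (3) are non-vanishing conditions on finitely many regular functions, namely leading coefficients of units. Hence they cut out an open subscheme of $V(c)\tilw^*(\tau)$, which by Lemma \ref{lem:V(c)} is an affine space. Openness is therefore immediate.

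\emph{Main obstacle.} The hardest step is the factorization at $j_0+1$. The difficulty is verifying that the elimination factors respect the Iwahori condition mod $v$ after conjugation by $u^{\dia*}$, given the $v$-adic degree bounds on the entries of $\Ad_{w_0}N$. I would first work out the case $k_0-i_0=1$, where only condition (2) matters, and then induct on $k_0-i_0$ using condition (3) to peel off one row and column at a time.
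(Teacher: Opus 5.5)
There is a genuine gap. Your outline has the right shape: cancel the common factors, note that the two loci differ only at $j_0+1$, then eliminate using (2) and (3). But the reduction at $j_0+1$ aims at the wrong set, so the criterion you set out to verify would not prove the lemma.

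\emph{The target set.} The cell $S^\circ_\F((w_0u^\dia)^*)$ is parametrized by $(w_0u^\dia)^*N_{(w_0u^\dia)^*}=u^{\dia*}w_0N_{(w_0u^\dia)^*}$. So what you must hit is $\cI u^{\dia*}\Ad_{w_0}(N_{(w_0u^\dia)^*})=\cI u^{\dia*}w_0\cI w_0$, not $\cI(w_0u^\dia)^*\cI$. The same stray $w_0$ affects your frame: with $\tils'^{*}=\tilw_h^{-*}u^{-\dia*}\tilw^*(\tau')$ one finds $X=\pi^{-1}(s_\al)w_0$. Your ``nothing to prove'' away from $j_0+1$ is right only in the paper's frame, which compares $u^{\dia*}\Ad_{w_0}(N)\pi^{-1}(s_\al)$ with $u^{\dia*}\Ad_{w_0}(N)$.

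\emph{Why your criterion fails.} The target set is stable under right multiplication by the opposite Iwahori $w_0\cI w_0$, but not by $\cI$. So a factorization $u^{\dia*}As_\al=I_1u^{\dia*}A'I_2$ with arbitrary $I_2\in\cI(R)$ proves nothing. Take $\GL_2$ and $u^\dia=e$, so that $\Ad_{w_0}(N)=\ov{U}(\F)$. Then $s_\al=w_0$ lies in $\cI\,\ov{U}(\F)\,\cI$ but not in $\cI\,L^+\ov{U}$. Your criterion would therefore accept $A=1$, where (2) fails. The same objection applies to the general column operations in your block elimination. The correct reduction, as in the paper, uses $\Ad_{w_0}(N)\subset L^+\ov{U}\subset w_0\cI w_0$ to get $\cI u^{\dia*}\Ad_{w_0}(N)=\cI u^{\dia*}L^+\ov{U}$. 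It then suffices that $u^{-\dia*}\cI u^{\dia*}\cdot As_\al$ meets $L^+\ov{U}$, which is a one-sided problem involving row operations only.

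\emph{The step you left open.} Your main obstacle is not controlled by the degree bounds on $N_{(w_0u^\dia)^*}$; these play no role. It is controlled by the alcove of $u^\dia$. The $\be$-entries of $\Ad_{u^{-\dia*}}(\cI)$ lie in $v^{-m_{u^\dia,\be}}\F\DB{v}$. Since $u^\dia$ is restricted, hence dominant, $m_{u^\dia,\be}\ge 0$ for all $\be\in\Phi^+$. Thus $B(R\DB{v})\subset\Ad_{u^{-\dia*}}(\cI)$.

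\emph{Finishing the argument.} The lemma then follows from $As_\al\in B(R\DB{v})\cdot L^+\ov{U}$, i.e.\ from a UL decomposition over $R\DB{v}$. Such a decomposition exists if and only if every trailing principal minor of $As_\al$ is a unit. Because $A$ is lower unipotent, these minors are $\pm1$ except on $\{k_0-i+1,\dots,n\}$ for $1\le i\le k_0-i_0$. There they equal the minor in (2) when $i=1$ and the minors in (3) otherwise. This is exactly what the paper's row reduction carries out. Incidentally, the unit conditions concern constant terms, i.e.\ $A\bmod v$, not leading coefficients.
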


\begin{proof}
    Since $(\tilz')^* = u^{-\dia} \tilw_h^\mo w_0 u^\dia$, \eqref{eqn:schubert-modification} shows that
    \begin{align*}
        S_\F^\circ(\tilz')^{\nblat} = (S^\circ_\F((w_0 u^\dia)^*) (u^{-\dia}\tilw_h^\mo)^* )^{\nblat} \subset u^{\dia*} \Ad_{w_0}(N_{(w_0 u^\dia)^*}) t_{\eta} u^{-\dia*}.
    \end{align*}
    We compare this multiplied by $\tilw^*(\tau')$ with the description of $V(c)\tilw^*(\tau)$. After canceling common factors, we get
    \begin{align*}
        u^{\dia*}  \Ad_{w_0}(N_{(w_0 u^\dia)^*}) \pi^\mo(s_\al), \ \ u^{\dia*}  \Ad_{w_0}(N_{(w_0 u^\dia)^*}).
    \end{align*}
    (Note that we ignore the monodromy condition for now. It can be imposed after understanding the intersection.) They are different only at the embedding $(j_0+1)$. Thus, we work with the case $f=1$ and $\cJ=\{*\}$ until the end of the proof. Consider $A\in \Ad_{w_0}(N_{(w_0 u^\dia)^*})$ satisfying the conditions (1--3). Since we are working with subvarieties of $\Fl$, we need to show that
    \begin{align*}
        \cI u^{\dia *} A s_\al \cap \cI u^{\dia*}  \Ad_{w_0}(N_{(w_0 u^\dia)^*})\neq \emptyset. 
    \end{align*}
    Note that
    \begin{align*}
        \cI u^{\dia*}  \Ad_{w_0}(N_{(w_0 u^\dia)^*}) = \cI u^{\dia*} w_0 \cI w_0 \supset \cI u^{\dia*}  L^+\ov{U} 
    \end{align*}
    and $\Ad_{w_0}(N_{(w_0 u^\dia)^*}) \subset L^+\ov{U}$. Therefore, $\cI u^{\dia*}  \Ad_{w_0}(N_{(w_0 u^\dia)^*}) = \cI u^{\dia*}  L^+\ov{U} $ and we only need to show that 
    \begin{align*}
        u^{-\dia*}\cI u^{\dia *} A s_\al \cap   L^+\ov{U}\neq \emptyset.
    \end{align*}
    We interpret the left multiplication by $u^{-\dia*}\cI u^{\dia *}$ as certain row operations. We verify that $A s_\al$ can be transformed into a lower triangular matrix by performing such row operations.

    At the level of $R$-points, $\Ad_{u^{-\dia*}}(\cI)$ is the matrix whose diagonal entries are in $R\DB{v}^\times$ and for $\be=\al_{ik}\in \Phi$, its $\be$-entry is in $v^{\del_{u^\mo(\be)<0} - \RG{\nu_u,\be^\vee}}\F\DB{v}$. Note that the exponent of $v$ is equal to $-m_{u^\dia,\be}$. Since $u^\dia$ is dominant, $-m_{u^\dia,\be}<0$ if $\be\in \Phi^+$. This means that for $f(v)\in \R\DB{v}$, $u_{\be}(f(v))\in \Ad_{u^{-\dia*}}(\cI)(R)$, and the left multiplication  by $u_{\be}(f(v))$ corresponds to the row operation replacing $R_i$ by $R_i+f(v)R_j$. In fact, we will only perform such row operations and the row operation multiplying $f(v)\in R\DB{v}^\times$ to a given row. 

    Since $A s_\al$ is just $A$ with $i_0$th and $k_0$th columns exchanged, the condition $A_{-\al}\in \F\DB{v}^\times$ means that we can perform row operations given by $u_{\be}(\F\DB{v})$ for $\be=\al_{tk_0}$ with $i_0\le t < k_0$ to remove $\be$-entry of $A s_\al$. This will introduce new upper triangular entries at $\be'=\al_{tk}$ for $t<k<k_0$, but they can be removed by performing row operations given by $u_{\be'}(R\DB{v})$, using the fact that $(A s_{\al})_{kk}=1$. At the end, we obtain a lower triangular matrix whose entries are in $R\DB{v}$. It remains to show that the diagonal entries are in $R\DB{v}^\times$, so that we can multiply elements in $R\DB{v}^\times$ on each row to turn the matrix into unipotent lower triangular. This follows from condition (3) noting that the row operations we have performed do not change the relevant minors.
\end{proof} 

\begin{defn}
    Let $\be=\al_{ki}\in \Phi^-$. We define $\fD_{\be}$ to be the set of pairs $(\be_1,\be_2)\in (\Phi^-)^2$ such that $\be_1=\al_{ti}$ and $\be_2=\al_{kt}$ and $P_{\be}$ to be the set of tuples $(\be_1,\dots,\be_s)\in (\Phi^-)^s$ of size $s\ge 1$ such that $\be_{m}=\al_{t_{m}t_{m-1}}$ with $t_{0}=i$ and $t_s = k$. In particular, $\sum_{m=1}^s \be_m = \be$.
\end{defn}

\begin{rmk}\label{rmk:open-conditions}
    We make the conditions (2, 3) in Lemma \ref{lem:open-locus} explicit. Note that these conditions depend only on $A\mod v$. For $\be\in \Phi^-$, we write $a_\be := (A\mod v)_\be$. Then condition (2) says that $a_{-\al}\in R^\times$.

    Now fix $i$ between $2$ and $k_0-i_0$. The $i\times i$-minor modulo $v$ in condition (3) is (up to sign) the determinant of the following matrix $M_i$ obtained by taking rows in $\{ k_0-i+1, \dots, k_0 \}$ and columns in $\{i_0, k_0-i+1, \dots, k_0-1 \}$ of $A \mod v$.
    \begin{align*}
       M_i:= \pma{a_{(k_0-i+1)i_0} & 1 & 0 & \text{\rotatebox[origin=c]{170}{$\cdots$}} & 0 \\
        a_{(k_0-i+2)i_0} & a_{(k_0-i+2)(k_0-i+1)}& 1 & \text{\rotatebox[origin=c]{170}{$\dots$}} & 0 & \\
    \text{\rotatebox[origin=c]{90}{$\cdots$}} & \text{\rotatebox[origin=c]{90}{$\cdots$}} & & 
    & 1 \\
        a_{-\al} & a_{k_0(k_0-i+1)} & \cdots & & a_{k_0(k_0-1)}}
    \end{align*}
    By applying the cofactor expansion with respect to the first row, we have
    \begin{align*}
        \det(M_i) = a_{(k_0-i+1)i_0} \det(M'_{i-1}) - \det(M_{i-1})
    \end{align*}
    where $M'_{i-1}$ is the $(i-1)\times (i-1)$ matrix obtained by removing the first row and column of $M_i$. For $\be:= \al_{k_0(k_0-i+1)}$, one can inductively check that 
    \begin{align*}
       X_{\be}:= \det(M'_{i-1}) = \sum_{(\be'_1,\dots,\be'_s)\in P_{\be}} (-1)^{i-1-s}a_{\be'_1}\dots a_{\be'_s}.
    \end{align*}
    Now using the inductive formula for $\det(M_i)$, we can show that
    \begin{align*}
        (-1)^{i-1}\det(M_i) = a_{-\al} + \sum_{\substack{(\be_1,\be_2)\in \fD_{-\al} \\ \be_1=\al_{(k_0-i'+1)i_0}, i'\le i}} (-1)^{i'-1} a_{\be_1} X_{\be_2}. 
    \end{align*}
    In particular, when $i=k_0-i_0$, we have
    \begin{align*}
        \det(M_{k_0-i_0}) = \sum_{(\be_1,\dots,\be_s)\in P_{-\al}}  (-1)^{k_0-i_0-s}a_{\be_1}\cdots a_{\be_s}.
        \end{align*}
\end{rmk}

We now finish the proof of Lemma \ref{lem:int} by intersecting the open subscheme in Lemma \ref{lem:open-locus} and $V(Z_{-\al})\tilw^*(\tau)$. By the second claim of Lemma \ref{lem:V(c)}, we can view $Z_{-\al}$ as a function on $\Ad_{w_0}(N^{\nblat'}_{(w_0u^\dia)^*})$. Then it remains to show that the vanishing locus of $Z_{-\al}$ is not disjoint from the open locus given by Lemma \ref{lem:open-locus}.

Suppose that $f>1$. Then $Z_{-\al}$ is a coordinate of the $j_0$th matrix of $\Ad_{w_0}(N^{\nblat'}_{(w_0u^\dia)^*})$ while the open conditions are imposed on $(j_0+1)$th matrix. Thus, the closed and open loci are not disjoint.

Suppose that $f=1$. Let $A\in \Ad_{w_0}(N^{\nblat'}_{(w_0u^\dia)^*})$ be the universal matrix and $c_\be$ be the top-degree coefficient of $A_{\be}$ for $\be\in \Phi^-$. By \cite[Proposition 4.3.2]{LLMPQ-CL}, $c_\be$'s are precisely the affine coordinates of $V(c)$. Recall the definition of $Z_{-\al}$ (Lemma 4.1.3 in \loccit):
\begin{align}\label{eqn:Z}
    Z_{-\al} = (m_{-\al} - \RG{\bfa_\tau,-w^\mo(\al)^\vee} )c_{-\al} + \sum_{\substack{(\be_1,\be_2) \in \fD_{-\al}
    \\
    \be_1=\al_{(k_0-i+1)i_0}}} (m_{\be_2} + \kappa_{\be_2} - \RG{\bfa_\tau,w^\mo(\be_2)^\vee}) c_{\be_2}  \sum_{(\be'_1,\dots,\be'_s)\in I_{\be_1}}(-1)^{i-1-s} c_{\be'_1}\cdots c_{\be'_s}.
\end{align}
where $I_{\be_1}$ is the subset of $P_{\be_1}$ whose elements satisfy $\sum_{t=1}^s\del_{w^\mo(\be'_t)>0} = \del_{w^\mo(\be_1)>0}$. (Note that we used $\del_{w^\mo(\be'_t)<0} = 1- \del_{w^\mo(\be'_t)>0}$.)

Suppose that $m_{u^\dia,\al}>0$. This means that $\deg A_{-\al}>0$. In particular, $c_{-\al} \neq a_{-\al}$. By \cite[Proposition 4.3.2]{LLMPQ-CL}, $a_{-\al}$ is determined by $c_{-\al}$ and $c_{\be}$ for $-\al<\be$. By appealing to a more concrete calculation in \cite[Proof of Theorem 4.2.4]{LLLMlocalmodel}, we can see that the expression for $a_{-\al}$ contains the monomial $c_{\al_{k_0(k_0-1)}}c_{\al_{(k_0-1)(k_0-2)}}\dots c_{\al_{(i_0+1)i_0}}$.

In contrast, the monomial $c_{\al_{k_0(k_0-1)}}\dots c_{\al_{(i_0+1)i_0}}$ does not appear in the right hand side of \eqref{eqn:Z} by Lemma \ref{lem:partition}(1). This also means that all monomials appearing there are divisible by $c_{-\be}$ for some $\be \in \Phi^+\backslash\Del$. Thus, the vanishing locus of $c_{-\be}=0$ for all $\be \in \Phi^+\backslash\Del$ is contained in  $V(Z_{-\al})$, and the restriction of $a_{-\al}$ on it is given by a (non-zero) scalar multiple of $c_{\al_{k_0(k_0-1)}}\dots c_{\al_{(i_0+1)i_0}}$. Moreover, for $2\le i < k_0-i_0$, $(-1)^{i-1}\det(M_i) = a_{-\al}$ and $\det({M_{k_0-i_0}}) = C c_{\al_{k_0(k_0-1)}}\dots c_{\al_{(i_0+1)i_0}}$ where $C\neq 0$ by the genericity condition.  
This shows that the triple intersection contains the locus where $c_{-\be}\neq 0$ for $\be\in \Del$ and $c_{-\be}=0$ otherwise.

Finally, we consider the case of $m_{u^\dia,\al}=0$. Then $A_{\be} = c_{\be} = a_{\be}$ for all $-\al \le \be$. Using Lemma \ref{lem:partition}(2) below, we have
\begin{align*}
    (m_{-\al} - \RG{\bfa_\tau,-w^\mo(\al)^\vee} )^\mo Z_{-\al} = a_{-\al} +  \sum_{\substack{(\be_1,\be_2) \in \fD_{-\al}
    \\
    \be_1=\al_{ti_0}}} (-1)^{t+k_0} a_{\be_1} Y_{\be_2} 
\end{align*}
where we define
\begin{align*}
    Y_{\be_2} = \sum_{(\be_1',\dots,\be'_s)\in P_{\be_2}} (-1)^{t+k_0-s} \frac{m_{\be'_s}+\kappa_{\be_s'} - \RG{\bfa_\tau,w^\mo(\be'_s)^\vee}}{m_{-\al} - \RG{\bfa_\tau,-w^\mo(\al)^\vee} } a_{\be_1'}\dots a_{\be'_s}.
\end{align*}
We need to compare the vanishing of $Z_{-\al}$ with non-vanishing of $\det(M_i)$ we computed in Remark \ref{rmk:open-conditions}.

We can think of $Z_{-\al}$ and $\det(M_i)$ for $2 \le i \le k_0-i_0$ as linear functions in variables $a_{\be_1}$ and treat $X_{\be_2}$ and $Y_{\be_2}$ as coefficients where $(\be_1,\be_2)\in \fD_{-\al}$. We can also assume that $X_{\be_2}$ and $Y_{\be_2}$ are invertible (e.g.~by plugging in generic values for $a_{\be'}$ with $\be'\ge \be_2$). It suffices to show that these functions are linearly independent. Using linear combinations between $\det(M_i)$'s, we can replace $\det(M_i)$ for $2<i< k_0-i_0$ by $a_{(k_0-i+1)i_0}X_{k_0(k_0-i+1)}$. When $i=2$, $-\det(M_2) = a_{-\al} + a_{(k_0-1)i_0}X_{k_0(k_0-1)}$.  Similarly, we can also replace $Z_{-\al}$ by $ a_{-\al} + a_{(k_0-1)i_0}Y_{k_0(k_0-1)}$. Then we only need to show that $a_{\al} + a_{(k_0-1)i_0}X_{k_0(k_0-1)}$ and $a_{\al} + a_{(k_0-1)i_0}Y_{k_0(k_0-1)}$ are linearly independent, which follows from $X_{k_0(i_0+1)}\neq Y_{k_0(i_0+1)}$ (this step requires $\bfa_\tau$ to be $n$-generic). This finishes the proof of Lemma \ref{lem:int} and thus that of Proposition \ref{prop:int}. \qed

\begin{lem}\label{lem:partition}
    \begin{enumerate}
        \item Suppose that $m_{u^\dia,\al}>0$. Then for $\be_t':= \al_{(i_0+t)(i_0+t-1)}$, we have
        \begin{align*}
            \sum_{t=1}^{k_0-i_0-1} \del_{w^\mo(\be'_t)>0} > \del_{w^\mo(\al_{(k_0-1)i_0})>0}.
        \end{align*}

        \item Suppose that $m_{u^\dia,\al}=0$. For $(\be_1,\be_2)\in \fD_{-\al}$ and $(\be'_1,\dots, \be'_s)\in I_{\be_1}$, we have
    \begin{align*}
       \sum_{i=1}^s \del_{w^\mo (\be'_i)>0} = \del_{w^\mo(\be_1)>0}.
    \end{align*}    
    \end{enumerate}
\end{lem}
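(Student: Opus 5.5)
The plan is to turn both statements into statements about the integers $m_{w^\dia,\be}$, using Remark \ref{rmk:superadd}, and then use the constraints on $(w,u)$ from Remark \ref{rmk:special}. We may work at the single embedding $j_0$ and drop it from the notation.

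\emph{Reduction to $m$'s.} Write $x_t:=w^\mo(i_0+t)$ for $0\le t\le k_0-i_0$. For a negative root $\be=\al_{ki}$ with $k>i$, the indicator $\del_{w^\mo(\be)>0}$ is $1$ exactly when $w^\mo(k)<w^\mo(i)$, i.e.\ a descent of $w^\mo$. Since $w^\dia$ is restricted, Remark \ref{rmk:superadd} gives $\RG{\nu_w,\al_i^\vee}=\del_{w^\mo(\al_i)<0}$ for every simple root $\al_i$. Expanding $\RG{\nu_w,\ga^\vee}$ along the simple roots in a positive root $\ga$ then yields
\begin{align*}
m_{w^\dia,\ga}=\sum_{\al_i\le \ga}\del_{w^\mo(-\al_i)>0}-\del_{w^\mo(-\ga)>0}\ \ge 0 ,
\end{align*}
where nonnegativity follows from superadditivity together with $m_{w^\dia,\al_i}=0$.

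Now take a path $(\be'_1,\dots,\be'_s)\in P_{\be}$ and apply this formula to each piece $-\be'_t$ and to $-\be$. This gives
\begin{align*}
\sum_t \del_{w^\mo(\be'_t)>0}-\del_{w^\mo(\be)>0}=m_{w^\dia,-\be}-\sum_t m_{w^\dia,-\be'_t}.
\end{align*}
For the path in (1), every piece is simple, so the left side of (1) minus its right side equals $m_{w^\dia,\al_{i_0(k_0-1)}}$ (in positive-root notation). For (2), where $\be_1=\al_{(k_0-i+1)i_0}$ with $i\ge 2$, it suffices to prove that $m_{w^\dia,\ga}=0$ for every positive root $\ga=\al_{ab}$ with $i_0\le a<b\le k_0-1$. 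Indeed, the sub-roots of $-\be_1$ are of this form, so both $m_{w^\dia,-\be_1}$ and every $m_{w^\dia,-\be'_t}$ vanish, and the identity above gives the equality in (2) for every path in $P_{\be_1}$.

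\emph{Proof of (1).} Remark \ref{rmk:special}(1) gives $m_{w^\dia,\al}=m_{u^\dia,\al}+1\ge 2$. Optimal superadditivity with $\al=\al_{i_0(k_0-1)}+\al_{(k_0-1)k_0}$ and $m_{w^\dia,\al_{(k_0-1)k_0}}=0$ gives $m_{w^\dia,\al}\le m_{w^\dia,\al_{i_0(k_0-1)}}+1$. Hence $m_{w^\dia,\al_{i_0(k_0-1)}}\ge 1$, which is exactly the strict inequality.

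\emph{Proof of (2).} This is the main step, and it uses the case (a) normalization from Setup \ref{setup}. Now $m_{w^\dia,\al}=1$. Case (a) says that $x_0<x_{L}$, where $L=k_0-i_0$, and that no intermediate $x_t$ with $0<t<L$ lies in $(x_0,x_L)$. Since $x_0<x_L$, the formula above shows that $m_{w^\dia,\al}$ equals the number of descents of the sequence $x_0,\dots,x_L$, so there is exactly one descent.

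I then classify the possible shapes. The sequence increases from $x_0$ to some peak, drops once, and then increases up to $x_L$. Every intermediate term must avoid $(x_0,x_L)$. So the terms before the drop lie above $x_L$, except that $x_0$ itself may continue directly to values $>x_L$. The terms after the drop lie below $x_0$, and the sequence then jumps straight to $x_L$.

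With this shape, take any sub-chain $x_a,\dots,x_b$ with $b\le L-1$. If it contains no descent, its endpoints are increasing and $m=0$. If it contains the descent, then $x_b<x_a$ (a post-drop value lies below every pre-drop value), so the indicator equals $1$ and again $m=0$. This gives the required vanishing.

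The step most likely to need care is pinning down the sign conventions. One has to check that $\del_{w^\mo(\be)>0}$ for negative $\be$ really corresponds to descents in the way used above, and that the case (a) description in Remark \ref{rmk:special}(1) (stated for $w^\mo_{j_0}$) matches. If the inequalities come out reversed, the same argument applies with ascents in place of descents.
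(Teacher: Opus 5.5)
Your proof is correct. Part (1) is the paper's argument: superadditivity on $\al=\al_{i_0(k_0-1)}+\al_{(k_0-1)k_0}$ forces $m_{w^\dia,\al_{i_0(k_0-1)}}\ge 1$. Expanding $\RG{\nu_w,\cdot}$ along simple roots then turns this into the inequality.

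For part (2) you take a different route. The paper stays at the level of superadditivity:
\begin{itemize}
\item from $m_{u^\dia,\al}=0$ and nonnegativity it gets $m_{u^\dia,-\be_1}=0$;
\item it transfers this to $m_{w^\dia,-\be_1}=0$;
\item superadditivity for $w^\dia$ then kills every $m_{w^\dia,-\be'_t}$, so the same identity as in (1) gives the equality.
\end{itemize}
You instead use the case (a) normalization of Setup \ref{setup} together with Remark \ref{rmk:special}(1) to pin down the shape of $w^\mo$ on $[i_0,k_0]$. It has a single descent; the intermediate values before the descent lie above $x_L$, and those after it lie below $x_0$. From this you read off $m_{w^\dia,\gamma}=0$ for every $\gamma$ supported in $[i_0,k_0-1]$.

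The paper's version is shorter and does not care whether the pair is in form (a) or (b). However, it leaves the transfer step $m_{w^\dia,-\be_1}=m_{u^\dia,-\be_1}$ unexplained. That step follows from $w^\dia=s_{\al,1}u^\dia$, which gives $\RG{w^\dia(x),(-\be_1)^\vee}=1-\RG{u^\dia(x),(-\be_2)^\vee}$ for $x\in A_0$, combined with $m_{u^\dia,-\be_2}=0$. Your version is self-contained and gives an explicit combinatorial picture of $w^\mo$, at the cost of depending on the normalization. Your sign conventions are the right ones, so the closing hedge is unnecessary.

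Finally, statement (2) as literally written is tautological, since $I_{\be_1}$ is defined by that very equality. What both your argument and the paper's actually prove, and what is used afterwards, is the equality for every path in $P_{\be_1}$, i.e.\ $I_{\be_1}=P_{\be_1}$.
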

\begin{proof}
    For item (1), $m_{w^\dia,\al}\ge 2$. Note that  $m_{w^\dia,-\be}=0$ for $-\be\in \Del$ because $w^\dia$ is restricted. In particular, 
    \begin{align*}
        \RG{\nu_w,-\be^\vee} = \del_{w^\mo(-\be)<0} = \del_{w^\mo(\be)>0}.
    \end{align*}
    By the optimal superadditivity (Remark \ref{rmk:superadd}), we have $m_{w^\dia,\al_{(k_0-1)i_0}}>0$. Then the claim follows from
    \begin{align*}
        m_{w^\dia,-\al_{(k_0-1)i_0}} &= \RG{\nu_w,-\al_{(k_0-1)i_0}} - \del_{w^\mo(-\al_{(k_0-1)i_0})<0} \\
        &= \sum_{t=1}^{k_0-i_0-1} \RG{\nu_w, -\be_t'^\vee} - \del_{w^\mo(\al_{(k_0-1)i_0})>0} \\
        &= \sum_{t=1}^{k_0-i_0-1} \del_{w^\mo (\be'_t)>0} - \del_{w^\mo(\al_{(k_0-1)i_0})>0} > 0.
    \end{align*}

    For item (2), note that $m_{u^\dia,\al}=0$ implies $m_{u^\dia,-\be_1} = 0$ and $m_{u^\dia,-\be_1}=m_{w^\dia,-\be_1}$. In particular $m_{w^\dia,-\be'_t}=0$. Then the claim follows from a similar argument as in item (1).
\end{proof}

\subsection{Supersingular Galois representations}\label{sub:satake}
We start by recalling the following result.

\begin{thm}[{\cite[Theorem 5.3.4]{lee-satake}}]
    Let $\sig$ be a $2n$-deep Serre weight. Then there is a natural isomorphism
    \begin{align*}
        \ov{\Psi}_\sig : \cH(\sig)\risom \cO(\cC_{\sig}).
    \end{align*}
\end{thm}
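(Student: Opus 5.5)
The plan is to compare both sides with Herzig's mod $p$ Satake transform, and to compute $\cO(\cC_\sig)$ from the local charts of \S\ref{subsec:LM}. By \cite[Theorem 1.2]{HerzigSatake}, $\cH(\sig)\simeq \F[x_1,\dots,x_{n-1},x_n^{\pm}]$. Composing with the Satake transform embeds $\cH(\sig)$ into $\F[X_*(T)]$, and its image is spanned by the antidominant $T_\mu$. So it suffices to do three things: realize $\cO(\cC_\sig)$ inside a Laurent polynomial ring $\F[y_1^{\pm},\dots,y_n^{\pm}]$, identify the image with the same subring, and check that the resulting identification is natural.

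\textbf{Step 1: the ordinary locus.} Take the ordinary open substack of $\cC_\sig$, i.e.\ the extremal chart $\cX^{\eta,\tau'}_{n,\F}(\tilz')$ of Lemma \ref{lem:chart-extremal} applied to $\sig$ in place of $\sig'$. By the local model diagram \eqref{eqn:LM} and Proposition \ref{prop:chart}(1), this chart is the quotient of an affine space $U_\F$ by $T^{\vee,\cJ}$. Its global functions are therefore the $T^{\vee,\cJ}$-invariants of $\cO(U_\F)$. I expect these invariants to be exactly $\F[y_1^{\pm},\dots,y_n^{\pm}]$, where $y_i$ records the unramified part of the diagonal character $\chi_i$ in \eqref{eqn:ord-rhobar}. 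Concretely, $y_i$ should be given by the constant terms of the diagonal entries of the Frobenius matrices, multiplied over $\jj$. The remaining coordinates, which record extension classes, carry nontrivial torus weights and so contribute no invariants. Restriction to this dense open substack then gives an injection $\cO(\cC_\sig)\mono \F[y^{\pm}]$.

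\textbf{Step 2: codimension-one boundary.} Next I use normality of $\cC_\sig$, which holds after smooth pullback to the charts of Proposition \ref{prop:chart}. Because the space is normal, a function on the ordinary locus extends to all of $\cC_\sig$ exactly when it is regular at the generic points of the divisors in its complement. These divisors should be covered by colength one charts $\cX^{\eta,\tau}_{n,\F}(\tilz)$ of the kind in Lemma \ref{lem:chart-CLone}, one for each $\sig'$ with $\sig'\uparrow\sig$ adjacent to $\sig$, where $\cC_\sig$ appears as one of the two components cut out by $cZ_{-\al}=0$. On each such chart I would express the $y_i$ in the explicit coordinates of \cite{LLMPQ-CL}. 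Some quotients $y_{i+1}/y_i$ will acquire a pole along $V(c)$ or $V(Z_{-\al})$, and requiring regularity there imposes the constraints. I expect the subring cut out by all these constraints to be generated by $y_1\cdots y_i$ for $1\le i\le n-1$ together with $(y_1\cdots y_n)^{\pm1}$. That is exactly the shape of Herzig's image of $\cH(\sig)$, with $x_i=T_{-\om_i}$ corresponding to $y_1\cdots y_i$ up to a unit determined by $\sig$.

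\textbf{Step 3: the map and naturality.} I would define $\ov{\Psi}_\sig$ as the map $x_i\mapsto$ the function obtained in Step 2. To show it is natural and independent of choices, I would compare it with the mod $p$ reduction of the Satake isomorphism for the potentially crystalline stacks $\cX^{\eta,\tau}_n$. On generic fibres, Hecke eigenvalues of $\sig(\tau)$ correspond to Frobenius eigenvalues through classical local Langlands. Reducing a lattice in $\sig(\tau)$ recovers $\cH(\sig)$, because $\sig$ occurs with multiplicity one in $\osig(\tau)$.

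The main obstacle is Step 2. Normality and the Hartogs-type extension need to be justified for the algebraic stack $\cC_\sig$. One also needs every codimension-one point of the non-ordinary locus to lie in a colength one chart, and this is where the $2n$-deep hypothesis should be used. Finally, the pole orders of the $y_i$ must be computed precisely enough to pin down the subring. For that computation I would first try the explicit formulas for $c$ and $Z_{-\al}$ from \cite[Proposition 4.3.2]{LLMPQ-CL} on the $j_0$-th factor.
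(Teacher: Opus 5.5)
Your proposal has a genuine gap in Step 2. That step is not peripheral: it is where $\ov{\Psi}_\sig$ is actually constructed, by extending (normalized) $y_1\cdots y_i$ from the ordinary chart to all of $\cC_\sig$ via a Hartogs argument. This needs two things.

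(a) $\cC_\sig$ must be normal, or at least $S_2$. Proposition \ref{prop:chart} does not give this. It concerns the $\cO$-flat schemes $U^{\le\eta,\nbla}(\tilz)$ for extremal or colength-one $\tilz$. The corresponding open substacks only see the ordinary locus of $\cC_\sig$ and generic points of some intersections $\cC_\sig\cap\cC_{\sig'}$. They say nothing about normality of $\cC_\sig$ elsewhere, and you offer no other input.

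(b) The complement in $\cC_\sig$ of the ordinary chart and of the colength-one charts you analyse must have codimension at least $2$. This is a global statement about how $\cC_\sig$ is stratified, and nothing you cite gives it. Inside $\cX^{\eta,\tau_s}_{n,\F}$, the component $\cC_\sig$ is the closure of an extremal $\nabla$-stratum, and its boundary meets lower strata $S^\circ_\F(\tilz')^{\nbla}$ of all colengths. Dimensions of such strata are governed by regularity of $\tilz'$ rather than by length (cf.~Proposition \ref{prop:irr(M)}). So a divisor of the non-ordinary locus can a priori sit in a stratum of colength $\ge 2$, and you would have to show it lies in a colength-one chart for some other type. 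Invoking $2n$-deepness is not an argument.

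Without (a) and (b), you have not shown your candidate functions are global functions on $\cC_\sig$, so the map is not defined. A smaller slip in Step 1: the chart is the quotient of $T^{\vee,\cJ}\times U_\F$, not of $U_\F$, by $T^{\vee,\cJ}$. The $y_i$ come from the torus factor, and the invariants of $\cO(U_\F)$ alone are just constants.

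The way around this is to build the functions on the $\cO$-flat stack and restrict. This is the construction the paper attributes to \cite{lee-satake} (Remark \ref{rmk:functions}); the paper itself only cites the theorem. For $\tau$ with $\sig(\tau)\simeq\Ind_{B(k)}^{G(k)}\sig^{U(k)}$ one has $\cC_\sig\subset\cX^{\eta,\tau}_{n,\F}$ (Lemma \ref{lem:component}). Then $f_i$ is the reduction of a $p$-power normalization of products of Frobenius eigenvalues of the Weil--Deligne representations at potentially crystalline points of $\cX^{\eta,\tau}_n$. Existence becomes an integrality statement on $\cX^{\eta,\tau}_n$ rather than a Hartogs statement on $\cC_\sig$. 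Naturality, i.e.\ compatibility with classical local Langlands as needed for Corollary \ref{cor:factorization}, is built in.

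Your Step 3 cannot supply this naturality. Multiplicity one of $\sig$ in $\osig(\tau)$ does not give a ring map from the Hecke algebra of a lattice in $\sig(\tau)$ to $\cH(\sig)$. Prescribing $x_i\mapsto(\text{unit})\cdot y_1\cdots y_i$ only yields an abstract isomorphism.

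Once the $f_i$ exist, injectivity follows from your Step 1. One checks from Remark \ref{rmk:functions} that on the ordinary chart they restrict to multiplicatively independent monomials in the $y_j$. What remains is the upper bound $\cO(\cC_\sig)\subseteq\F[f_1,\dots,f_{n-1},f_n^{\pm1}]$. Here your divisor idea works without normality, since global functions are automatically regular at codimension-one points and you may pass to the normalization. But for each $1\le i\le n-1$ you must still produce a codimension-one point where $f_i$ vanishes, the other $f_j$ are units, and their restrictions are algebraically independent. Without that last condition leading terms can cancel, and the divisorial conditions need not cut out the monomial subring.
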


\begin{defn}\label{def:ss-rhobar}
    Let $\sig$ be a $2n$-deep Serre weight. 
    \begin{enumerate}
        \item For $\rhobar\in \cC_{\sig}(\F)$, we define $\chi_{\sig}(\rhobar)$ to be the character given by the composition 
        \begin{align*}
            \cH(\sig)\xra{\ov{\Psi}_\sig} \cO(\cC_{\sig}) \xra{\ev_{\rhobar}} \F.
        \end{align*}
        \item For $i=1,\dots, n$, we define $f_i := \ov{\Psi}_\sig(T_n)$. We say that $\rhobar \in \cC_{\sig}(\F)$ is \textit{ordinary} (resp.~\textit{supersingular}) \textit{with respect to $\sig$} if $f_i(\rhobar) \neq 0$ (resp.~$f_i(\rhobar)=0$) for all $i=1,\dots, n-1$. We define $\cC_\sig^\ord$ (resp.~$\cC_\sig^\ss$) to be the non-vanishing (resp.~vanishing) locus of $f_i$ for all $i=1,\dots, n-1$.
    \end{enumerate}
\end{defn}

\begin{rmk}\label{rmk:functions}
        Let $\sig$ and $\tau$ be as in Setup \ref{setup}. The functions $f_i\in \cO(\cC_{\sig})$ can be defined as restrictions of functions on $\cX^{\eta,\tau}_{n}$ computing products of normalized Frobenius eigenvalues of Weil--Deligne representations attached to $\rho \in \cX^{\eta,\tau}_{n}(\cO)$. More precisely, consider $\rho \in \cX^{\eta,\tau}_n(\tilz)(\cO)$ for some $\tilz \in \Adm^\vee(\eta)$ with a preimage $(A_j)_\jj \in \tld{U}^{\eta,\nblat}(\tilz)$. Let $\ov{A}_j$ be the matrix obtained by taking diagonal entries of $A_j\mod v$. Then $f_i(\rhobar)$ is equal to the mod $p$ reduction of the top left $i\times i$ minor of $(\prod_{j=f-1}^0 w_j \ov{A}_j w_j^\mo)^\mo$ multiplied by $p^{fi(2n-i-1)/2}$. See \cite[\S4.2]{lee-satake} for details.
\end{rmk}

The following result justifies the usefulness of the triple intersection in Proposition \ref{prop:int}.

\begin{prop}\label{prop:ord-ss}
    We follow Setup \ref{setup}.
    \begin{enumerate}
        \item We have $\cC^{\tau'}_{\sig'} \subset \cC_{\sig'}^{\ord}$.
        \item Suppose that $w^\dia$ is special. Then $\cC_\sig^\tau(\tilz)\cap \cC^{\tau}_{\sig'}(\tilz) \cap \cC_{\sig'}^{\tau'}(\tilz') \subset \cC_\sig^\ss$.
    \end{enumerate}
\end{prop}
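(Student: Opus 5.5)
For both parts I plan to compute the functions $f_i$ on the explicit local charts through Remark \ref{rmk:functions}. Up to the explicit power of $p$, $f_i(\rhobar)$ is the reduction of the top-left $i\times i$ minor of $(\prod_{j} w_j\ov{A}_jw_j^\mo)^\mo$, where $\ov{A}_j$ is the diagonal of $A_j \bmod v$ and $A_j$ is a point of the chart. Everything then reduces to controlling the $v$-adic valuations of the diagonal entries of the Frobenius matrices on each chart.

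\emph{Part (1).} Here $\tilz'$ is extremal, and by Remark \ref{rmk:S-in-U} the chart $U^{\eta,\nbl_{\bfa_{\tau'}}}(\tilz')$ is the Schubert cell of $t_{u^\mo(\eta)}$ (in dual form). Over $\cO$, the matrices in this cell are $D\cdot(v+p)^{\eta'}\cdot(\text{unipotent})$ after conjugating by $u$, with $D$ diagonal in $T^\vee$. The diagonal of $A_j \bmod v$ therefore equals $p^{\eta'_j}$ times units. I will check that, after conjugating by $w_j=u_j$, the top-left $i\times i$ minor of the inverse product has $p$-valuation exactly $fi(2n-i-1)/2$, so that the normalized value is a unit. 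The point to verify is that the ordering of $\eta$ placed by $u$ puts the smallest exponents in the first $i$ slots; this is the standard fact that the extremal shape $t_{u^\mo(\eta)}$ is the ordinary shape for $\sig'$. It follows that $f_i\neq0$ on all of $\cC^{\tau'}_{\sig'}(\tilz')$ for every $i$. Since the $f_i$ are functions on $\cC_{\sig'}$, this gives inclusion in $\cC^\ord_{\sig'}$.

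\emph{Part (2).} Use the chart for $\tau$ at $\tilz$, which is colength one at $j_0$. On $\cC_\sig^\tau(\tilz)\cap\cC_{\sig'}^\tau(\tilz)$ we have $c=Z_{-\al}=0$ in the special fiber. I will compute the diagonal of $A_{j_0}\bmod v$ on the chart of Proposition \ref{prop:chart}(2). Compared with the extremal shape $t_{w^\mo(\eta)}$ attached to $\sig$, the $p$-valuations of the diagonal entries in rows $i_0$ and $k_0$ are redistributed, and one of them involves $c$ (or $Z_{-\al}$) as a factor. For $j\ne j_0$ the shape is extremal, and the diagonal valuations are those of $t_{w^\mo(\eta)}$, which give exactly the normalizing power. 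So, after the normalization by $p^{fi(2n-i-1)/2}$, each $f_i$ restricted to the chart equals a unit times $c$ for exactly those $i$ with $w_{j_0}^\mo$-image separating $i_0$ and $k_0$, namely $\#(\{1,\dots,i\}\cap\{w(i_0),w(k_0)\})=1$ at $j_0$. For the remaining $i$ it is a unit.

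Specialness is what makes every $i$ fall into the first case. By Remark \ref{rmk:special}(2), with $i_0=1$ and $k_0=n$ in case (a), $w$ sends $1,n$ to $t_0,t_0+1$. That only makes $f_{t_0}$ vanish, so I need to recheck what the correct criterion is. I expect the right statement to be that $f_i$ is divisible by $c$ whenever the minor involves the $(i_0,k_0)$ block: with $\al$ the highest root $\al_{1n}$, the reduction mod $v$ of the colength-one matrix has a non-triangular $2\times 2$ block whose determinant is $c Z_{-\al}$-type. Hence either every top-left minor of the product contains a factor vanishing on $V(c)$, or, for the product over $j$ of conjugates, the full flag of minors degenerates. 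I will verify this against the explicit universal matrix of \cite[Proposition 3.1.2, 4.3.2]{LLMPQ-CL}, i.e.\ $A=u^\mo v^\eta Vw$ on $V(c)$ as in Lemma \ref{lem:V(c)}.

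\emph{Main obstacle.} The main obstacle is exactly this bookkeeping of $p$-valuations of diagonal minors on the colength-one chart, and showing that $\al_{i_0k_0}\notin\Phi_M^+$ for every proper Levi forces all $n-1$ functions $f_i$ to acquire a vanishing factor. If the $c$-factor alone does not suffice, I would also use $Z_{-\al}=0$, and the divisibility of $a_{-\al}$ from the proof of Lemma \ref{lem:int}, to kill the remaining minors.
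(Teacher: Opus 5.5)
Your part (1) matches the paper, which reads it off Proposition \ref{prop:chart}(1) via Remark \ref{rmk:functions}. Part (2), however, has a genuine gap. The decisive step, namely deciding which $f_i$ acquire the factor $c$, is left unresolved. Your stated criterion $\#(\{1,\dots,i\}\cap\{w(i_0),w(k_0)\})=1$ is computed in the wrong frame, which is why it seemed to kill only $f_{t_0}$.

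In Remark \ref{rmk:functions} the diagonal is taken of the \emph{conjugated} matrices $w_j\ov{A}_jw_j^{-1}$. By \cite[Proposition 3.1.2]{LLMPQ-CL} one has $w_{j_0}A^{(j_0)}w_{j_0}^{-1}=u_{-\al}(c)\,s_\al(v+p)^\eta V$ with $V$ lower unipotent; equivalently $A=u^{-1}(\cdots)w$ and $wu^{-1}=s_\al$. So in the frame where the minors are taken, the two slots whose valuations are redistributed are $i_0$ and $k_0$ themselves. Under specialness these are $1$ and $n$. Remark \ref{rmk:special}(2) describes the alcove, not the slots in this computation.

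Concretely, the diagonal entries mod $v$ are:
\begin{itemize}
\item in slot $1$, $d$, the constant term of the $(1,1)$ entry;
\item in slot $m$ for $2\le m\le n-1$, $c_mp^{n-m}$ with $c_m\in\cO^\times$;
\item in slot $n$, $c$.
\end{itemize}
Since $f_n$ is a unit, $v_p(cd)=n-1$. Since $\rhobar\in V(c)$, any lift has $v_p(c)>0$, hence $v_p(d)<n-1$. Every top-left $i\times i$ minor with $1\le i\le n-1$ contains slot $1$ but not slot $n$. So after the normalization by $p^{fi(2n-i-1)/2}$ its valuation is $n-1-v_p(d)>0$, i.e.\ $f_i(\rhobar)=0$ for all $i<n$. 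This is exactly where $(i_0,k_0)=(1,n)$ is used.

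Your fallback ideas would not help. Only the diagonal of $A\bmod v$ enters Remark \ref{rmk:functions}. So the off-diagonal $2\times 2$ block and the divisibility of $a_{-\al}$ from the proof of Lemma \ref{lem:int} are irrelevant. The vanishing of $Z_{-\al}$ only serves to place $\rhobar$ on $\cC_\sig$, where the $f_i$ are defined. Your heuristic that $f_i$ is a unit times $c$ is in fact right, since $p^{n-1}/d$ is a unit multiple of $c$, but only once the slots are identified correctly.
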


\begin{proof}
    Recall that $\sig(\tau')\simeq \Ind_{B(k)}^{G(k)}[\tld{\lam}']$ where $\tld{\lam}'=(\sig')^{U(k)}$. For $\rhobar \in \cC_{\sig'}^{\tau'}(\tilz')(\F)$, we can compute $f_i(\rhobar)$ by finding a lift $\rho\in \cX^{\eta,\tau'}_{n}(\tilz')(\cO)$ and following Remark \ref{rmk:functions}. Then item (1) follows from the explicit local chart Proposition \ref{prop:chart}(1).

    For item (2), we repeat the above process using $\tau$ instead of $\tau'$ and Proposition \ref{prop:chart}(2). Note that the specialness condition means $(i_0,k_0)=(1,n)$. Since $\rhobar\in \cC_\sig^\tau(\tilz)\cap \cC^{\tau}_{\sig'}(\tilz)(\F)$, we can choose $\rho$ lifting $\rhobar$ with preimage $(A_j)_{\jj}\in \tld{U}^{\eta,\nblat}(\tilz)$. Using \cite[Proposition 3.1.2]{LLMPQ-CL}, we have
\begin{align*}
w_{j_0}A\ix{j_0}w^\mo_{j_0} = u_{-\al}(c)s_{\al}(v+p)^\eta V
\end{align*}
where $V$ is lower-triangular unipotent with entries in $\cO[v]$. Note that the function $c$ denotes $n$th diagonal entry of $w_{j_0}A\ix{j_0}w^\mo_{j_0}$. For $2\le m \le n-1$, its $m$th diagonal entry is of the form $c_m(v+p)^{n-m}$ where $c_m\in \cO^\times$, and we denote by $d$ the constant term of its first diagonal entry. Since $f_n$ is a unit, the computation of $f_n(\rhobar)$ as explained in Remark \ref{rmk:functions} shows that $cd\in p^{n-1}\cO^\times$.  Since $\rhobar$ is in the vanishing locus of $c$, this means that the $p$-adic valuation of $d$ is less than $n-1$. Again using Remark \ref{rmk:functions}, this implies that $f_i(\rhobar)=0$ for $i=1,\dots,n-1$.
\end{proof}


\begin{cor}\label{cor:ss}
    Let $\sig=F_{(w^\dia,\om)}$ be a $(3n-4)$-deep Serre weight. If $w^\dia$ is special, then $\cC_{\sig}^\ss$ is of codimension one, i.e.~$\dim \cC_{\sig}^\ss = \dim \cC_\sig -1$. 
\end{cor}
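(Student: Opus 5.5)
To prove Corollary \ref{cor:ss}, I will establish the two inequalities $\dim \cC_\sig^\ss \le \dim \cC_\sig - 1$ and $\dim \cC_\sig^\ss \ge \dim \cC_\sig -1$ separately.

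\emph{Upper bound.} Since $\cC_\sig$ is irreducible, it suffices to show that $\cC_\sig^\ss$ is a proper closed substack. For this I only need one point of $\cC_\sig$ on which some $f_i$ does not vanish. The natural candidate is the ordinary locus $\cC_\sig^{\tau_e}(t_{w^\mo(\eta)})$: by Lemma \ref{lem:chart-extremal} applied to $\sig$ in place of $\sig'$, with $\tau=\tau(\pi^\mo(w)^\mo w,\om+\pi^\mo(w)^\mo(\nu_w-\eta))$ and extremal shape, this is a nonempty open substack of $\cC_\sig$. The argument of Proposition \ref{prop:ord-ss}(1), which uses Proposition \ref{prop:chart}(1) and Remark \ref{rmk:functions}, shows that all $f_i$ are nonzero there. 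Alternatively, I can use that $\ov\Psi_\sig$ is an isomorphism, so $f_1=\ov\Psi_\sig(T_1)\neq 0$ in the integral ring $\cO(\cC_\sig)$. Then $V(f_1)$ is a proper closed subset of the irreducible $\cC_\sig$, and hence has codimension at least one.

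\emph{Lower bound.} This is the main step, and here I use Proposition \ref{prop:int} together with Proposition \ref{prop:ord-ss}(2). Since $w^\dia$ is special, I choose $u^\dia$ and $\al$ as in \S\ref{subsec:sp-alc}, and arrange case (a) via Remark \ref{rmk:special}(3). I then consider $Z:=\cC^\tau_\sig(\tilz)\cap\cC^\tau_{\sig'}(\tilz)\cap\cC^{\tau'}_{\sig'}(\tilz')$. By Proposition \ref{prop:int}, $Z$ is nonempty, and by Proposition \ref{prop:ord-ss}(2), $Z\subset\cC_\sig^\ss$. It therefore suffices to show that $\dim Z \ge \dim\cC_\sig-1=\frac{n(n-1)f}{2}-1$.

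To get this dimension bound, I will compute in the local model chart, since the diagram \eqref{eqn:LM} is given by $T^{\vee,\cJ}$-torsors and so dimensions are preserved. By Proposition \ref{prop:chart}(2), $U_\F^{\eta,\nblat}(\tilz)$ is an affine space over $\F[c,Z_{-\al}]/(cZ_{-\al})$ of relative dimension $\frac{n(n-1)f}{2}-1$. Its two components are $V(c)$ and $V(Z_{-\al})$, which are identified with the components for $\sig'$ and $\sig$ respectively (Lemma \ref{lem:V(c)} identifies $V(c)$ with the $\sig'$ Schubert piece). Their intersection $V(c)\cap V(Z_{-\al})$ is therefore an affine space of dimension $\frac{n(n-1)f}{2}-1$, which is irreducible. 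Intersecting with the third chart is an open condition inside $\cC^\tau_{\sig'}(\tilz)$, as noted after Proposition \ref{prop:int}, and Lemma \ref{lem:int} says that this open subset of $V(c)\cap V(Z_{-\al})$ is nonempty. A nonempty open subset of an irreducible affine space of dimension $\frac{n(n-1)f}{2}-1$ has that same dimension, so $\dim Z = \dim \cC_\sig -1$.

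The delicate point I expect is the bookkeeping that the open intersection with $\cC^{\tau'}_{\sig'}(\tilz')$ is taken inside the irreducible $V(c)\cap V(Z_{-\al})$ rather than just inside $V(c)$. This holds because $\cC^{\tau'}_{\sig'}(\tilz')$ is open in $\cC_{\sig'}$. The dimension count also needs to pass correctly from the chart to the stack: both torsor maps have relative dimension $\dim T^{\vee,\cJ}$, so this is formal.
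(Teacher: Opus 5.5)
Your proposal is correct and follows essentially the paper's argument: the nonempty triple intersection of Proposition \ref{prop:int} lies in $\cC_\sig^\ss$ by Proposition \ref{prop:ord-ss}(2), and its dimension is computed in the local model chart. The paper describes it as the zero locus of $Z_{-\al}$ on an $\frac{n(n-1)f}{2}$-dimensional open of $V(c)$, while you describe it as a nonempty open of the affine space $V(c)\cap V(Z_{-\al})$; you also spell out the upper bound $\cC_\sig^\ss\subsetneq\cC_\sig$, which the paper leaves implicit.
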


\begin{proof}
    We follow the notation of Setup \ref{setup}. It suffices to show that the triple intersection in Proposition \ref{prop:ord-ss}(2) is of codimension one. This is because at the level of local models, the non-empty triple intersection in Lemma \ref{lem:int} is the vanishing locus of a function $Z_{-\al}$ on $V(c)\tilw^*(\tau) \cap S_\F^\circ(\tilz')^{\nbl_{\bfa_{\tau'}}}\tilw^*(\tau')$, and the latter is of dimension $n(n-1)f/2$. 
\end{proof}

\begin{rmk}\label{rmk:inftymany}
   \begin{enumerate}
       \item 
       The supersingular locus $\cC_{\sig}^\ss$ is the vanishing locus of $n-1$ functions. Thus, Corollary \ref{cor:ss} shows that $f_1,\dots, f_{n-1} \in \cO(\cC_\sig)$ are far from forming a regular sequence when $w^\dia$ is special. Note that they do form a regular sequence if $w^\dia \in \Omega^\cJ$ (this can be checked using \cite{LLMPQ-FL}).
       \item 
       It is clear from the proof of Proposition \ref{prop:int} that given $t\in \Fpbar^\times$, there are infinitely many $\rhobar_0 \in (\cC_\sig^\tau(\tilz)\cap \cC^{\tau}_{\sig'}(\tilz) \cap \cC_{\sig'}^{\tau'}(\tilz'))(\Fpbar)$ such that $\chi_{\sig'}(\rhobar_0)$ are all distinct and $\chi_{\sig'}(\rhobar_0)(T_n)=t$.
   \end{enumerate} 
\end{rmk}

\section{Global methods}

\subsection{Global setup}\label{sub:global}
In this subsection, we briefly explain a candidate for the mod $p$ local Langlands correspondence given by a global method. We refer \cite[\S10.3]{LLMPQ-FL} for more detail.

Let $\rhobar_0:G_K \ra \GL_n(\F)$ be a continuous representation. We assume that $p\nmid 2n$ and $\rhobar_0$ admits a potentially diagonalizable potentially crystalline lift of regular weight. By \cite[Corollary A.7]{EG-geom_BM-MR3134019}, we can find a \textit{suitable globalization} of $\rhobar_0$ which is a triple $(F,F^+,\rbar)$ of a CM field $F$, its maximal totally real subfield $F^+$, and a continuous representation $\rbar: G_{F^+} \ra \tld{G}(\F)$. Here, $\tld{G}$ is the Clozel--Harris--Taylor group scheme which is a certain semi-direct product of $\GL_n\times \GL_1$ and $\Z/2\Z$. We refer \cite[\S2.1]{6author} for the precise definition. The important point for us is that each place $v \mid p$ in $F^+$ splits in $F$ and $F^+_v \simeq K$ and after choosing such isomorphisms, $\rbar|_{G_{F^+_v}}$ maps into $\GL_n\times \GL_1$ and is isomorphic to $\rhobar_0\times \det(\rhobar_0)$. Moreover, $\rbar$ is potentially diagonalizably automorphic and unramified at places $v \nmid p$.

By solvable base change, we can and do assume that $[F^+:\Q]$ is even. We let $\cG_{/F^+}$ be a reductive group which is an outer form of $\GL_n$ splitting over $F$ such that $\cG(F_{v}^+)\simeq U_n(\R)$ for all $v\mid \infty$. By abuse of notation, we also denote the reductive model of $\cG$ over $\cO_{F^+}[1/N]$ for some $N\in \Z$ coprime to $p$ again by $\cG$. 

From now on, we fix a place $\tld{v} \mid p$ in $F$ and write $v:= \tld{v}|_{F^+}$. Let $S(U, \F)$ be the space of algebraic automorphic forms on $\cG$ of level $U = U^vU_v$ and coefficients $\F$. We have a set $\cP_U$ of finite places $\tld{w}$ of $F$ such that $\tld{w}|_{F^+}$ splits in $F$, $w \nmid pN$, and $U$ is unramified at $\tld{w}|_{F^+}$. For a subset $\cP \subset \cP_U$ of finite complement, let $\T^\cP$ be the abstract Hecke algebra generated by the usual double coset operators at places in $\cP$ with a maximal ideal $\fm_{\rbar}$ determined by $\rbar$. Then fixing a tame level $U^v \subset \cG(\A^{\infty,v}_{F^+})$, we define
\begin{align*}
    \pi(\rbar):= \varinjlim_{U_v\le \cG(\cO_{F_v^+})} S(U^vU_v, \F)[\fm_{\rbar}].
\end{align*}
It is a major open conjecture that $\pi(\rbar)$ is \textit{purely local}, i.e.~that it depends only on $\rhobar_0$.

\begin{rmk}
    We follow Setup \ref{setup}. For application, we apply the above formalism to $\rhobar_0$ in the triple intersection in Proposition \ref{prop:int}. The potentially crystalline deformation ring $R^{\eta,\tau'}_{\rhobar_0}$ is formally smooth (this follows from Proposition \ref{prop:chart}). In particular, potentially crystalline lifts of $\rhobar_0$ of type $(\eta,\tau')$ exist and are potentially diagonalizable.  
\end{rmk}

\subsection{Serre weights of Galois representations}
We keep the notation from the previous subsection.
\begin{defn}
    We define a set of Serre weights of $G(k)$
    \begin{align*}
        W_v(\rbar) := \{ \sig \mid \Hom_{\rmK}(\sig, \pi(\rbar)|_{\rmK}) \neq 0\}.
    \end{align*}
\end{defn}
Serre weight conjectures usually concern a variant of $W_v(\rbar)$ by considering all places $v$ dividing $p$. Since we are interested in purely local applications, $W_v(\rbar)$ is more suitable for us.

It is expected that $W_v(\rbar)$ depends only on $\rhobar_0$. If $\rhobar_0$ is tamely ramified and sufficiently generic, an explicit (conjectural) description of $W_v(\rbar)$ in terms of $\rhobar_0|_{I_K}$ was proposed by Herzig \cite{HerzigDuke}. For general $\rhobar_0$, it is expected that $W_v(\rbar)$ can be described in terms of conjectural Breuil--M\'ezard cycles (see \cite[\S8.4]{EGstack} and \cite[Conjecture 8.1.1]{LLLMlocalmodel}). In generic cases, Breuil--M\'ezard cycles are recently constructed in \cite{FLH} for unramified groups. Using the expected properties of Breuil--M\'ezard cycles (\cite[Remark 8.1.2]{LLLMlocalmodel}), we have the following weak version of Serre weight conjectures.

\begin{conj}
    Let $W^g(\rhobar_0)$ be the set of Serre weights $\sig$ of $G(k)$ such that $\rhobar_0\in \cC_\sig(\F)$. Then
    \begin{align*}
        W^g(\rhobar_0) \subset W_v(\rbar).
    \end{align*}
\end{conj}

We need an even weaker version of the above conjecture.

\begin{prop}\label{prop:SWC}
    We follow Setup \ref{setup}. In addition, we assume that $\sig$ is $3n$-generic. Suppose that $\rhobar_0$ is in the triple intersection in Proposition \ref{prop:int}. Then
    \begin{align*}
         W^g(\rhobar_0) \cap \JH(\osig(\tau)) = W_v(\rbar) \cap \JH(\osig(\tau)).
    \end{align*}
    Note that $W^g(\rhobar_0) \cap \JH(\osig(\tau)) = \{\sig, \sig'\}$. Furthermore, $\sig'$ is a Jordan--H\"older factor of $\osig(\tau)$ with multiplicity one.
\end{prop}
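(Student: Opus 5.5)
We need to prove Proposition \ref{prop:SWC}. The plan is to sandwich $W_v(\rbar)\cap\JH(\osig(\tau))$ between two copies of $W^g(\rhobar_0)\cap\JH(\osig(\tau))$: an upper bound from potentially crystalline lifts of type $(\eta,\tau)$, and a lower bound from modularity of lifts in the two components through $\rhobar_0$.

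\textbf{Step 1: identify the candidate set and the multiplicity.} By Lemma \ref{lem:chart-CLone}, $\cX^{\eta,\tau}_{n,\F}(\tilz)$ has exactly two irreducible components, open dense in $\cC_\sig$ and $\cC_{\sig'}$. Since $\rhobar_0$ lies in both, we get $\rhobar_0\in\cX^{\eta,\tau}_n(\tilz)$. The open chart $\cX^{\eta,\tau}_n(\tilz)$ meets no other component, and the $3n$-genericity gives $(5n-1)$-genericity of $\tau$ up to rechecking constants. So \cite[Theorem 7.4.2]{LLLMlocalmodel} shows that the only $\kappa\in\JH(\osig(\tau))$ with $\rhobar_0\in\cC_\kappa$ are $\sig$ and $\sig'$, which gives $W^g(\rhobar_0)\cap\JH(\osig(\tau))=\{\sig,\sig'\}$. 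Multiplicity one of $\sig'$ will come from the explicit description of $\JH(\osig(\tau))$ in \cite[Theorem 1.1]{LLLM-DL}. Here $\sig(\tau)\simeq\Ind_{B(k)}^{G(k)}[\tld\lam]$, the constituents are labelled by $\Adm^{\reg}(\eta)$-type data, and $\sig'$ corresponds to the single element $\tilz$-shape $w^\mo t_\eta u$. Alternatively, one can read it off from the fact that $\cC_{\sig'}$ occurs with multiplicity one in the special fibre, since $U^{\eta,\nblat}(\tilz)_\F=\Spec\F[c,Z_{-\al}]/(cZ_{-\al})\times\A^{N}$ is reduced (Proposition \ref{prop:chart}), combined with Breuil--M\'ezard-type cycle compatibility.

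\textbf{Step 2: upper bound.} Suppose $\kappa\in W_v(\rbar)\cap\JH(\osig(\tau))$. Since $\kappa\subset\osig(\tau)$ up to lattice choice, $\Hom_\rmK(\osig(\tau)^\circ,\pi(\rbar))\ne0$ for a suitable lattice. Lifting to characteristic zero by the standard argument (Nakayama on the patched or finite-level space $S(U,\cO)$ with type $\sig(\tau)$) then yields an automorphic lift of $\rbar$ that is potentially crystalline of type $(\eta,\tau)$ at $v$. Thus $\rhobar_0$ lies on the support of the patched module $M_\infty(\sig(\tau)^\circ)$ over $R^{\eta,\tau}_{\rhobar_0}$. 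To pin down which constituent $\kappa$ can be, I would use the Taylor--Wiles patching of \cite{LLLMlocalmodel}/\cite{LLMPQ-FL}. The support cycle of $M_\infty(\kappa)$ is contained in $\cC_\kappa$, since it is a subcycle of the special fibre, the latter is $\cC_\sig+\cC_{\sig'}$ near $\rhobar_0$, and genericity gives cycle-to-weight matching. So nonvanishing at $\rhobar_0$ forces $\rhobar_0\in\cC_\kappa$, i.e.\ $\kappa\in\{\sig,\sig'\}$.

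\textbf{Step 3: lower bound.} This is the main obstacle. We must show $\sig,\sig'\in W_v(\rbar)$. I would use that $R^{\eta,\tau}_{\rhobar_0}$ is a domain after inverting $p$, namely a completion of $\cO[c,Z_{-\al}]/(cZ_{-\al}-p)$ times a power series ring. Potential diagonalizability of $\rbar$ plus the Taylor--Wiles patching gives that $M_\infty(\sig(\tau)^\circ)$ has full support on $\Spec R^{\eta,\tau}_{\rhobar_0}$, hence its special fibre cycle is $\cC_\sig+\cC_{\sig'}$ with multiplicity at least one. Writing $\osig(\tau)$ as an extension of its constituents and using exactness of $M_\infty$, the cycle is $\sum_\kappa Z(M_\infty(\kappa))$. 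Each $\kappa\ne\sig,\sig'$ contributes zero by Step 2, so $M_\infty(\sig)$ and $M_\infty(\sig')$ are both nonzero, and after unpatching so are $\Hom_\rmK(\sig,\pi(\rbar))$ and $\Hom_\rmK(\sig',\pi(\rbar))$. The delicate points are the modularity input (automorphy of a lift of type $\tau$) and the use of multiplicity one of $\sig'$, which ensures its contribution cannot cancel or hide in a lattice ambiguity. I would handle the latter by choosing the lattice with cosocle $\sig'$.
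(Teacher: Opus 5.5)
\textbf{There is a genuine gap.} In both Step 2 and Step 3 your argument rests on the claim that the support of $M_\infty(\kappa)$ at $\rhobar_0$ lies in $\cC_\kappa$ (your ``cycle-to-weight matching''). That is essentially the patched Breuil--M\'ezard / geometric Serre weight statement at $\rhobar_0$, and nothing available at this genericity gives it. Knowing that $Z(M_\infty(\kappa))$ is a subcycle of the special fibre of $R^{\eta,\tau}_{\rhobar_0}$ only places it on $\cC_\sig\cup\cC_{\sig'}$, not on $\cC_\kappa$.

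Without the matching, Step 2 does not exclude a third constituent $\kappa\in\JH(\osig(\tau))$ whose patched module lives on one of the two components. This is a real danger, not a formality: $\JH(\osig(\tau))\cap W^?(\rhobar')$ is strictly bigger than $\{\sig,\sig'\}$, so weight elimination alone cannot rule it out. Step 3 fails for the same reason: it cannot conclude that both $\sig$ and $\sig'$ contribute, rather than a single weight covering both components.

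Two further inputs are unjustified.
\begin{itemize}
\item A $3n$-deep $\sig$ only makes $\tau$ about $(2n+1)$-generic, not $(5n-1)$-generic, so \cite[Theorem 7.4.2]{LLLMlocalmodel} is not available.
\item Nonvanishing of $M_\infty(\sig^\circ(\tau))$ needs the type-$(\eta,\tau)$ lifts of $\rhobar_0$ to be potentially diagonalizable, and you never establish this.
\end{itemize}
The shape $w^\mo t_\eta u$ is also the relative position $\tilw(\rhobar,\tau)$, not a label of $\sig'$ as a constituent, so it says nothing about multiplicities.

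The paper replaces matching by counting, using auxiliary types.
\begin{enumerate}
\item Weight elimination \cite[Theorem 6.1]{LLLM-DL} gives $W_v(\rbar)\subset W^?(\rhobar')$, where $\rhobar'=\rhobar_0^{\ss}|_{I_K}$.
\item For the extremal type $\tau'$, the ring $R^{\eta,\tau'}_{\rhobar_0}$ is formally smooth. So its lifts are potentially diagonalizable and $M_\infty(\osig(\tau'))\neq 0$. Since $\JH(\osig(\tau'))\cap W^?(\rhobar')=\{\sig'\}$, this gives $\sig'\in W_v(\rbar)$, with $M_\infty(\sig')$ supported on the irreducible $\Spec R^{\eta,\tau'}_{\rhobar_0}/\varpi$.
\item Take an auxiliary type $\tau''$ with $\tilw(\rhobar',\tau'')=w^\mo t_\eta u$. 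Then $\JH(\osig(\tau''))\cap W^?(\rhobar')=\{\sig,\sig'\}$, and $R^{\eta,\tau''}_{\rhobar_0}$ is formally smooth over $\cO\DB{x,y}/(xy-p)$. Its special fibre has two components, and $M_\infty(\sig')$ can cover only one, which forces $M_\infty(\sig)\neq 0$.
\item For $\tau$ itself, $M_\infty(\sig^\circ(\tau))$ is nonzero because it contains $\sig'$. It is generically free of rank one over a ring of the same shape, so the cycle of $M_\infty(\osig(\tau))$ is the reduced two-component cycle of $R^{\eta,\tau}_{\rhobar_0}/\varpi$.
\item Since $\sig$ and $\sig'$ each contribute a nonzero effective cycle, every other constituent of $\osig(\tau)$ contributes zero. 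Reducedness of that cycle also gives multiplicity one for $\sig'$.
\end{enumerate}
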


\begin{proof}
    This follows from the standard Taylor--Wiles patching argument. We give a brief sketch. Let $M_\infty$ be a weak minimal potentially diagonalizable patching functor for $\rhobar_0$ \cite[Proposition 6.2.4(2)]{LLLMlocalmodel}. 
    
    Note that $\rhobar_0^\ss|_{I_K} = \rhobar'$. Since $\sig$ is $3n$-generic, $\rhobar'$ is $(2n+1)$-generic. Then \cite[Theorem 6.1]{LLLM-DL} shows that $W_v(\rbar)\subset W^?(\rhobar')$. Here, $W^?(\rhobar')$ is the explicit set of Serre weights attached to (tamely ramified) $\rhobar'$ by Gee--Herzig--Savitt \cite{GHS-JEMS-2018MR3871496}.  This implies that for a Serre weight $V$, $M_\infty(V)\neq 0$ only if $V\in W^?(\rhobar')$. Since $\rhobar_0\in \cC_{\sig'}^{\tau'}(\tilz')$ and $\tilz'$ is extremal, the potentially crystalline deformation ring $R^{\eta,\tau'}_{\rhobar_0}$ is formally smooth. This implies that potentially crystalline lifts of $\rhobar_0$ of type $(\eta,\tau')$ are potentially diagonalizable. Then $M_\infty(\osig(\tau'))\neq 0$. Then $\sig' \in W_v(\rbar)$ follows from $\JH(\sig(\tau'))\cap W^?(\rhobar')=\{\sig'\}$. 
    
    To show that $\sig \in W_v(\rbar)$, we choose a tame inertial type $\tau''$ such that $\tilw(\rhobar',\tau'')=w^\mo t_\eta u$. Then $\JH(\osig(\tau''))\cap W^?(\rhobar') = \{\sig,\sig'\}$, and the deformation ring $R^{\eta,\tau''}_{\rhobar_0}$ is a power series ring over $\cO\DB{x,y}/(xy-p)$ by \cite[Theorem 5.2.3]{LLMPQ-CL}. Since $R^{\eta,\tau''}_{\rhobar_0}$ is normal and $M_\infty(\sig)$ is non-zero,  $M_\infty(\sig^\circ(\tau''))$ is a non-zero module supported on $\Spec R^{\eta,\tau''}_{\rhobar_0}$ (it is a module over $R_\infty^{\eta,\tau''}$ in \loccit, but we can simply view it as $R^{\eta,\tau''}_{\rhobar_0}$-module as $R_\infty^{\eta,\tau''}$ is an $R^{\eta,\tau''}_{\rhobar_0}$-algebra). Since $M_\infty(\osig(\tau''))$ is an extension of $M_\infty(\sig)$ and $M_\infty(\sig')$, and the support of $M_\infty(\osig(\tau''))$ (which is $\Spec R^{\eta,\tau''}_{\rhobar_0}/\varpi$) is strictly larger than the support of $M_\infty(\sig)$ (which is irreducible), $M_\infty(\sig')$ is also non-zero. In other words, $\sig \in W_v(\rbar)$. This also shows that  $W_v(\rbar) \cap \JH(\osig(\tau'')) = \{\sig, \sig'\}$, which implies $\{\sig, \sig'\} \subset W_v(\rbar) \cap \JH(\osig(\tau))$. 
    
    The claimed equality is not immediate because $\JH(\osig(\tau))\cap W^?(\rhobar')$ strictly contains $\{\sig,\sig'\}$. Still, $M_\infty(\sig^\circ(\tau''))$ and $M_\infty(\sig^\circ(\tau))$ are generically free of rank one over abstractly isomorphic base rings $R^{\eta,\tau''}_{\rhobar_0}$ and $R^{\eta,\tau}_{\rhobar_0}$, respectively. By \cite[Lemma 2.2.10]{EG-geom_BM-MR3134019}, the cycles attached to $M_\infty(\osig(\tau''))$ and $M_\infty(\osig(\tau))$ are given by the cycle attached to the special fiber of their base rings. In particular, they can be identified under the isomorphism between base rings. (In fact, the special fiber of the two base rings are \textit{identical} as quotients of $R_{\rhobar_0}^{\square}$. We do not need this here.) This proves that $\{\sig, \sig'\} =W_v(\rbar) \cap \JH(\osig(\tau))$. Finally, $\sig'$ is a Jordan--H\"older factor of $\osig(\tau)$ with multiplicity one because $R^{\eta,\tau}_{\rhobar_0}/\varpi$ is reduced and thus its cycle is multiplicity free.
\end{proof} 

\begin{cor}\label{cor:factorization}
    We keep the notation and assumptions in the previous Proposition. We assume that $w^\dia$ in Setup \ref{setup} is special. Then there is a non-zero morphism $\cind_\KZ^\rmG \sig \ra \pi(\rbar)$ (resp.~$\cind_\KZ^\rmG \sig' \ra \pi(\rbar)$). Moreover, any such non-zero morphism factors through $\SS(\sig)$ (resp.~$\cind_\KZ^\rmG \sig'\otimes_{\cH(\sig')}\chi_{\sig'}(\rhobar_0)$).
\end{cor}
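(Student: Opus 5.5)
The plan is to combine Proposition \ref{prop:SWC} with Frobenius reciprocity, and then use the mod $p$ local-global compatibility of \cite{lee-satake} for the Hecke action.

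\emph{Existence.} By Proposition \ref{prop:SWC}, $\sig,\sig'\in W_v(\rbar)$, so $\Hom_\rmK(\sig,\pi(\rbar)|_\rmK)\neq 0$, and likewise for $\sig'$. Since $\pi(\rbar)$ is a space of automorphic forms with the centre acting through a character (after fixing the central level, or restricting to an eigenspace of the generic fibre of $\rmZ$), any $\rmK$-map $\sig\to\pi(\rbar)$ extends to $\KZ$, with $\rmZ$ acting compatibly with the central character of $\sig$ determined by $\det\rhobar_0$. Frobenius reciprocity then gives non-zero $\rmG$-maps $\cind_\KZ^\rmG\sig\to\pi(\rbar)$ and $\cind_\KZ^\rmG\sig'\to\pi(\rbar)$.

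\emph{Factorization.} A morphism $\cind_\KZ^\rmG\sig\to\pi(\rbar)$ corresponds to an element of $\Hom_\rmK(\sig,\pi(\rbar))$. That space is a right $\cH(\sig)$-module via precomposition, and the map factors through $\cind\sig\otimes_{\cH(\sig)}\chi$ exactly when it lies in the $\chi$-eigenspace. I will invoke the local-global compatibility theorem of \cite{lee-satake}: for $2n$-deep $\sig\in W_v(\rbar)$, $\cH(\sig)$ acts on $\Hom_\rmK(\sig,\pi(\rbar))$ through $\chi_\sig(\rhobar_0)=\ev_{\rhobar_0}\circ\ov{\Psi}_\sig$. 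This is the key input; its proof compares the unramified Hecke action at $v$ with the Satake isomorphism via patching. For $\sig'$ this gives the factorization through $\cind\sig'\otimes_{\cH(\sig')}\chi_{\sig'}(\rhobar_0)$ directly. For $\sig$, Proposition \ref{prop:ord-ss}(2) applies because $w^\dia$ is special: $\rhobar_0$ lies in $\cC_\sig^\ss$, so $\chi_\sig(\rhobar_0)(T_i)=f_i(\rhobar_0)=0$ for $1\le i\le n-1$. Hence each $T_i$ kills the image of the morphism, and it factors through $\cind_\KZ^\rmG\sig/(T_1,\dots,T_{n-1})=\SS(\sig)$.

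\emph{Main obstacle.} The only delicate point is the eigenvalue claim. In \cite{lee-satake} the Hecke operators act on $\Hom_\rmK(\sig,\pi(\rbar))$ only through the patched module, so the statement may be that the action factors through the reduced quotient; a priori it is not scalar. If only a generalized-eigenspace statement is available, I will pass to the genuine eigenvectors. These are non-zero because $\cH(\sig)$ is commutative and the space is finite-dimensional by admissibility of $\pi(\rbar)$. The existence statement then still holds, and "any such morphism" should be read for morphisms from the eigenspace. I expect the paper's version of \cite{lee-satake} to give the scalar action directly, making this step a citation. I also need to check that the normalizations of $T_n$ and the central character agree; this should follow from $\det\rhobar_0$ and Remark \ref{rmk:functions}.
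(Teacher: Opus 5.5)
Your proposal is correct and follows the paper's proof. Existence comes from Proposition \ref{prop:SWC} plus Frobenius reciprocity, and the factorization comes from the mod $p$ local-global compatibility of \cite[Theorem 7.1.1]{lee-satake}. The vanishing of $T_1,\dots,T_{n-1}$ for $\sig$ comes from Proposition \ref{prop:ord-ss}(2), which is where specialness enters; the paper leaves this implicit and you state it. The paper takes the cited theorem to give the scalar action of $\cH(\sig)$ on all of $\Hom_\rmK(\sig,\pi(\rbar)|_\rmK)$, so your generalized-eigenspace fallback, which would weaken ``any such morphism'', is not needed.
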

\begin{proof}
    The first claim follows from Proposition \ref{prop:SWC} and Frobenius reciprocity. 
    The second claim follows from the mod $p$ local-global compatibility result for $\ov{\Psi}_{\sig}$ (and $\ov{\Psi}_{\sig'}$) \cite[Theorem 7.1.1]{lee-satake}. 
\end{proof}

\begin{rmk}\label{rmk:PS}
    The representation $\cind_\KZ^\rmG \sig'\otimes_{\cH(\sig')}\chi_{\sig'}(\rhobar_0)$ in Corollary \ref{cor:factorization} is an irreducible principal series. More precisely, 
    \begin{align*}
        \cind_\KZ^\rmG \sig'\otimes_{\cH(\sig')}\chi_{\sig'}(\rhobar_0) \simeq \Ind_{\ov{B}(K)}^{\rmG} \chi
    \end{align*}
    by \cite[Theorem 3.1]{HerzigDuke}. Here, $\chi=\otimes_{i=1}^n\chi_i$ is a $T(K)$-character over $\F$ such that $\chi|_{T(\cO_K)}$ is equal to the highest weight of $\sig'$ and $\chi_i(p) = \chi_{\sig'}(\rhobar_0)(T_i) / \chi_{\sig'}(\rhobar_0)(T_{i-1})$ for $i=1,2,\dots,n$ (interpret $T_0=1$). The representation $\Ind_{\ov{B}(K)}^{\rmG} \chi$ is irreducible by \cite{Oll-PS}. Distinct choices of $\chi_{\sig'}(\rhobar_0)$ give non-isomorphic principal series (see \cite[Theorem 1.1]{HerzigDuke}).
\end{rmk}

\section{The main result}
Let $\sig$ be a Serre weight of $\GL_n(k)$. For $t\in \F^\times$, we define the universal supersingular representation (with fixed central character determined by $t$)
\begin{align*}
    \SS_{t}(\sig):= \cind_\rmK^\rmG \sig/(T_1,\dots, T_{n-1}, T_n-t).
\end{align*}

\begin{thm}\label{thm:main}
    Let $\sig$ be a Serre weight with a $3n$-deep lowest alcove presentation $(w^\dia,\om)$. Suppose that $w^\dia$ is special. Then $\SS_{t}(\sig)$ is non-admissible and of infinite length for any $t\in \F^\times$.
\end{thm}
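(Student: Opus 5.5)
We follow the strategy sketched in the introduction: find a Jordan--H\"older factor $\sig'$ of $C_i(\sig)$ (for one fixed $i$, say $i=1$) such that the subquotient of $\SS_t(\sig)$ generated by the image of $\cind_\rmK^\rmG \sig'$ under $\cyc_i$ has infinitely many pairwise non-isomorphic irreducible quotients. Lemma \ref{lem:non-adm} then shows this subquotient is non-admissible, and Lemma \ref{lem:subquot} transfers non-admissibility to $\SS_t(\sig)$. The same infinitude of distinct irreducible subquotients gives infinite length.

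\textbf{Choice of weights and Galois representations.} Since $w^\dia$ is special, we fix $u^\dia$ and $\al$ as in \S\ref{subsec:sp-alc}, using Remark \ref{rmk:special}(3) to normalize to case (a), and set $\sig'=F_{(u^\dia,\om)}$ as in Setup \ref{setup}; the $3n$-depth implies $(3n-4)$-depth. By Proposition \ref{prop:int} and Remark \ref{rmk:inftymany}(2), there are infinitely many $\rhobar_0$ in the triple intersection with pairwise distinct characters $\chi_{\sig'}(\rhobar_0)$ and $\chi_{\sig'}(\rhobar_0)(T_n)=t'$, where $t'$ is prescribed. We choose $t'$ so that the central character matches $t$; this is possible after twisting by an unramified character, since $T_n$ for $\sig$ and for $\sig'$ both act as $\mathrm{diag}(p,\dots,p)$ and differ only by a fixed normalization. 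For each such $\rhobar_0$ we globalize as in \S\ref{sub:global} to get $\pi(\rbar)$. By Proposition \ref{prop:ord-ss}(2), $\rhobar_0\in \cC^\ss_\sig$; by Corollary \ref{cor:factorization}, a nonzero map $\cind\sig\to\pi(\rbar)$ therefore factors through $\SS_t(\sig)$, with $T_n$ acting by $t$ after the chosen normalization.

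\textbf{Weight cycling step.} Compose $\cind_\rmK^\rmG C_i(\sig)\xrightarrow{\cyc_i}\SS_t(\sig)\to\pi(\rbar)$. This is nonzero because $\SS_t(\sig)\to\pi(\rbar)$ is nonzero and $\cyc_i$ is surjective onto $\cind\sig/T_i$, which surjects onto $\SS_t(\sig)$. By Frobenius reciprocity we obtain a nonzero $\rmK$-map $C_i(\sig)\to\pi(\rbar)|_\rmK$, so its image has some socle constituent in $W_v(\rbar)\cap\JH(C_i(\sig))$. By Lemma \ref{lem:upperbound} and the identity $\JH(\Ind_{\ov B(k)}\tld\lam)=\JH(\osig(\tau))$, we have $\JH(C_i(\sig))\subset\JH(\osig(\tau))\setminus\{\sig\}$. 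Proposition \ref{prop:SWC} then forces this constituent to be $\sig'$, which in particular shows $\sig'\in\JH(C_i(\sig))$ (with multiplicity one). Since this holds for every $\rhobar_0$ in the family and $C_i(\sig)$ does not depend on $\rhobar_0$, we consider a fixed subquotient $Q$ of $C_i(\sig)$ with cosocle containing $\sig'$. Concretely, let $\Sigma\subset C_i(\sig)$ be the smallest submodule whose image is nonzero, and take $Q$ as a quotient of $\Sigma$ isomorphic to $\sig'$ with kernel $\Sigma_0$ mapping to zero. Set $\pi'=\cyc_i(\cind\Sigma)/\cyc_i(\cind\Sigma_0)$ in $\SS_t(\sig)$. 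This is a quotient of $\cind\sig'$, and it maps nonzero to $\pi(\rbar)$ for each $\rhobar_0$, possibly after refining $\Sigma$ using multiplicity one of $\sig'$ so that the choice is uniform in $\rhobar_0$.

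\textbf{Conclusion and main obstacle.} Each map $\pi'\to\pi(\rbar)$ comes from a nonzero map $\cind\sig'\to\pi(\rbar)$. By Corollary \ref{cor:factorization} it factors through $\cind\sig'\otimes_{\cH(\sig')}\chi_{\sig'}(\rhobar_0)$, which by Remark \ref{rmk:PS} is an irreducible principal series; hence the image is this principal series. These principal series are pairwise non-isomorphic for distinct $\chi_{\sig'}(\rhobar_0)$, giving infinitely many irreducible quotients of $\pi'$. This yields non-admissibility and infinite length.

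The delicate point is the uniformity: making $\Sigma_0$ independent of $\rhobar_0$, so that the maps genuinely factor through the same subquotient $\pi'$. We plan to handle this by using that $\sig'$ has multiplicity one in $C_i(\sig)$ and that every other constituent of $C_i(\sig)$ lies outside $W_v(\rbar)$ for all $\rhobar_0$ in the family. Then any $\rmK$-map $C_i(\sig)\to\pi(\rbar)$ kills the kernel of the canonical projection from the minimal submodule containing $\sig'$, and this kernel is independent of $\rhobar_0$.
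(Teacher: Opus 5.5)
Your proposal is correct and is essentially the paper's proof. Your uniform choice in the last paragraph is exactly the paper's ``smallest $V$ with $\sig'\subset C_i(\sig)/V$'': $\Sigma$ is the smallest subrepresentation of $C_i(\sig)$ having $\sig'$ as a constituent, and $\Sigma_0$ is the kernel of $\Sigma\epi\sig'$. Your reason that every $\rmK$-map $C_i(\sig)\ra\pi(\rbar)|_\rmK$ kills $\Sigma_0$ is the right one, since its constituents avoid $W_v(\rbar)$ by Lemma \ref{lem:upperbound} and Proposition \ref{prop:SWC}; your provisional $\Sigma$ in the second step (``smallest submodule whose image is nonzero'') depends on $\rhobar_0$ and need not be unique, so use the canonical choice from the outset.
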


\begin{proof}
    We follow Setup \ref{setup} and choose $\rhobar_0$ in the triple intersection in Proposition \ref{prop:int}. Choose a suitable globalization $(F,F^+,\rbar)$ as in \S\ref{sub:global}.
    
    Fix $1 \le i \le n-1$. Then we have a commutative diagram
    \[
    \begin{tikzcd}
        \cind_{\rmK}^{\rmG} C_{i}(\sig) \arrow[r, two heads, "\cyc_i"] \arrow[rd, dashed] & \SS_t(\sig) \arrow[d] \\
        & \pi(\rbar).
    \end{tikzcd}
    \]
    where the horizontal surjection is induced by Proposition \ref{prop:WC} and the (non-zero) vertical arrow is induced by Corollary \ref{cor:factorization}. The diagonal arrow induces a morphism
    \begin{align*}
       f: C_i(\sig) \ra \pi(\rbar)|_{\rmK}
    \end{align*}
    which is non-zero because $\cyc_i$ is surjective. Recall from Lemma \ref{lem:upperbound} that $\sig\notin \JH(C_i(\sig))$ and $\JH(C_i(\sig))\subset \JH(\osig(\tau))$. Then Proposition \ref{prop:SWC} implies that the socle of the image of $f$ isomorphic to $\sig'$. This in particular implies that $\sig'$ is a Jordan--H\"older factor of $C_i(\sig)$ with multiplicity one.

    Let $V\subset C_i(\sig)$ be the smallest subrepresentation such that $\sig'\subset C_i(\sig)/V$. Let $\pi$ be the image of $\cind_\rmK^\rmG \sig'$ in $\SS_t(\sig)/\cyc_i(\cind_{\rmK}^{\rmG} V)$. Note that $\pi$ is a subquotient of $\SS_t(\sig)$ independent of the choice of $\rhobar_0$ and $\rbar$, and the map $\cind_\rmK^\rmG \sig' \ra \pi(\rbar)$ factors through $\pi$. We claim that $\pi$ is non-admissible. This proves the theorem by Lemma \ref{lem:subquot}.
    
    By Corollary \ref{cor:factorization}, the map $\cind_{\rmK}^\rmG \sig' \ra \pi(\rbar)$ induced by $f$ factors through the irreducible quotient $\cind_{\rmK}^\rmG \sig' \otimes_{\cH(\sig')}\chi_{\sig'}(\rhobar_0)$. Thus, $\pi$ admits $\cind_{\rmK}^\rmG \sig' \otimes_{\cH(\sig')}\chi_{\sig'}(\rhobar_0)$ as a quotient. Since there are infinitely many choices of $\rhobar_0$ with fixed determinant and distinct $\chi_{\sig'}(\rhobar_0)$ as in Remark \ref{rmk:inftymany}, $\cind_{\rmK}^\rmG \sig' \otimes_{\cH(\sig')}\chi_{\sig'}(\rhobar_0)$ gives rise to infinitely many non-isomorphic principal series (see Remark \ref{rmk:PS}). 
    This shows that $\pi$ is of infinite length and non-admissible by Lemma \ref{lem:non-adm}.
    \end{proof}

\begin{rmk}\label{rmk:non-special}
    We explain why Theorem \ref{thm:main} for non-special $w^\dia$ is more difficult. Let $n=3$ and $f=1$. In this case, we have two $p$-restricted alcoves, denoted by $C_0$ (the base $p$-alcove) and $C_1$, and only $C_1$ is special.

    Let $\sig=F(\lam)$ with $\lam=(a,b,c)\in C_0$ sufficiently deep. In this case, we have an explicit description of $C_i(\sig)$. For $i=1$, $C_i(\sig)$ is a non-split extension of two Serre weights with socle $\sig':=F(a,c,b-p+1)$ and cosocle $\sig'':=F(c+p-1,a,b)$. For $i=2$, it has the same form with socle $\sig':=F(b+p-1,a,c)$ and cosocle $\sig'':= F(b,c,a-p+1)$. In each case, we can check that $\cC_{\sig} \cap \cC_{\sig''}$ is empty, and although $\cC_{\sig} \cap \cC_{\sig'}$ is non-empty, the intersection $\cC_\sig^{\ss}\cap (\cC_{\sig'} \backslash \cC_{\sig'}^\ss)$ is empty. This is an obstruction to applying our weight cycling argument to the non-admissibility of $\SS_t(\sig)$.

    Roughly speaking, it is expected that $\cC_\sig^\ss$ is more complicated for higher (with respect to $\uparrow$-ordering) $\sig$. When the highest weight of $\sig$ belongs to the base $p$-alcove, $\cC_{\sig}^\ss$ is too simple to contain suitable Galois representations for our weight cycling argument. If the highest weight of $\sig$ does not belong to the base $p$-alcove, we expect that $\cC_{\sig}^\ss$ always contains Galois representations suitable for our argument, but finding such representations would require studying more complicated neighborhoods of potentially crystalline stacks or finding a better description of $C_i(\sig)$.
\end{rmk}

\begin{rmk}\label{rmk:GL2}
    We briefly explain how to prove the non-admissibility of $\SS_t(\sig)$ for $n=2$ and $f\ge 2$ using our strategy. For simplicity, let $f=2$ and $\sig=F(\lam)$ be a sufficiently deep Serre weight. We write $W=\{e,s\}$ and $s_j = \sig_j(s)\in W^{\cJ}$ for $j=0,1$. For $\tau=\tau(e,\lam)$ and $\sig(\tau) = \Ind_{B(k)}^{G(k)}\sig^{U(k)}$, we have
    \begin{align*}
        \JH(\osig(\tau)) = \{F(\lam), F((s_0\cdot \lam)^\dia), F((s_1\cdot \lam)^\dia), F((s_0s_1\lam)^\dia)\}
    \end{align*}
    and $C_1(\sig)$ has three irreducible constituents with socle $\oplus_{j=0,1} F((s_j\cdot \lam)^\dia)$ and cosocle $F((s_0s_1\lam)^\dia)$. Here, for $\mu\in X^*(T)^\cJ$ sufficiently deep in its alcove, $\mu^\dia$ denotes the unique $p$-restricted character (up to $(p-\pi)X^0(T)^\cJ$) in $\mu+(p-\pi)X^*(T)^\cJ$.  More concretely, if we write $\lam=(\lam_0, \lam_1)$ with $\lam_i=(a_i,b_i)\in X_1^*(T)$, then
    \begin{itemize}
        \item $s_0\cdot \lam = ((b_0-1, a_0+1),(a_1,b_1))$ and $(s_0\cdot \lam)^\dia = ((b_0-1+p, a_0+1),(a_1-1,b_1))$;
        \item $s_1\cdot \lam = ((a_0, b_0),(b_1-1,a_1+1))$ and $(s_1\cdot \lam)^\dia = ((a_0-1, b_0),(b_1-1+p,a_1+1))$;
        \item $s_0s_1\lam = ((b_0, a_0),(b_1,a_1))$ and $(s_0s_1\lam)^\dia = ((b_0+p-1, a_0),(b_1+p-1,a_1))$.
    \end{itemize}

    For most Serre weights $\sig'$ of $\GL_2(k)$, it is known that $\cC_{\sig'}$ is smoothly equivalent to $(\P^1)^2$, and the ordinary locus $\cC_{\sig'}^{\ord}$ can be identified with $(\A^1)^2\subset (\P^1)^2$ (see \cite{GKKSW}). We use this to visualize $\cX^{\eta,\tau}_{2,\F}$ as in Figure \ref{fig:stack}.  For each $\sig'\in \JH(\osig(\tau))$, $\cC_{\sig'}$ is depicted as a diamond shape labeled by $\sig'$ in the middle. Its edges (resp.~vertices) should be interpreted as copies of $\P^1$ (resp.~$0$ and $\infty$ in $\P^1$). The gray area corresponds to $\cC_{\sig'}^\ord$. The supersingular locus $\cC_{\sig}^\ss$ is (smoothly equivalent to) two copies of $\P^1$ intersecting at one point, identified with $0\in \P^1$. This is illustrated by the two red edges. Its intersections with $\cC_{\sig'}^{\ord}$ for $\sig' = F((s_j\cdot \lam)^\dia)$ for $j=0,1$ correspond to two one-dimensional families (smoothly equivalent to $\A^1 = \P^1 \backslash \{\infty\}$) of Galois representations of the form 
    \begin{align*}
        \pma{\chi_0 & * \\ 0  & \chi_1} \ \ \text{ and} \ \ \  \pma{\chi_1 & * \\ 0 & \chi_0}
    \end{align*}
    respectively, where $\chi_0|_{I_K} = \oom_{K,\sig_0}^{-(a_0+1) - pb_1}$ and $\chi_1|_{I_K} = \oom_{K,\sig_0}^{-b_0 - p(a_1+1)}$. If we take a non-split $\rhobar_0$ in the first family and run the argument as in the proof of Theorem \ref{thm:main}, the map $C_1(\sig) \ra \pi(\rbar)|_{\rmK}$ has the kernel given by $F((s_1\cdot \lam)^\dia)$, and the image of $\cind_\rmK^\rmG C_1(\sig)$ in $\pi(\rbar)$ is an extension of a supersingular representation of weight $F((s_0s_1\lam)^\dia)$ by a principal series of weight $F((s_0\cdot \lam)^\dia)$ (cf.~Example 1 in \cite[\S2.4.3]{BHHMS2}). For $\rhobar_0$ in the second family, we have the same result but with $F((s_0\cdot \lam)^\dia)$ and $F((s_1\cdot \lam)^\dia)$ interchanged. This argument also shows that the map $C_1(\sig) \ra \cind_\rmK^\rmG \sig/(T_1)|_\rmK$ induced by $\cyc_1$ is injective.
    \begin{figure}
        \centering
        \begin{tikzpicture}[scale=0.9]

\coordinate (L)  at (-5,0);
\coordinate (T)  at (0,2);
\coordinate (R)  at (5,0);
\coordinate (B)  at (0,-2);

\coordinate (LT) at (-2.5,1);
\coordinate (TR) at ( 2.5,1);
\coordinate (RB) at ( 2.5,-1);
\coordinate (BL) at (-2.5,-1);

\coordinate (O) at (0,0);

\def\s{0.9}

\fill[gray!55]
  (L)
  -- ($(L)!\s!(LT)$)
  -- ($(L)!\s!(O)$)
  -- ($(L)!\s!(BL)$)
  -- cycle;

\fill[gray!55]
  (O)
  -- ($(O)!\s!(LT)$)
  -- ($(O)!\s!(T)$)
  -- ($(O)!\s!(TR)$)
  -- cycle;

\fill[gray!55]
  (R)
  -- ($(R)!\s!(TR)$)
  -- ($(R)!\s!(O)$)
  -- ($(R)!\s!(RB)$)
  -- cycle;

\fill[gray!55]
  (O)
  -- ($(O)!\s!(BL)$)
  -- ($(O)!\s!(B)$)
  -- ($(O)!\s!(RB)$)
  -- cycle;

\draw[thick] (L) -- (T) -- (R) -- (B) -- cycle;

\draw[thick] (LT) -- (RB);
\draw[thick] (BL) -- (TR);

\draw[red, very thick] (LT) -- (O) -- (BL);

\node[fill=white, inner sep=1pt] at (-2.5,0) {$F(\lam)$};
\node[fill=white, inner sep=1pt] at (0,1) {$F((s_0\cdot \lam)^{\diamond})$};
\node[fill=white, inner sep=1pt] at (2.5,0) {$F((s_0s_1 \lam)^{\diamond})$};
\node[fill=white, inner sep=1pt] at (0,-1) {$F((s_1\cdot \lam)^{\diamond})$};

\end{tikzpicture}
        \caption{Visualization of $\cX^{\eta,\tau}_{2,\F}$}
        \label{fig:stack}
    \end{figure}
\end{rmk}

\bibliographystyle{amsalpha}
\bibliography{mybib}
\end{document}